\documentclass[12pt, reqno]{amsart}

\usepackage{amsmath}
\allowdisplaybreaks
\usepackage{amsfonts}
\usepackage{amssymb}
\usepackage[all]{xy}           

\usepackage{bbding}
\usepackage{txfonts}
\usepackage{amscd}

\usepackage[shortlabels]{enumitem}
\usepackage{ifpdf}
\ifpdf
  \usepackage[colorlinks,final,backref=page,hyperindex]{hyperref}
\else
  \usepackage[colorlinks,final,backref=page,hyperindex,hypertex]{hyperref}
\fi
\usepackage{tikz}
\usepackage[active]{srcltx}

\makeatletter

\newtheorem{df}{Definition}[section]
\newtheorem{thm}{Theorem}[section]
\newtheorem{cor}{Corollary}[section]
\newtheorem{rem}{Remark}[section]

\newtheorem{prop}{Proposition}[section]
\newtheorem{exa}{Example}[section]
\newtheorem{lem}{Lemma}[section]

\numberwithin{equation}{section}

\begin{document}

\date{}
\title[Hom-Poisson superalgebras with nondegenerate bilinear forms and compatible Hom-anti-pre-Poisson superalgebras]
{Hom-Poisson superalgebras with nondegenerate bilinear forms and compatible Hom-anti-pre-Poisson superalgebras}

\author{W. Ben Abdelhafidh and O. Ncib }

\address{University of Sfax, Faculty of Sciences of Sfax,   BP 1171, 3000 Sfax, Tunisia}

\email{wiembenabdelhafidh@gmail.com}

\address{University of Gafsa, Faculty of Sciences Gafsa, 2112 Gafsa, Tunisia}

\email{othmenncib@yahoo.fr, othmen.ncib@fsgf.u-gafsa.tn}


\begin{abstract}

In this paper, we first introduce the notions of Hom-anti-Zinbiel superalgebras, extending the corresponding anti-algebraic structures to the Hom-super setting. We develop their fundamental properties and characterize them in terms of anti-super-$\mathcal O$-operators and anti-Rota-Baxter operators. Furthermore, we introduce the concepts of super-commutative Connes cocycles on super-commutative Hom-associative superalgebras and super-commutative $2$-cocycles on Hom-Lie superalgebras. We prove that nondegenerate super-commutative Connes cocycles give rise to compatible Hom-anti-Zinbiel superalgebra structures, while nondegenerate super-commutative $2$-cocycles induce compatible Hom-anti-pre-Lie superalgebra structures. As a main application, we show that a Hom-Poisson superalgebra equipped with a nondegenerate super-commutative Connes cocycle on its Hom-associative component and a nondegenerate super-commutative $2$-cocycle on its Hom-Lie component canonically determines a compatible Hom-anti-pre-Poisson superalgebra, and conversely. This provides a unified framework linking Hom-Poisson and Hom-anti-pre-Poisson superalgebras and reveals the fundamental role played by nondegenerate supersymmetric bilinear forms in the structure theory of Hom-type superalgebras. Several further structural results and characterizations are also obtained.
\end{abstract}
\date{\today}
\subjclass[2020]{17A30, 17A36, 17B10, 17B40, 17B60, 17B63, 17D25}

\keywords{
Hom-anti-Zinbiel superalgebra,
Hom-anti-pre-Lie superalgebra, Hom-anti-pre-Poisson superalgebra, anti-super-$\mathcal O$-operator, super-commutative Connes cocycle,  
super-commutative $2$-cocycle.}

\maketitle
\tableofcontents


\section{Introduction}

Pre-Lie algebras, also known as left-symmetric algebras, were first implicitly introduced by Cayley in 1890 \cite{Cayley} in his study of rooted tree algebras. Since then, they have naturally emerged in a wide variety of mathematical contexts, including the theory of convex homogeneous cones \cite{E.B. Vinberg}, affine manifolds and affine structures on Lie groups \cite{Koszul}, as well as the deformation theory of associative algebras \cite{Gerstenhaber}. Owing to their rich algebraic structure and broad applicability, pre-Lie algebras have become an important subject of research with deep connections to numerous areas of mathematics and mathematical physics. These include complex and symplectic structures on Lie groups and Lie algebras \cite{And-Sal,B.Y.Chu,Dardié-Medina1,Dardié-Medina2,Lichnerowicz-Medina}, phase spaces of Lie algebras \cite{Bai,B.A.Kupershmidt}, integrable systems \cite{M.Bordemann}, classical and quantum Yang--Baxter equations \cite{Diatta-Medina}, combinatorics \cite{Ebrahimi-Fard}, Poisson brackets, infinite-dimensional Lie algebras, vertex algebras, quantum field theory \cite{Connes-Kreimer}, and operad theory \cite{Chapoton-Livernet}.

Hom-pre-Lie algebras constitute a natural Hom-type generalization of pre-Lie algebras, in which the defining pre-Lie identity is twisted by a linear self-map, referred to as the structure map. This notion was introduced by Makhlouf and Silvestrov in \cite{Makhlouf-Silvestrov1}, providing a framework that extends classical pre-Lie theory while encompassing various deformations and generalized algebraic structures. Since their introduction, Hom-pre-Lie algebras have attracted considerable attention and have been investigated from several perspectives. Notable developments include their geometric aspects \cite{Zhang-Yu-Wang}, the theory of universal $\alpha$-central extensions \cite{Sun-Chen-Zhou}, and the study of Hom-pre-Lie bialgebras \cite{Sun-Li}.\\

Anti-pre-Lie algebras, introduced by G. Liu and C. Bai in \cite{Liu-Bai1}, arise as the anti-analogue of pre-Lie algebras and provide a natural counterpart to the latter within the framework of nonassociative algebras. Their superization was subsequently established by Zhao Chen, Shanshan Liu, and Liangyun Chen in \cite{Zhao-Liu-Chen}, leading to the notion of anti-pre-Lie superalgebras.


Motivated by the Hom-type deformation theory of algebraic structures, the notion of a Hom-anti-pre-Lie superalgebra was introduced in \cite{wi-oth}. A Hom-anti-pre-Lie superalgebra is a triple $(\mathcal{A},\circ,\alpha)$ consisting of a $\mathbb{Z}_2$-graded vector space $\mathcal{A}$, an even bilinear product $\circ$, and an even linear map $\alpha:\mathcal{A}\rightarrow\mathcal{A}$ satisfying, for all homogeneous elements $x,y,z\in\mathcal{A}$,
\begin{equation}\label{Hom-anti-pre-Lie1}
\alpha(x)\circ(y\circ z)-(-1)^{|x||y|}\alpha(y)\circ(x\circ z)
=(-1)^{|x||y|}[y,x]_\circ\circ\alpha(z),
\end{equation}
\begin{equation}\label{Hom-anti-pre-Lie2}
[x,y]_\circ\circ\alpha(z)
+(-1)^{|x|(|y|+|z|)}[y,z]_\circ\circ\alpha(x)
+(-1)^{|z|(|x|+|y|)}[z,x]_\circ\circ\alpha(y)
=0,
\end{equation}
where
\[
[x,y]_\circ=x\circ y-(-1)^{|x||y|}y\circ x.
\]
The identities \eqref{Hom-anti-pre-Lie1} and \eqref{Hom-anti-pre-Lie2} are called the anti-pre-Lie super-identities. Moreover, the supercommutator $[\cdot,\cdot]_\circ$ endows $\mathcal{A}$ with the structure of a Hom-Lie superalgebra. When $\alpha=\mathrm{Id}_{\mathcal A}$, one recovers the notion of an anti-pre-Lie superalgebra. Thus, Hom-anti-pre-Lie superalgebras provide a natural Hom-type generalization of anti-pre-Lie superalgebras and furnish a broader framework for studying deformations and twisted analogues of these algebraic structures.\\

Zinbiel algebras, also referred to as dual Leibniz algebras, were introduced by J.-L. Loday \cite{Loday} in the framework of Leibniz cohomology. They constitute the Koszul dual operad of Leibniz algebras, while the terminology \emph{Zinbiel}, coined by J.-M. Lemaire, is obtained by reversing the word \emph{Leibniz}, thereby emphasizing this dual relationship. The defining identity of a Zinbiel algebra can be viewed as a nonassociative decomposition of the associativity relation, endowing the corresponding operad with a rich combinatorial and algebraic structure. A fundamental property of Zinbiel algebras is that the symmetrization of the Zinbiel product always defines a commutative associative algebra, establishing a profound connection between Zinbiel and associative algebraic structures. Owing to these remarkable features, Zinbiel algebras have found significant applications in Leibniz cohomology, operad theory, shuffle and divided power algebras, deformation theory, multiple zeta values, Rota-Baxter algebras, and related areas of algebra and mathematical physics \cite{Loday,Ginzburg-Kapranov,Dokas,Guo-Zhang,Zinbiel-bialgebra}. Motivated by the intrinsic duality between pre-Lie-type and Zinbiel-type structures, Chengming Bai and collaborators introduced \emph{anti-Zinbiel algebras} as the anti-counterpart of Zinbiel algebras (see, for instance, \cite{Bai-AntiZinbiel}). In contrast to Zinbiel algebras, whose symmetrized multiplication is commutative associative, anti-Zinbiel algebras are intimately connected with anti-pre-Lie algebras and provide a natural class of nonassociative algebras exhibiting rich algebraic and geometric features. Since their introduction, anti-Zinbiel algebras have attracted considerable attention due to their close relationships with anti-pre-Lie algebras, $\mathcal{O}$-operators, matched pairs, bialgebra structures, generalized Yang-Baxter equations, and various deformation and representation theories. Following the emergence of Hom-type algebras initiated by Hartwig, Larsson, and Silvestrov and further developed by Makhlouf and Silvestrov, Hom-type analogues of Zinbiel and anti-Zinbiel algebras were obtained by twisting their defining identities via a linear self-map. These Hom-generalizations not only recover the classical structures when the twisting map is the identity but also substantially enlarge the scope of the theory by incorporating deformation phenomena and twisted symmetries. In particular, Hom-Zinbiel and Hom-anti-Zinbiel algebras furnish an appropriate framework for investigating representations, matched pairs, bialgebras, Manin triples, $\mathcal{O}$-operators, and related algebraic structures. Consequently, they occupy an increasingly important position in the theory of Hom-algebras, revealing deep structural connections with Hom-associative, Hom-Leibniz, Hom-pre-Lie, Hom-Poisson, and other Hom-type algebraic systems. The primary objective of the present paper is to extend the theory of anti-Zinbiel algebras to the Hom-super setting by introducing the notion of Hom-anti-Zinbiel superalgebras. This new structure simultaneously generalizes anti-Zinbiel algebras, Hom-anti-Zinbiel algebras, and anti-Zinbiel superalgebras, thereby providing a unified framework in which the Hom-twisting and $\mathbb{Z}_2$-grading coexist. We investigate the fundamental properties of Hom-anti-Zinbiel superalgebras and establish their basic structural theory.\\

Poisson algebras constitute one of the fundamental algebraic structures arising in differential geometry, Hamiltonian mechanics, deformation quantization, and mathematical physics. A Poisson algebra is simultaneously a commutative associative algebra and a Lie algebra whose two structures are related through the Leibniz identity, expressing the compatibility between the associative multiplication and the Lie bracket. Since their appearance in the study of classical mechanics, Poisson algebras have become indispensable tools in symplectic geometry, integrable systems, and quantization theory. The algebraic theory of Poisson algebras has been extensively developed and is now a central topic in both pure and applied mathematics; see, for instance, \cite{Kosmann-Schwarzbach,Lichnerowicz}.

Motivated by the theory of dendriform algebras introduced by Loday, Aguiar introduced the notion of pre-Poisson algebras, which provide a splitting of the Poisson algebraic structure in much the same way that dendriform algebras split associative algebras and pre-Lie algebras split Lie algebras \cite{Aguiar}. A pre-Poisson algebra consists of a permutative algebra together with a pre-Lie algebra satisfying suitable compatibility conditions. The commutator of the pre-Lie product and the symmetrization of the permutative multiplication naturally recover a Poisson algebra. Consequently, pre-Poisson algebras furnish an algebraic framework underlying Poisson geometry, classical $r$-matrices, deformation theory, and various splitting constructions in operad theory. Their structural properties, cohomology, and deformation theory have subsequently attracted considerable attention.

More recently, inspired by the theory of anti-pre-Lie algebras and anti-dendriform-type structures, Bai and collaborators introduced the notion of anti-pre-Poisson algebras, which may be regarded as an anti-version of pre-Poisson algebras \cite{Liu-Bai1}. In contrast to the pre-Poisson case, the associative component is replaced by an anti-permutative algebra while the Lie-theoretic component is governed by an anti-pre-Lie algebra, together with a collection of compatibility identities reflecting the anti-symmetric nature of the underlying operations. Anti-pre-Poisson algebras naturally generalize anti-pre-Lie algebras and provide an appropriate algebraic framework for studying noncommutative Poisson-type structures. Their theory has revealed close connections with $\mathcal O$-operators, Connes cocycles, matched pairs, bialgebra structures, and Manin triples, thereby extending many classical constructions from Poisson geometry to the anti-setting.\\

\textbf{Commutative $2$-cocycles} on Lie algebras and \textbf{commutative Connes cocycles} on commutative associative algebras have recently attracted considerable attention as symmetric analogues of symplectic and Frobenius structures, respectively. Let $(\mathcal A,[\cdot,\cdot])$ be a Lie algebra. A commutative $2$-cocycle on $\mathcal A$ is a symmetric bilinear form satisfying
$$
\mathfrak B([x,y],z)+\mathfrak B([z,x],y)+\mathfrak B([y,z],x)=0,
\;\forall x,y,z\in\mathcal A.
$$
This notion was systematically investigated by Dzhumadil'daev and Zusmanovich \cite{DzhZu}, who established its fundamental properties and its close relationship with antiderivations. Subsequently, Liu and Bai \cite{Liu-Bai1} introduced anti-pre-Lie algebras and proved that every Lie algebra endowed with a nondegenerate commutative $2$-cocycle carries a canonical anti-pre-Lie algebra structure whose sub-adjacent Lie algebra is precisely the original Lie algebra. Conversely, every anti-pre-Lie algebra naturally determines a nondegenerate commutative $2$-cocycle on its associated Lie algebra. This correspondence provides the symmetric counterpart of the classical relationship between symplectic Lie algebras and pre-Lie algebras and has further connections with anti-$\mathcal O$-operators, admissible Novikov algebras, anti-pre-Lie-Poisson algebras and related bialgebraic structures \cite{Liu-Bai1,LiuBaiAPB}.

Dually, let $(\mathcal A,\mu)$ be a commutative associative algebra. A commutative Connes cocycle on $\mathcal A$ is a symmetric bilinear form satisfying
$$
\mathfrak B(\mu(x,y),z)+
\mathfrak B(\mu(y,z),x)+
\mathfrak B(\mu(z,x),y)=0,
\;\forall x,y,z\in\mathcal A.
$$
This notion may be viewed as the commutative counterpart of the classical Connes cocycle introduced in cyclic cohomology. Building upon the theory of Connes cocycles and dendriform algebras developed by Bai \cite{BaiConnes}, Gao, Liu and Bai established that nondegenerate commutative Connes cocycles are in one-to-one correspondence with anti-Zinbiel algebras (equivalently, commutative anti-dendriform algebras). More precisely, every commutative associative algebra equipped with a nondegenerate commutative Connes cocycle admits a compatible anti-Zinbiel algebra structure, while every anti-Zinbiel algebra induces a commutative associative algebra together with a canonical commutative Connes cocycle. Consequently, the pair
(\text{commutative associative algebra},\ \text{commutative Connes cocycle})
plays a role completely analogous to that of
(\text{Lie algebra},\ \text{commutative } $2$\text{-cocycle}),
forming the associative counterpart of the correspondence between Lie algebras and anti-pre-Lie algebras \cite{Dongfang-Liu-Bai}.

Motivated by the remarkable development of Hom-type algebras and superalgebras, it is natural to investigate whether these correspondences remain valid in the Hom-super framework. In this paper, we introduce the notions of super-commutative $2$-cocycles on Hom-Lie superalgebras and super-commutative Connes cocycles on super-commutative Hom-associative superalgebras. The former are supersymmetric even bilinear forms satisfying the Hom-twisted cocycle identity associated with the Hom-Lie superbracket, whereas the latter are supersymmetric even bilinear forms satisfying the corresponding Hom-twisted Connes cocycle identity with respect to the Hom-associative multiplication. We prove that nondegenerate super-commutative $2$-cocycles characterize Hom-anti-pre-Lie superalgebras, while nondegenerate super-commutative Connes cocycles characterize Hom-anti-Zinbiel superalgebras. These results extend the classical correspondences between Lie algebras and anti-pre-Lie algebras, and between commutative associative algebras and anti-Zinbiel algebras, to the Hom-super setting, thereby providing a unified framework for studying compatible supersymmetric bilinear forms on Hom-type superalgebras. Furthermore, as an application of the above characterizations, we establish a correspondence between Hom-Poisson superalgebras and their compatible Hom-anti-pre-Poisson superalgebras through the framework of nondegenerate supersymmetric bilinear forms. More precisely, we prove that every Hom-Poisson superalgebra equipped with a nondegenerate super-commutative Connes cocycle on its super-commutative Hom-associative part and a nondegenerate super-commutative $2$-cocycle on its Hom-Lie superalgebra part canonically induces a compatible Hom-anti-pre-Poisson superalgebra structure, and conversely. This correspondence generalizes the well-known relationships between Poisson algebras and compatible anti-pre-Poisson algebras to the Hom-super setting, thereby providing a conceptual bridge between Hom-Poisson geometry and Hom-anti-pre-Poisson structures through invariant supersymmetric bilinear forms.
\subsection*{Organization} The paper is organized as follows:\\

In Section \ref{Sec2}, we recall the basic notions and preliminary results concerning super-commutative Hom-associative superalgebras, Hom-Lie superalgebras, their representations and dual representations, as well as Hom-Zinbiel superalgebras. We also review quadratic structures and introduce super-commutative Connes cocycles on super-commutative Hom-associative superalgebras, providing the foundational material required for the subsequent developments. In Section \ref{Sec3}, we introduce the notions of Hom-anti-Zinbiel superalgebras and Hom-anti-pre-Lie superalgebras. Furthermore, we define super-commutative Connes cocycles and super-commutative $2$-cocycles in the Hom-super setting and establish their relationships with Hom-anti-Zinbiel and Hom-anti-pre-Lie superalgebras, respectively. Several characterizations in terms of anti-super-$O$-operators and anti-Rota-Baxter operators are also obtained. In Section \ref{Sec4}, we investigate Hom-Poisson superalgebras endowed with nondegenerate supersymmetric bilinear forms. By combining the correspondence between super-commutative Connes cocycles and Hom-anti-Zinbiel superalgebras with the correspondence between super-commutative $2$-cocycles and Hom-anti-pre-Lie superalgebras, we establish a one-to-one relationship between Hom-Poisson superalgebras and their compatible Hom-anti-pre-Poisson superalgebras. This provides a unified framework linking Hom-Poisson structures and Hom-anti-pre-Poisson structures through nondegenerate supersymmetric bilinear forms.

Throughout this paper, unless otherwise specified, all vector spaces are assumed to be
finite-dimensional over a field $\mathbb{K}$ of characteristic $0$.

\section{Preliminaries and basic results}\label{Sec2} 

In this section, we recall the basic definitions and fundamental results that will be used throughout the paper. In particular, we review super-commutative Hom-associative and Hom-Lie superalgebras, their representations, quadratic structures, and related notions. These preliminaries provide the algebraic framework for the construction and study of quadratic Hom-Poisson superalgebras and the main results established in the subsequent sections.

\subsection{Hom-associative and Hom-Zinbiel superalgebras}

\begin{df}(\cite{Faouzi-Abdenacer})
A \textbf{super-commutative Hom-associative superalgebra} is a triplet $(\mathcal{A},\mu,\alpha)$ consisting of a $\mathbb{Z}_2$-graded vector space $\mathcal{A}$, an even linear map
 $\alpha:\mathcal{A}\rightarrow\mathcal{A}$, (i.e: $\alpha(\mathcal{A}_i)\subseteq\mathcal{A}_i)$ and an even bilinear map
 $\mu:\mathcal A\times \mathcal{A}\rightarrow\mathcal{A}$, (i.e: $\mu(\mathcal{A}_i,\mathcal{A}_j)\subseteq\mathcal{A}_{i+j})$ such that $\mu$ is supercommuatative, that is $\mu(x,y)=(-1)^{|x||y|}\mu(y,x)$ and the following condition hold:
 $$ass_{\mu}^\alpha(x,y,z)=0\;(\text{Hom-associativity}),$$
 where $$ass_{\mu}^\alpha(x,y,z)=\mu(\mu(x,y),\alpha(z))-\mu(\alpha(x),\mu(y,z)),\;\forall x,y,z\in\mathcal H(\mathcal{A}).$$
\end{df}
If in addition $\mu(\alpha(x),\alpha(y))=\alpha(\mu(x,y)),\;\forall\;x,y\in\mathcal{H}(\mathcal{A})$),
the Hom-associative superalgebra $(\mathcal{A},\mu,\alpha)$ is said to be multiplicative.\\

\begin{df}(\cite{O-N})
 A \textbf{representation} of a super-commutative Hom-associative superalgebra $(\mathcal A, \mu, \alpha)$  is a quadruple $(V,\eta,\beta)$ where $V$ is a $\mathbb Z_2$-graded vector space, $\beta\in gl(V)$ and $\eta: \mathcal A\rightarrow gl(V)$ are two even linear maps
such that the following condition hold for all $x, y \in\mathcal A:$
\begin{equation}\label{rAs2}
\eta(\mu(x,y))\beta=\eta(\alpha(x))\eta(y)
\end{equation}
\end{df}
\begin{rem}
\begin{enumerate}
\item 
Since $\mu$ is super-commutative, Eq. \eqref{rAs2} also implies
\begin{equation}\label{rAs3}
\eta(\alpha(x))\eta(y)=(-1)^{|x||y|}\eta(\alpha(y))\eta(x). \end{equation}
\item Assume that $(\mathcal A,\mu,\alpha)$ is a multiplicative Hom-associative superalgebra. Then the representation map $\eta$ is compatible with the twisting maps $\alpha$ and $\beta$, namely,
\begin{equation}\label{rAs1}
\eta(\alpha(x))\beta=\beta\eta(x),\;\forall x\in \mathcal H(\mathcal A).    
\end{equation}
\end{enumerate}
\end{rem}
In particular, $(\mathcal A, \mathfrak L, \alpha)$ is a representation over $(\mathcal A,\mu,\alpha)$ which is called the adjoint representation, where $\mathfrak L: \mathcal A\to End_\mathbb K(\mathcal A)$ is an even linear map defined by 
\begin{equation}\label{adj-rep-H-ass-sup}
\mathfrak L(x)(y)=\mu(x,y),\; \forall x,y\in\mathcal A.
\end{equation}
Suppose that $(\mathcal{A},\mu,\alpha)$ is a super-commutative Hom-associative superalgebra.
Let $V$ be a $\mathbb{Z}_2$-graded vector space, $\eta:\mathcal A\to gl(V)$ and $\beta\in gl(V)$ are two even linear maps. Then $(V,\eta,\beta)$ is a representation of $(\mathcal{A},\mu,\alpha)$ if and only if $(\mathcal{A}\oplus V,\mu_{\mathcal{A}\oplus V},\alpha\oplus\beta)$
is a super-commutative Hom-associative superalgebra, where $\mu_{\mathcal{A}\oplus V}$ and $(\alpha+\beta)$
are defined for all $x,y\in\mathcal{H}(\mathcal{A}),\;u,v \in\mathcal{H}(V)$ by
\begin{eqnarray}
    \mu_{\mathcal{A}\oplus V}(x+u,y+v)&=&\mu(x,y)+\eta(x)v+(-1)^{|y||u|}\eta(y)u,\label{Hom-ass-direct-sum1}\\
    (\alpha\oplus\beta)(x+u)&=&\alpha(x)+\beta(u)\label{Hom-ass-direct-sum2}.
\end{eqnarray}
This Hom-associative superalgebra is called semi-direct product of $(\mathcal{A},\mu,\alpha)$ and $(V,\eta,\beta)$ and denoted by
 $\mathcal{A}\ltimes^{\alpha}_{\eta,\beta}V$ or simply $\mathcal{A}\ltimes_{\eta} V$.\\

Let $\mathcal A$ and $V$ be two $\mathbb Z_2$-graded vector spaces. For even linear map $\eta:\mathcal A\to gl(V)$, we set $\eta^\ast:\mathcal A\to V^\ast$ by

\begin{equation}\label{n-dual-rep-H-ass-sup}
    \langle\eta^*(a)(\xi),u\rangle=-\langle\xi,\eta(a)(u)\rangle.
\end{equation}

Here $\langle\cdot,\cdot\rangle$, is the usual pairing between $V$ and $V^\ast$.
If $(V,\eta,\beta)$ is a representation of a  super-commutative multiplicative Hom-associative superalgebra $(\mathcal A,\mu,\alpha)$, then $(\eta^\ast)$ does not generally define a representation of $\mathcal A$ anymore.

Let us define $\eta^\star:A\longrightarrow gl(V^*)$ by
\begin{eqnarray}
  \label{eq:1.3}\eta^\star(a)(\xi)&:=&\eta^*(\alpha(a))\big{(}(\beta^{-2})^*(\xi)\big{)},\;\forall a\in\mathcal H(\mathcal A),\;\xi\in\mathcal H(V^\ast).
\end{eqnarray}

\begin{thm}\label{dual-rep-H-ass-sup}
Let $(V,\eta,\beta)$ be a representation of a super-commutative multiplicative Hom-associative superalgebra $(\mathcal A,\mu,\alpha)$. Then $(V^*,-\eta^\star,(\beta^{-1})^*)$ is a representation of $(\mathcal A,\mu,\alpha)$, which is called the dual representation of $(V,\eta,\beta)$.
\end{thm}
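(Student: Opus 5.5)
The proof is a direct verification of the single defining identity \eqref{rAs2} for the triple $(V^*,-\eta^\star,(\beta^{-1})^*)$. Evenness of $-\eta^\star$ and of $(\beta^{-1})^*$ is immediate, since $\alpha$, $\beta$ and $\eta$ are even. Throughout I assume, as the definition \eqref{eq:1.3} already does implicitly, that $\beta$ is invertible. What has to be shown is that for all homogeneous $x,y\in\mathcal A$, $\xi\in V^*$ and $u\in V$,
$$\big\langle -\eta^\star(\mu(x,y))\,(\beta^{-1})^*\xi,\,u\big\rangle=\big\langle \eta^\star(\alpha(x))\,\eta^\star(y)\,\xi,\,u\big\rangle .$$
The two minus signs on the right-hand side cancel. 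I will transport both sides to $V$ via \eqref{n-dual-rep-H-ass-sup}, so that each becomes $\pm\langle \xi, T u\rangle$ for some operator $T$ on $V$ built from $\eta$ and powers of $\beta$. The Koszul signs $(-1)^{|a||\xi|}$ coming from moving $\eta(a)$ past $\xi$ are tracked along the way.

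\textbf{Tools.} Multiplicativity gives \eqref{rAs1}, which I will use in the conjugation form
$$\eta(\alpha(a))=\beta\,\eta(a)\,\beta^{-1},\qquad \eta(\alpha^2(a))=\beta^2\eta(a)\beta^{-2}.$$
I will also use \eqref{rAs2} in the form $\eta(\mu(x,y))=\eta(\alpha(x))\eta(y)\beta^{-1}$, together with $\alpha\mu(x,y)=\mu(\alpha x,\alpha y)$.

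\textbf{Left-hand side.} The operator is
$$\beta^{-3}\eta(\alpha\mu(x,y))=\beta^{-3}\eta(\mu(\alpha x,\alpha y)).$$
Applying \eqref{rAs2} and then the conjugation rule, this reduces to
$$\beta^{-2}\eta(\alpha(x))\eta(y)\beta^{-2}=\beta^{-1}\eta(x)\beta^{-1}\eta(y)\beta^{-2}.$$

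\textbf{Right-hand side.} Dualising twice reverses the order of the operators. The resulting operator is
$$\beta^{-2}\eta(\alpha(y))\,\beta^{-2}\eta(\alpha^2(x)),$$
carrying the sign $(-1)^{|x||y|}$ from the reordering. The conjugation rules simplify this to $(-1)^{|x||y|}\beta^{-1}\eta(y)\beta^{-1}\eta(x)\beta^{-2}$.

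\textbf{Matching the two sides.} It remains to show
$$\beta^{-1}\eta(x)\beta^{-1}\eta(y)=(-1)^{|x||y|}\beta^{-1}\eta(y)\beta^{-1}\eta(x).$$
This follows from \eqref{rAs3}, the consequence of super-commutativity, rewritten via $\eta(\alpha(a))=\beta\eta(a)\beta^{-1}$.

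\textbf{Main obstacle.} The hard part is sign bookkeeping rather than any conceptual step. The pairing convention \eqref{n-dual-rep-H-ass-sup} is written without an explicit Koszul sign. I need to check that the signs from the two dual transpositions, the sign in \eqref{rAs3}, and the overall minus sign in $-\eta^\star$ combine consistently, and that the order reversal under dualisation is exactly what \eqref{rAs3} compensates. I would first do the computation in the purely even case to fix the $\beta$-powers, and then insert the parity signs term by term.
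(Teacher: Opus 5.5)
The paper states this theorem without proof, so there is no argument of the authors to compare yours with. Your direct verification of \eqref{rAs2} on $V^*$ is the natural route, and your operator reductions are correct. The left side becomes $\beta^{-1}\eta(x)\beta^{-1}\eta(y)\beta^{-2}$, the right side becomes $(-1)^{|x||y|}\beta^{-1}\eta(y)\beta^{-1}\eta(x)\beta^{-2}$, and \eqref{rAs3} together with $\eta(x)\beta^{-1}=\beta^{-1}\eta(\alpha(x))$ matches them. However, two things you treat as given, or as mere bookkeeping, are exactly where the statement as the paper formulates it breaks. They have to be made explicit hypotheses, not things to be checked.

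\textbf{The signs.} Suppose \eqref{n-dual-rep-H-ass-sup} is read literally, with no Koszul sign. Then $-\eta^\star(a)$ is the ordinary transpose of $\beta^{-2}\eta(\alpha(a))$, and transposition reverses products with no sign at all. You would then need $\eta(\alpha(x))\eta(y)=\eta(\alpha(y))\eta(x)$, which together with \eqref{rAs3} forces $\eta(\alpha(x))\eta(y)=0$ for all odd $x,y$. Concretely, take the Grassmann algebra $\Lambda(\theta_1,\theta_2)$ with $\alpha=\mathrm{id}$ and its adjoint representation ($\eta=\mathfrak L$, $\beta=\mathrm{id}$). The literal convention gives $\big(-\eta^\star(\theta_1)\big)\big(-\eta^\star(\theta_2)\big)=\eta^\star(\theta_1\theta_2)\neq-\eta^\star(\theta_1\theta_2)$, because $\mathfrak L(\theta_1\theta_2)\neq 0$. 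So you must replace the pairing by $\langle\eta^*(a)\xi,u\rangle=-(-1)^{|a||\xi|}\langle\xi,\eta(a)u\rangle$. Both sides then carry the common factor $(-1)^{(|x|+|y|)|\xi|}$. The super-transpose, defined by $\langle A^{st}\xi,u\rangle=(-1)^{|A||\xi|}\langle\xi,Au\rangle$, satisfies $(AB)^{st}=(-1)^{|A||B|}B^{st}A^{st}$, and this produces precisely your factor $(-1)^{|x||y|}$.

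\textbf{The hypothesis \eqref{rAs1}.} Despite the Remark you cite, \eqref{rAs1} does not follow from multiplicativity and \eqref{rAs2}. Take $\mathcal A=\mathbb K x$ even with $\mu=0$ and $\alpha=\mathrm{id}$, and take $V=\mathbb K^2$ even with $\eta(x)=E_{12}$ and $\beta=\mathrm{id}+E_{21}$, where the $E_{ij}$ are matrix units. Then \eqref{rAs2} holds because $E_{12}^2=0$, yet $E_{12}\beta\neq\beta E_{12}$. The conclusion fails too: \eqref{rAs2} for the dual would require $\beta^{-2}E_{12}\beta^{-2}E_{12}=0$, whereas $E_{12}\beta^{-2}E_{12}=-2E_{12}$. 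Hence \eqref{rAs1}, like invertibility of $\beta$, must be part of the definition of a representation, as is standard for Hom-modules.

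With these two amendments your proof is complete. Note also that once \eqref{rAs1} is assumed, multiplicativity of $\alpha$ is superfluous: $\eta(\alpha(\mu(x,y)))=\beta\,\eta(\mu(x,y))\,\beta^{-1}$ already yields your reduction of the left side.
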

In particular $(\mathcal A^\ast,-\mathfrak L^\star,(\alpha^{-1})^\ast)$ is a representation of $(\mathcal A,\mu,\alpha)$ which is called the coadjoint representation of $(\mathcal A,\mathfrak L,\alpha)$ where $\mathfrak L$ and $\mathfrak R$ are defined by \eqref{adj-rep-H-ass-sup}.\\

Let $(\mathcal A,\mu,\alpha)$ be a super-commutative Hom-associative  superalgebra and $(V,\eta,\beta)$ be a representation. Recall that an even linear map $T:V\to\mathcal A$ is called a \textbf{super-$\mathcal O$-operator} on $(\mathcal A,\mu,\alpha)$ associate with $(V,\eta,\beta)$ if the following equations holds:
\begin{eqnarray}
T\circ\beta&=&\alpha\circ T,\label{cond-O-oper-ncomm-Hom-ass1} \\
\mu( T(u),T(v))&=&T\big(\eta(T(u))v+(-1)^{|u||v|}\eta(T(v))u\big),\;\forall u,v\in\mathcal H(V).\label{cond-O-oper-ncomm-Hom-ass2}
\end{eqnarray}
In particular, a super-$\mathcal O$-operator $\mathcal R$ of $(\mathcal A,\mu,\alpha)$ associated with the representation $(\mathcal A,\mathfrak L,\alpha)$
is called an \textbf{anti-Rota–Baxter operator (of weight zero)}; that is, $\mathcal R :\mathcal A\to\mathcal A $ is an even linear map commuting with $\alpha$ and satisfying
\begin{equation}\label{RB-oper-noncomm-ass}
 \mu(\mathcal R(x),\mathcal R(y))=\mathcal R\big(\mu(\mathcal R(x),y)+\mu(x,\mathcal R(y))\big),\;\forall x,y\in\mathcal H(\mathcal A).   
\end{equation}

\begin{thm}\label{Hom-Zinb-by-O-oper}
Let $T:V\rightarrow\mathcal A$ be a super-$\mathcal O$-operator on a super-commutative Hom-associative superalgebra $(\mathcal A,\mu,\alpha)$ with respect to a representation $(V,\eta,\beta)$. Let us define a binary operation $\star_T:V\otimes V\to V$ by:
\begin{equation}\label{H-Zinb-from-O-oper-H-ass}  
u \star_T v=\eta(T(u))v,\;\forall u,v\in\mathcal H(V).
\end{equation}
Then $(V,\star_T,\beta)$ is a Hom-zinbiel superalgerbra. In this case $T$ is a homomorphism of super-commutative Hom-associative superalgebras from $(V,\mu_{\star_T},\beta)$ to $(\mathcal A,\mu,\alpha)$. Furthermore, there is an induced Hom-Zinbiel superalgebra structure on
$T(V)=\{T(u),\;u\in V\}\subseteq A$ given by
\begin{equation}\label{Hom-Zinb-T(V)}
T(u)\star T(v)=T(u\star_Tv),\;\forall u,v\in\mathcal H(V).
\end{equation}
\end{thm}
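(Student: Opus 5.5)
The plan is to verify the Hom-Zinbiel super-identity for $\star_T$ directly. I take it in its standard form, for homogeneous $u,v,w$:
$$\beta(u)\star(v\star w)=\mu_\star(u,v)\star\beta(w),\qquad \mu_\star(u,v)=u\star v+(-1)^{|u||v|}v\star u.$$
The only ingredients are the representation axiom \eqref{rAs2} and the two defining conditions \eqref{cond-O-oper-ncomm-Hom-ass1}--\eqref{cond-O-oper-ncomm-Hom-ass2} of a super-$\mathcal O$-operator. Evenness of $\star_T$ is immediate, since $T$ and $\eta$ are even.

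\emph{Step 1 (the identity).} I rewrite the left side step by step:
$$\beta(u)\star_T(v\star_T w)=\eta(T\beta(u))\eta(T(v))w=\eta(\alpha(T(u)))\eta(T(v))w,$$
using $T\beta=\alpha T$. By \eqref{rAs2} this equals $\eta(\mu(T(u),T(v)))\beta(w)$. Applying \eqref{cond-O-oper-ncomm-Hom-ass2} replaces $\mu(T(u),T(v))$ by
$$T\big(\eta(T(u))v+(-1)^{|u||v|}\eta(T(v))u\big)=T(\mu_{\star_T}(u,v)).$$
Hence the left side is $\eta(T(\mu_{\star_T}(u,v)))\beta(w)=\mu_{\star_T}(u,v)\star_T\beta(w)$, as required.

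\emph{Step 2 (the homomorphism).} The computation above already shows that $T(\mu_{\star_T}(u,v))=\mu(T(u),T(v))$, and $T\beta=\alpha T$ is assumed. The symmetrization $\mu_{\star_T}$ is supercommutative by construction. If Hom-associativity of $(V,\mu_{\star_T},\beta)$ is not already recorded as a general fact about Hom-Zinbiel superalgebras, I would derive it by expanding $\mu_{\star_T}(\mu_{\star_T}(u,v),\beta(w))-\mu_{\star_T}(\beta(u),\mu_{\star_T}(v,w))$. Each of its terms has the form $\beta(a)\star(b\star c)$ or $(a\star b)\star\beta(c)$. Applying the identity of Step 1 to each term of the form $\beta(a)\star(b\star c)$, the sum cancels in pairs once the Koszul signs are tracked.

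\emph{Step 3 (the induced structure on $T(V)$).} This is the main obstacle: one must check that $T(u)\star T(v):=T(u\star_T v)$ is well defined, i.e.\ independent of the chosen preimages.
\begin{itemize}
\item Independence in $u$ is clear, because $u\star_T v=\eta(T(u))v$ depends on $u$ only through $T(u)$.
\item For independence in $v$, take $v_0\in\ker T$, which I may assume homogeneous since $T$ is even so $\ker T$ is graded. Then \eqref{cond-O-oper-ncomm-Hom-ass2} gives
$$T(\eta(T(u))v_0)=\mu(T(u),T(v_0))-(-1)^{|u||v_0|}T(\eta(T(v_0))u)=0-0=0.$$
\end{itemize}
Also $\alpha(T(V))=T(\beta(V))\subseteq T(V)$, so $\alpha$ restricts to $T(V)$. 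By construction $T$ is a surjective map $V\to T(V)$ intertwining $\star_T$ with $\star$ and $\beta$ with $\alpha$. The Hom-Zinbiel identity on $V$ from Step 1 therefore pushes forward to $(T(V),\star,\alpha|_{T(V)})$, which completes the proof.
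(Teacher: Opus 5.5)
Your proposal is correct, and Step 1 is exactly the paper's argument: the paper also reduces both sides to $\eta(\mu(T(u),T(v)))\beta(w)=\eta(\alpha(T(u)))\eta(T(v))w$, using \eqref{cond-O-oper-ncomm-Hom-ass2}, $T\beta=\alpha T$ and \eqref{rAs2}. Your Steps 2 and 3 are also sound: Hom-associativity of $\mu_{\star_T}$ follows by pairwise cancellation, and the product on $T(V)$ is well defined via $\ker T$; these steps supply the homomorphism and induced-structure claims, which the paper's proof does not address.
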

\begin{proof}
Let $u,v,w \in \mathcal H(V)$  
\begin{align*}
(u\star_T v) \star_T \beta(w)+(-1)^{|u||v|}(v\star_T u)\star_T \beta(w)&=(\eta(T(u))v)\star_T \beta(w)+(-1)^{|u||v|} (\eta(T(v))u)\star_T \beta(w) \\  
&=\eta(T(\eta(T(u))v+(-1)^{|u||v|} \eta (T(V))u)\beta(w) \\
&\stackrel{\eqref{cond-O-oper-ncomm-Hom-ass2}}{=}\eta(\mu(T(u), T(v)))\beta(w)
\end{align*}
\begin{align*}
\beta(u)\star_T (v\star_T w)&=\beta(u)\star_T \eta(T(v))w\\
&=\eta(\alpha T((u)))\eta(T(v))w
\end{align*}
Since $(V,\eta,\beta)$ is a representation of $(\mathcal A,\mu,\alpha)$, we have
\begin{align*}
\eta(\mu(T(u), T(v)))\beta(w)=\eta(\alpha T((u)))\eta(T(v))w
\end{align*}
Then,$(V,\star_T,\beta)$ is a Hom-zinbiel superalgerbra.
\end{proof}
\begin{cor}\label{H-Zinb-to-Hom-ass-RB}
Let $\mathcal R$ be a Rota-Baxter operator of weight zero on a supercommutative Hom-associative superalgebra  $(\mathcal A,\mu,\alpha)$. Then, $(\mathcal A,\star_\mathcal R,\alpha)$ where $\star_\mathcal R:\mathcal A\to\mathcal A$ is defined by 
\begin{equation}\label{H-anti-dend-From-ARB-Hass} 
 x\star_\mathcal R y=\mu(\mathcal R(x),y),
\end{equation}
for any $x,y\in\mathcal H(\mathcal A)$ is a Hom-Zinbiel superalgebra.
\end{cor}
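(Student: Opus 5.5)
This corollary is a direct specialization of Theorem \ref{Hom-Zinb-by-O-oper}, so the plan is to realize $\mathcal R$ as a super-$\mathcal O$-operator and read off the induced product. Take $V=\mathcal A$, $\eta=\mathfrak L$ the adjoint map of \eqref{adj-rep-H-ass-sup}, and $\beta=\alpha$. The paper already records that $(\mathcal A,\mathfrak L,\alpha)$ is a representation of $(\mathcal A,\mu,\alpha)$. For completeness, I would verify \eqref{rAs2} in this case: it reads $\mu(\mu(x,y),\alpha(z))=\mu(\alpha(x),\mu(y,z))$, which is exactly Hom-associativity.

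Next I would check the two conditions \eqref{cond-O-oper-ncomm-Hom-ass1} and \eqref{cond-O-oper-ncomm-Hom-ass2} for $T=\mathcal R$. The first is the assumption $\mathcal R\circ\alpha=\alpha\circ\mathcal R$, which is built into the definition of a Rota--Baxter (anti-Rota--Baxter) operator of weight zero. For the second, the right-hand side becomes $\mathcal R\big(\mu(\mathcal R(x),y)+(-1)^{|x||y|}\mu(\mathcal R(y),x)\big)$. The only small point to check is the sign. Since $\mu$ is super-commutative and $\mathcal R$ is even, we have
$$(-1)^{|x||y|}\mu(\mathcal R(y),x)=(-1)^{|x||y|}(-1)^{|x||y|}\mu(x,\mathcal R(y))=\mu(x,\mathcal R(y)).$$
So the condition coincides with \eqref{RB-oper-noncomm-ass}, and $\mathcal R$ is a super-$\mathcal O$-operator associated with $(\mathcal A,\mathfrak L,\alpha)$.

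Applying Theorem \ref{Hom-Zinb-by-O-oper} then gives a Hom-Zinbiel superalgebra $(\mathcal A,\star_{\mathcal R},\alpha)$ with $x\star_{\mathcal R}y=\mathfrak L(\mathcal R(x))y=\mu(\mathcal R(x),y)$. This is precisely \eqref{H-anti-dend-From-ARB-Hass}. I expect no real obstacle here; the only thing to watch is the sign bookkeeping from super-commutativity in the second step.
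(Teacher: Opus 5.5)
Your proposal is correct and follows the paper's route. The paper gives no separate proof: it treats the statement as the specialization of Theorem \ref{Hom-Zinb-by-O-oper} to the adjoint representation $(\mathcal A,\mathfrak L,\alpha)$, where a super-$\mathcal O$-operator is exactly an $\alpha$-commuting even map satisfying \eqref{RB-oper-noncomm-ass}. Your checks of \eqref{rAs2} for $\mathfrak L$ and of the sign identity $(-1)^{|x||y|}\mu(\mathcal R(y),x)=\mu(x,\mathcal R(y))$ supply the details the paper leaves implicit.
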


We recall some facts on vector superspaces from \cite{Bai-Guo-Zhang, Cheng-Wang, M-Scheunert}. Let  $V=V_{\bar 0}\oplus
V_{\bar 1}$ and $W=W_{\bar 0}\oplus W_{\bar 1}$ be two vector superspaces over a field $\mathbb K$.
A bilinear form $\mathfrak B: V\times W \longrightarrow \mathbb K$
is called
 odd  if $\mathfrak B(V_{\bar{0}}, W_{\bar{0}})=\mathfrak B(V_{\bar{1}}, W_{\bar{1}})=0$, and
 even  if $\mathfrak B(V_{\bar{0}}, W_{\bar{1}})=\mathfrak B(V_{\bar{1}}, W_{\bar{0}})=0$;
nondegenerate if  $\mathfrak B(v, w)=0$
for all $w\in W$ implies $v=0$, and $\mathfrak B(v, w)=0$
for all $v\in V$ implies $w=0$.
The linear dual $V^*={\rm Hom}(V, \mathbb  K)$  of $V$ inherits a
$\mathbb{Z}_2$-graduation $V^*=V^*_{\bar
0}\oplus V^*_{\bar 1}$ with
\begin{equation}\label{eq:2.3}
    V^*_\alpha:=\big\{u^*\in V^*| u^*(V_{\alpha+\bar
        1})=\{0\}\big\},\;\; \forall \alpha\in \mathbb{Z}_2.
\end{equation}

\begin{df}
Let $(\mathcal A,\mu,\alpha)$ be a super-commutative Hom-associative superalgebra. An even bilinear form $\mathfrak{B}:\mathcal A\times\mathcal A\to \mathbb K$ is called:
\begin{enumerate}
\item[(1)] \textbf{supersymmetric} if $\mathfrak{B}(x,y)=(-1)^{|x||y|}\mathfrak{B}(y,x),\,\,\ \forall x,y\in\mathcal{H}(\mathcal A)$
\item[(2)] \textbf{super-skewsymmetric} if $\mathfrak{B}(x,y)=-(-1)^{|x||y|}\mathfrak{B}(y,x),\;\forall x,y\in\mathcal{H}(\mathcal A)$.
\item[(3)] \textbf{Invariant} if 
\begin{equation}\label{inv-bil-form-H-ass-sup}
 \mathfrak B(\mu(x,y),\alpha(z))=\mathfrak B(\alpha(x),\mu(y,z)),\;\forall x,y,z\in\mathcal H(\mathcal A).
\end{equation}
\end{enumerate}
\end{df}
Suppose that $(\mathcal A,\mu,\alpha)$ is a super-commutative Hom-associative superalgebra. Then the natural nondegenerate symmetric
bilinear form $\mathfrak B_d$ on $\mathcal A\oplus\mathcal A^\ast$ defined by 
\begin{equation}\label{natur-biln-form-H-assoc-sup}
 \mathfrak B_d((x,a^\ast),(y,b^\ast))=<x,b^\ast>+<a^\ast,y>,\;\forall x,y\in\mathcal H(\mathcal A),\;a^\ast,b^\ast\in\mathcal H(\mathcal A^\ast),   
\end{equation}
is invariant on the super-commutative Hom-associative superalgebra $\mathcal A\ltimes_{-\mathfrak L^\star_\mu}\mathcal A^\ast$.
\begin{df}\label{Hom-dend-superalg}
 \textbf{(Hom-Zinbiel superalgebra).} Let $\mathcal A$ be a super vector space together with an even bilinear operation
\[
\star:\mathcal A\times \mathcal A\to \mathcal A.
\]
and an even linear map \[
\alpha:\mathcal A\to\mathcal A
\]
$(\mathcal A,\star,\alpha)$ is called a \textbf{Hom-Zinbiel superalgebra} if
\begin{equation}\label{cond-H-zinb-sup}
\alpha(x)\star (y\star z)
=
(x\star y)\star \alpha(z)
+(-1)^{|x||y|}
(y\star x)\star \alpha(z),
\qquad \forall x,y,z\in\mathcal H(\mathcal A).
\end{equation}
\end{df}
\begin{prop}\label{Hom-Zinb-to-Hom-ss}
Let $\mathcal A$ be a $\mathbb Z_2$-graded vector space equipped with an even bilinear map $\star:\mathcal A\times\mathcal A\to\mathcal A$. Define a bilinear map $\mu_\star:\mathcal A\times\mathcal A \to\mathcal A$ by
\begin{equation}\label{comp-comm-hom-ass-of-Hom-Zinb}
\mu_\star(x,y)=x\star y+(-1)^{|x||y|}y\star x,\;\forall x,y\in\mathcal H(\mathcal A). 
\end{equation}
Then the following conditions are equivalent:  
\begin{enumerate}
\item $(\mathcal A,\star,\alpha)$ is a Hom-Zinbiel superalgebra.
\item $(\mathcal A,\mu,\alpha)$ is a super-commutative Hom-associative superalgebra and $(\mathcal A,\mathfrak L_\star,\alpha)$ is a representation of  $(\mathcal A,\mu,\alpha)$.
\item There is a super-commutative Hom-associative superalgebra structure on $\mathcal A\oplus\mathcal A$ defined by
\begin{eqnarray}
(\alpha\oplus\alpha)(x,a)&=&(\alpha(x),\alpha(a)),\label{struc-supcomm-H-ass1}\\
\mu_{\mathcal A\oplus\mathcal A}((x,a),(y,b))&=&(x\star y+(-1)^{|x||y|}y\star x,x\star b+(-1)^{|a||y|}y\star a),\label{struc-supcomm-H-ass2}
\end{eqnarray}
for all $x,y,a,b\in\mathcal H(\mathcal A)$.
\end{enumerate}
$(\mathcal A,\mu,\alpha)$ is called the \textbf{sub-adjacent super-commutative Hom-associative superalgebra} of $(\mathcal A,\star,\alpha)$ and $(\mathcal A,\star,\alpha)$ is called a \textbf{compatible Hom-Zinbiel superalgebra} of $(\mathcal A,\mu,\alpha)$.
\end{prop}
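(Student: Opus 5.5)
We prove the cycle of implications $(1)\Rightarrow(2)\Rightarrow(3)\Rightarrow(1)$, reading $\mu$ in (2) as $\mu_\star$ and $\mathfrak L_\star(x)(y)=x\star y$. Throughout we use one rewriting of \eqref{cond-H-zinb-sup}: its right-hand side is $\mu_\star(x,y)\star\alpha(z)$. So $(\mathcal A,\star,\alpha)$ is a Hom-Zinbiel superalgebra if and only if
\begin{equation*}
\mathfrak L_\star(\mu_\star(x,y))\,\alpha=\mathfrak L_\star(\alpha(x))\,\mathfrak L_\star(y)\quad\text{on }\mathcal A,\;\forall x,y\in\mathcal H(\mathcal A).
\end{equation*}
This is exactly condition \eqref{rAs2} for the triple $(\mathcal A,\mathfrak L_\star,\alpha)$.

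\textbf{$(1)\Rightarrow(2)$.} Supercommutativity of $\mu_\star$ is immediate from its definition. The representation property of $\mathfrak L_\star$ is the rewriting above. The real work is Hom-associativity of $\mu_\star$, which we expect to be the main obstacle. We expand both
\[
\mu_\star(\mu_\star(x,y),\alpha(z))\quad\text{and}\quad \mu_\star(\alpha(x),\mu_\star(y,z))
\]
into four terms each. Each term has one of the shapes $\alpha(a)\star(b\star c)$, $\alpha(a)\star(c\star b)$ or $(a\star b)\star\alpha(c)$, up to signs. We convert every term of the form $(\text{sym}(a,b))\star\alpha(c)$ into $\alpha(a)\star(b\star c)$ using the identity. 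In the first expression, the two terms $\mu_\star(x,y)\star\alpha(z)$ collapse to $\alpha(x)\star(y\star z)$. The remaining terms $\alpha(z)\star(x\star y)$ and its partner with $x,y$ swapped, carrying the sign $(-1)^{|z|(|x|+|y|)}$, are left as they are. In the second expression, the term $\mu_\star(y,z)\star\alpha(x)$ becomes $\alpha(y)\star(z\star x)$.

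We then match terms. The six terms of the form $\alpha(\cdot)\star(\cdot\star\cdot)$ should cancel pairwise once we apply the identity once more to each symmetric pair: $\alpha(z)\star(x\star y)$ and $\alpha(y)\star(z\star x)$ etc. We must track the Koszul signs $(-1)^{|x||y|}$ carefully. If some pairs fail to cancel directly, we symmetrize using the supercommutativity just established.

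\textbf{$(2)\Rightarrow(3)$.} The structure in (3) is precisely the semi-direct product $\mathcal A\ltimes_{\mathfrak L_\star}\mathcal A$ of \eqref{Hom-ass-direct-sum1}--\eqref{Hom-ass-direct-sum2} with $V=\mathcal A$, $\eta=\mathfrak L_\star$ and $\beta=\alpha$. The recalled equivalence between representations and semi-direct products then gives (3).

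\textbf{$(3)\Rightarrow(1)$.} We take elements $(x,0),(y,0),(0,c)$ and write out Hom-associativity in the second component. The left side gives $\mu_\star(x,y)\star\alpha(c)$ and the right side gives $\alpha(x)\star(y\star c)$, up to the ordering conventions of \eqref{struc-supcomm-H-ass2}. This is the rewritten Hom-Zinbiel identity, so (1) follows.
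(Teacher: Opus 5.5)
Your cycle $(1)\Rightarrow(2)\Rightarrow(3)\Rightarrow(1)$ is a sound plan; the paper itself only writes ``Straightforward.'' Two of your steps are correct as stated: $(2)\Rightarrow(3)$, which identifies (3) with the semi-direct product $\mathcal A\ltimes_{\mathfrak L_\star}\mathcal A$, and $(3)\Rightarrow(1)$, which tests Hom-associativity on $(x,0),(y,0),(0,c)$.

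The weak point is the one you flagged yourself: Hom-associativity of $\mu_\star$ is not actually closed, and the cancellation you name is wrong. By the Hom-Zinbiel identity, $\alpha(z)\star(x\star y)=\mu_\star(z,x)\star\alpha(y)$, while $\alpha(y)\star(z\star x)=\mu_\star(y,z)\star\alpha(x)$. These have different twisted right factors, so they cannot cancel against each other, and supercommutativity of $\mu_\star$ does not rescue that pairing.

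The missing observation is a swap rule. It comes from using the identity twice together with supercommutativity of $\mu_\star$:
\[
\alpha(a)\star(b\star c)=\mu_\star(a,b)\star\alpha(c)=(-1)^{|a||b|}\mu_\star(b,a)\star\alpha(c)=(-1)^{|a||b|}\alpha(b)\star(a\star c).
\]
So the terms must be paired by their innermost right argument. After the common term $\alpha(x)\star(y\star z)$ drops out, you get
\begin{align*}
&\mu_\star(\mu_\star(x,y),\alpha(z))-\mu_\star(\alpha(x),\mu_\star(y,z))\\
&=\Bigl[(-1)^{|z|(|x|+|y|)}\alpha(z)\star(x\star y)-(-1)^{|y||z|}\alpha(x)\star(z\star y)\Bigr]\\
&\quad+\Bigl[(-1)^{|z|(|x|+|y|)+|x||y|}\alpha(z)\star(y\star x)-(-1)^{|x|(|y|+|z|)}\alpha(y)\star(z\star x)\Bigr].
\end{align*}
Apply the swap rule with $(a,b,c)=(z,x,y)$ in the first bracket and $(a,b,c)=(z,y,x)$ in the second. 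In each case the two terms become equal, signs included, so both brackets vanish. With this correction your argument is complete.
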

\begin{proof}
Straightforward.    
\end{proof}
\begin{df}
An even super-skew-symmetric bilinear form $\mathfrak B$ on a super-commutative Hom-associative superalgebra $(\mathcal A,\mu,\alpha)$ is
called a \textbf{Connes cocycle} if it satisfies
\begin{equation}\label{def-con-cocy-supcomm-H-ass-sup}
 \mathfrak B(\mu(x,y),\alpha(z))+(-1)^{|x|(|y|+|z|)}\mathfrak B(\mu(y,z),\alpha(x))+(-1)^{|z|(|x|+|y|)}\mathfrak B(\mu(z,x),\alpha(y))=0,     
 \end{equation}
 for any homogeneous $x,y,z\in\mathcal A$.
\end{df}
\begin{prop}\label{comp-H-ant-Zinb-from-Connes-Coc}
Let $\mathfrak B$ be a nondegenerate Connes cocycle on a multiplicative super-commutative
Hom-associative superalgebra $(\mathcal A,\mu,\alpha)$. Then there is a compatible
Hom-Zinbiel superalgebra $(\mathcal A,\star,\alpha)$ of $(\mathcal A,\mu,\alpha)$ defined by
\begin{equation}\label{h-Zinb-via-Connes-coc}
\mathfrak B(\alpha(x)\star\alpha(y),\alpha^2(z))=(-1)^{|x||y|}\mathfrak B(\alpha(y),\mu(x,z),\qquad \forall\,x,y,z\in\mathcal H(\mathcal A).
\end{equation}

Conversely, let $(\mathcal A,\star,\alpha)$ be a Hom-Zinbiel superalgebra and
$(\mathcal A,\mu,\alpha)$ be the sub-adjacent super-commutative Hom-associative superalgebra.
Then the natural nondegenerate super-skew-symmetric bilinear form
$\mathfrak B_{d}$ defined by Eq. \eqref{natur-biln-form-H-assoc-sup} is a Connes cocycle on the super-commutative Hom-associative superalgebra
$\mathcal A\ltimes_{-\mathfrak L_{\mu}^{\star}}\mathcal A^{*}$.    
\end{prop}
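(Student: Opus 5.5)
Both directions go through the characterization of Proposition \ref{Hom-Zinb-to-Hom-ss}: a product $\star$ gives a compatible Hom-Zinbiel superalgebra of $(\mathcal A,\mu,\alpha)$ exactly when $\mu=\mu_\star$ and $(\mathcal A,\mathfrak L_\star,\alpha)$ is a representation of $(\mathcal A,\mu,\alpha)$, i.e.
\[
\mathfrak L_\star(\mu(x,y))\alpha=\mathfrak L_\star(\alpha(x))\mathfrak L_\star(y).
\]
Throughout I assume $\alpha$ is invertible. This is implicit in the statement, since \eqref{h-Zinb-via-Connes-coc} only fixes $\alpha(x)\star\alpha(y)$, and the dual representations $(\alpha^{-1})^\ast$ also need it.

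\textbf{Direct part.}
\begin{enumerate}
\item \emph{Well-definedness.} Nondegeneracy of $\mathfrak B$ together with bijectivity of $\alpha$ and $\alpha^2$ shows that \eqref{h-Zinb-via-Connes-coc} determines a unique even bilinear product $\star$. Equivalently,
\[
\mathfrak B(u\star v,\alpha(w))=(-1)^{|u||v|}\mathfrak B(v,\mu(\alpha^{-1}u,\alpha^{-1}w)).
\]
\item \emph{Recovering $\mu$.} Compute $\mathfrak B(\alpha x\star\alpha y+(-1)^{|x||y|}\alpha y\star\alpha x,\alpha^2 z)$. By the definition this equals
\[
(-1)^{|x||y|}\mathfrak B(\alpha y,\mu(x,z))+\mathfrak B(\alpha x,\mu(y,z)).
\]
Rewrite both terms using super-skew-symmetry and super-commutativity, then apply the cocycle identity \eqref{def-con-cocy-supcomm-H-ass-sup}. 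I expect this to give $\mathfrak B(\alpha(\mu(x,y)),\alpha^2 z)$, possibly up to a sign. Multiplicativity is used here to replace $\mu(\alpha x,\alpha y)$ by $\alpha\mu(x,y)$, and this is also where $\alpha$ must be moved onto the correct slot. Nondegeneracy then yields $\mu_\star=\mu$ on $\alpha(\mathcal A)=\mathcal A$.
\item \emph{The representation identity.} Pair both sides with $\alpha^3(w)$, after substituting $\alpha$-images. Unwinding the definition twice turns each side into an expression $\mathfrak B(\cdot,\mu(\cdot,\cdot))$ with nested $\mu$'s. These agree by Hom-associativity and super-commutativity of $\mu$, together with multiplicativity to commute $\alpha$ through $\mu$.
\end{enumerate}

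\textbf{Main obstacle.} The real difficulty is bookkeeping, not ideas. The Koszul signs, and the powers of $\alpha$ in each slot, must match exactly at every step of the two computations above. I would first carry out the ungraded case with $\alpha=\mathrm{id}$ to fix the pattern, then insert the signs and powers of $\alpha$.

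There is a further worry about the form's symmetry type. With $\mathfrak B$ super-skew-symmetric, step 2 may produce a sign or a vanishing rather than $\mu$. I would check early whether the argument in fact needs supersymmetry, which would match the ``anti'' setting described in the introduction, and adjust accordingly.

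\textbf{Converse.}
\begin{enumerate}
\item Given a Hom-Zinbiel superalgebra $(\mathcal A,\star,\alpha)$, Proposition \ref{Hom-Zinb-to-Hom-ss} shows that $(\mathcal A,\mu,\alpha)$ is super-commutative Hom-associative. By Theorem \ref{dual-rep-H-ass-sup}, the semidirect product $\mathcal A\ltimes_{-\mathfrak L_\mu^\star}\mathcal A^\ast$ is defined as in \eqref{Hom-ass-direct-sum1}, with twisting map $\alpha\oplus(\alpha^{-1})^\ast$.
\item Verify the cocycle identity for $\mathfrak B_d$ on elements $(x,a^\ast)$, $(y,b^\ast)$, $(z,c^\ast)$. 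By linearity and evenness, only the terms with exactly one coadjoint component survive. Each such term is a pairing $\langle a^\ast,\cdot\rangle$ built from $-\mathfrak L^\star_\mu$ and \eqref{eq:1.3}.
\item The three cyclic contributions cancel by super-commutativity of $\mu$, once the powers of $\alpha$ coming from $\mathfrak L^\star$ and $(\alpha^{-1})^\ast$ are tracked. Multiplicativity is used to move $\alpha$ across $\mu$.
\item Nondegeneracy of $\mathfrak B_d$ is immediate. Its symmetry type follows from the form of \eqref{natur-biln-form-H-assoc-sup} together with the graded sign convention for the pairing between $\mathcal A$ and $\mathcal A^\ast$.
\end{enumerate}
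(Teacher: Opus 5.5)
\textbf{Gap 1: the direct part.} Your direct part breaks at exactly the step you filed under bookkeeping, and it is not bookkeeping.

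The sign is fine. Super-skew-symmetry and super-commutativity turn $(-1)^{|x||y|}\mathfrak B(\alpha(y),\mu(x,z))+\mathfrak B(\alpha(x),\mu(y,z))$ into $-(-1)^{|x|(|y|+|z|)}\mathfrak B(\mu(y,z),\alpha(x))-(-1)^{|z|(|x|+|y|)}\mathfrak B(\mu(z,x),\alpha(y))$. By \eqref{def-con-cocy-supcomm-H-ass-sup} this is $\mathfrak B(\mu(x,y),\alpha(z))$, so skew-symmetry is the right type and your supersymmetry worry is unfounded.

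The powers of $\alpha$ are the problem. You get $\mathfrak B(\mu_\star(\alpha(x),\alpha(y)),\alpha^2(z))=\mathfrak B(\mu(x,y),\alpha(z))$. Compatibility, however, needs $\mathfrak B(\mu(\alpha(x),\alpha(y)),\alpha^2(z))=\mathfrak B(\alpha(\mu(x,y)),\alpha^2(z))$, i.e. $\mathfrak B(\alpha(a),\alpha(b))=\mathfrak B(a,b)$. Multiplicativity only moves $\alpha$ through $\mu$ inside one slot. It cannot remove a power of $\alpha$ from both slots of $\mathfrak B$. Your step 3 has the same defect: the two sides of the representation identity reduce to $\mathfrak B(\alpha(z),\alpha(Q))$ and $\mathfrak B(z,Q)$ for one and the same $Q$.

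This is a real obstruction. Take the following data:
\begin{itemize}
\item $\mathcal A$ purely even with basis $f_1,\dots,f_4$;
\item $\mu(f_1,f_1)=2f_2$ and $\mu(f_1,f_4)=\mu(f_4,f_1)=2f_3$, all other products zero;
\item $\alpha=\mathrm{diag}(1,1,2,2)$;
\item $\mathfrak B(f_1,f_3)=\mathfrak B(f_2,f_4)=1$, extended skew-symmetrically.
\end{itemize}
All triple products vanish, $\alpha$ is multiplicative, and $\mathfrak B$ is a nondegenerate Connes cocycle. Yet $\mathfrak B(\alpha\cdot,\alpha\cdot)=2\mathfrak B$, and the defining formula gives $f_1\star f_1=\tfrac12 f_2$. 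Hence $\mu_\star(f_1,f_1)=f_2\neq\mu(f_1,f_1)$.

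The missing hypothesis is \eqref{cond-mult-bil-form}. The paper's proof uses it silently when it replaces $T(\alpha(x))$ by $(\alpha^{-1})^*T(x)$. That identity is equivalent to \eqref{cond-mult-bil-form}, and it is also what makes $T^{-1}$ intertwine $(\alpha^{-1})^*$ with $\alpha$. Once you add this hypothesis, your direct verification via Proposition \ref{Hom-Zinb-to-Hom-ss} goes through. It is then a more elementary alternative to the paper's route through the invertible $\mathcal O$-operator $T^{-1}$.

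\textbf{Gap 2: the converse.} Your converse also fails as planned. It never uses the Zinbiel product, and super-commutativity of $\mu$ alone does not make the three cyclic terms cancel.

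Already for $\alpha=\mathrm{id}$ and no odd part, take the module $-\mathfrak L_\mu^\star$. The terms pairing $c^*$ with $\mu(x,y)$ then add up to $3\langle c^*,\mu(x,y)\rangle$ for $\mathfrak B_d$ as written in \eqref{natur-biln-form-H-assoc-sup}, and to $-\langle c^*,\mu(x,y)\rangle$ for its skew version. So $\mathfrak B_d$ is not a Connes cocycle on $\mathcal A\ltimes_{-\mathfrak L_\mu^\star}\mathcal A^*$ unless $\mu=0$.

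What works is the dual of the Zinbiel module, $-\mathfrak L_\star^\star$ (compare the converse of Theorem \ref{from-h-ass-to-H-anti-Zinb-via-comm-Connes-coc}), paired with the skew form $\langle x,b^*\rangle-\langle a^*,y\rangle$ (with the appropriate Koszul sign). There the $c^*$-terms cancel exactly because $\mu(x,y)=x\star y+(-1)^{|x||y|}y\star x$, together with multiplicativity. Likewise, your step 4 cannot read super-skew-symmetry off \eqref{natur-biln-form-H-assoc-sup}, which is symmetric as written.
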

\begin{proof}
Define a linear map $T: \mathcal A\rightarrow \mathcal A^{*}$ by \begin{equation}\label{eq:Hom-ass-T} \langle T(x),y\rangle=\mathfrak{B}(x,y),\qquad \forall x,y\in \mathcal H(\mathcal A). \end{equation} Since $\mathfrak B$ is nondegenerate, the map $T$ is invertible. Let $a^{*},b^{*}\in \mathcal A^{*}$. Then there exist $x,y\in\mathcal H(\mathcal A)$ such that $a^{*}=T(x)$ and $b^{*}=T(y)$. For any $z\in\mathcal H(\mathcal A)$, we compute  $$\mathfrak B(\mu(x,y),\alpha(z)) =\langle T(\mu(x,y)),\alpha(z)\rangle =\langle T(\mu(T^{-1}(a^{*}),T^{-1}(b^{*}))),\alpha(z)\rangle.$$
By Eq.\eqref{dual-adjoint-rep}, we have
\begin{eqnarray*}
\mathfrak B(\mu(y,z),\alpha(x)) &=&-(-1)^{|x|(|y|+|z|)}\langle T(\alpha(x)),\mu(y,z)\rangle =(-1)^{|x||z|}\langle \mathfrak{L}^{\star}(\alpha(y))T(\alpha(x)),\alpha^2(z)\rangle\\
&=&(-1)^{|x||z|}\langle \mathfrak{L}^{\star}(\alpha(y))(\alpha^{-1})^*T(x),\alpha^2(z)\rangle\\
&=&(-1)^{|x||z|}\langle (\alpha^{-1})^*\mathfrak{L}^{\star}(y)T(x),\alpha^2(z)\rangle\\
&=&(-1)^{|x||z|}\langle (\mathfrak{L}^{\star}(y)T(x),\alpha(z)\rangle\\
&=&(-1)^{|a||z|}\langle \mathfrak{L}^{\star}(T^{-1}(b^{*}))a^{*},\alpha(z)\rangle.
\end{eqnarray*}
Similarly we have
$$\mathfrak B(\mu(z,x),\alpha(y))=(-1)^{|x||z|}\mathfrak B(\mu(x,z),\alpha(y))=(-1)^{|z|(|a|+|b|)}\langle \mathfrak{L}^{\star}(T^{-1}(a^{*}))b^{*},\alpha(z)\rangle.$$ 
   
Since $\mathfrak B$ is a Connes cocycle on the Hom-associative superalgebra $(\mathcal A,\mu,\alpha)$, we have \[ \mathfrak B(\mu(x,y),\alpha(z)) +(-1)^{|x|(|y|+|z|)}\mathfrak B(\mu(y,z),\alpha(x)) +(-1)^{|z|(|x|+|y|)}\mathfrak B(\mu(z,x),\alpha(y))=0 . \] Therefore \[ \mu(T^{-1}(a^{*}),T^{-1}(b^{*})) = T^{-1}\big(-\mathfrak{L}^{\star}(T^{-1}(a^{*}))(b^{*}) -(-1)^{|a||b|} \mathfrak{L}^{\star}(T^{-1}(b^{*}))(a^{*})  \big). \] Thus $T^{-1}$ is an invertible $\mathcal O$-operator of the Hom-associative superalgebra $(\mathcal A,\mu,\alpha)$ associated with the representation $(\mathcal A^{*},-\mathfrak{L}^{\star},(\alpha^{-1})^{*})$. Then, by using Corollary \ref{H-Zinb-to-Hom-ass-RB}, there exists a compatible Hom-Zinbiel superalgebra structure $\star$ on $(\mathcal A,\mu,\alpha)$ defined by \begin{equation}\label{eq:Hom-assoc-product} x\star y=T^{-1}\big(-\mathfrak{L}^{\star}(x)T(y)\big),\qquad \forall x,y\in\mathcal H(\mathcal A). \end{equation} Finally, for any $x,y,z\in\mathcal H(\mathcal A)$, we obtain 
    \begin{eqnarray*}
     \mathfrak B(\alpha(x)\star\alpha( y),\alpha^2(z)) &=&\langle T(\alpha(x)\star\alpha( y)),\alpha^2(z)\rangle\\&=&-\langle \mathfrak{L}^{\star}(\alpha(x))T(\alpha(y)),\alpha^2(z)\rangle\\&=&(-1)^{|x||y|}\langle T(\alpha(y)),\mu(x,z)\rangle \\&=&(-1)^{|x||y|}\mathfrak B(\alpha(y),\mu(x,z)). 
    \end{eqnarray*}
    Hence the Hom-Zinbiel product $\star$ satisfies \[ \mathfrak B(\alpha(x)\star\alpha( y),\alpha^2(z))=(-1)^{|x||y|}\mathfrak B(\alpha(y),\mu(x,z)),\; \forall x,y,z\in\mathcal H(\mathcal A). \] Which gives the result. The converse follows by a direct computation.
\end{proof}
\begin{cor}\label{Cor-comp-mult-H-ant-Zinb-from-Connes-Coc}
 Let $\mathfrak B$ be a nondegenerate Connes cocycle on a multiplicative super-commutative
Hom-associative superalgebra $(\mathcal A,\mu,\alpha)$ satisfying
\begin{equation}\label{cond-mult-bil-form}
\mathfrak B(\alpha(x),\alpha(y))=\mathfrak B(x,y),\;\forall x,y\in\mathcal H(\mathcal A).  
\end{equation}
 Then there is a compatible
Hom-Zinbiel superalgebra $(\mathcal A,\star,\alpha)$ of $(\mathcal A,\mu,\alpha)$ defined by
\begin{equation}\label{h-Zinb-via-Connes-coc-mult-Hom-assoc}
\mathfrak B(x\star y,\alpha(z))=(-1)^{|x||y|}\mathfrak B(\alpha(y),\mu(x,z),\qquad \forall\,x,y,z\in\mathcal H(\mathcal A).
\end{equation}

Conversely, let $(\mathcal A,\star,\alpha)$ be a Hom-Zinbiel superalgebra and
$(\mathcal A,\mu,\alpha)$ be the sub-adjacent super-commutative Hom-associative superalgebra.
Then the natural nondegenerate super-skew-symmetric bilinear form
$\mathfrak B_{d}$ defined by Eq. \eqref{natur-biln-form-H-assoc-sup} is a Connes cocycle on the commutative superassociative algebra
$\mathcal A\ltimes_{-\mathfrak L_{\mu}^{\star}}\mathcal A^{*}$.   
\end{cor}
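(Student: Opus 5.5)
The plan is to derive this corollary from Proposition \ref{comp-H-ant-Zinb-from-Connes-Coc}, using the invariance condition \eqref{cond-mult-bil-form} to strip one power of $\alpha$ from each argument. Before doing so, we assume (as Theorem \ref{dual-rep-H-ass-sup} and the proof of the proposition implicitly do) that $\alpha$ is invertible; nondegeneracy of $\mathfrak B$ together with \eqref{cond-mult-bil-form} gives injectivity of $\alpha$ directly.

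\textbf{Step 1: obtain the product.} Proposition \ref{comp-H-ant-Zinb-from-Connes-Coc} already provides a compatible Hom-Zinbiel superalgebra $(\mathcal A,\star,\alpha)$ of $(\mathcal A,\mu,\alpha)$. It is given explicitly by $x\star y=T^{-1}(-\mathfrak L^\star(x)T(y))$, where $\langle T(x),y\rangle=\mathfrak B(x,y)$, and it satisfies \eqref{h-Zinb-via-Connes-coc}.

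\textbf{Step 2: pass to the simplified formula.} We first check that $\star$ is multiplicative, i.e. $\alpha(x)\star\alpha(y)=\alpha(x\star y)$. From \eqref{cond-mult-bil-form} we get $T\circ\alpha=(\alpha^{-1})^*\circ T$. The computation in the proof of the proposition shows $\mathfrak L^\star(\alpha(y))(\alpha^{-1})^*=(\alpha^{-1})^*\mathfrak L^\star(y)$. Combining these two relations yields $T(\alpha(x)\star\alpha(y))=(\alpha^{-1})^*T(x\star y)=T(\alpha(x\star y))$, and injectivity of $T$ gives multiplicativity.

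Now start from the left side of \eqref{h-Zinb-via-Connes-coc}. We have $\mathfrak B(\alpha(x)\star\alpha(y),\alpha^2(z))=\mathfrak B(\alpha(x\star y),\alpha(\alpha(z)))=\mathfrak B(x\star y,\alpha(z))$ by \eqref{cond-mult-bil-form}. This gives exactly \eqref{h-Zinb-via-Connes-coc-mult-Hom-assoc}.

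\textbf{Alternative route.} Instead of Step 2, one can define $\star$ directly by \eqref{h-Zinb-via-Connes-coc-mult-Hom-assoc}, which is well defined by nondegeneracy. Applying \eqref{cond-mult-bil-form} then shows it coincides with the product of the proposition, so the Hom-Zinbiel identity and compatibility are inherited.

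\textbf{Step 3: the converse.} The converse statement is literally the converse in Proposition \ref{comp-H-ant-Zinb-from-Connes-Coc}, so it needs no new argument. One only notes that $\mathfrak B_d$ satisfies the analogue of \eqref{cond-mult-bil-form} for $\alpha\oplus(\alpha^{-1})^*$, which is immediate from the definition \eqref{natur-biln-form-H-assoc-sup}.

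\textbf{Main obstacle.} The hard part is the multiplicativity check in Step 2. It requires a careful handling of the commutation $\mathfrak L^\star(\alpha(y))(\alpha^{-1})^*=(\alpha^{-1})^*\mathfrak L^\star(y)$, which uses multiplicativity of $\mu$, and of the sign bookkeeping inherited from \eqref{n-dual-rep-H-ass-sup}.
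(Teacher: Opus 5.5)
Your argument for the first half is correct and is essentially the paper's. The paper only says the proof is similar to that of Proposition~\ref{comp-H-ant-Zinb-from-Connes-Coc}, using \eqref{cond-mult-bil-form}. In other words, it reruns the $T^{-1}$-construction and evaluates $\mathfrak B(x\star y,\alpha(z))=-\langle \mathfrak L^\star(x)T(y),\alpha(z)\rangle$ directly, moving the $\alpha$'s with \eqref{cond-mult-bil-form} and the multiplicativity of $\mu$. You instead keep the proposition's formula and insert the lemma $\alpha(x)\star\alpha(y)=\alpha(x\star y)$. Both routes rest on the same two identities: $T\circ\alpha=(\alpha^{-1})^*\circ T$, which is equivalent to \eqref{cond-mult-bil-form}, and $\mathfrak L^\star(\alpha(y))(\alpha^{-1})^*=(\alpha^{-1})^*\mathfrak L^\star(y)$. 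Your version has the small bonus of showing that $\star$ is multiplicative.

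Three remarks on this half:
\begin{enumerate}
\item The proposition's own proof already uses $T\circ\alpha=(\alpha^{-1})^*\circ T$. It appears in the step replacing $T(\alpha(x))$ by $(\alpha^{-1})^*T(x)$, and in the $\mathcal O$-operator axiom $T^{-1}\circ(\alpha^{-1})^*=\alpha\circ T^{-1}$. So citing the proposition as a black box in Step~1 is sound precisely because you are under the corollary's hypotheses.
\item Your injectivity remark removes the regularity assumption altogether, since all spaces are finite-dimensional.
\item Your ``alternative route'' is not independent of Step~2. Condition \eqref{cond-mult-bil-form} alone only gives $x\star' y=\alpha^{-1}(\alpha(x)\star\alpha(y))$. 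To conclude, you need either the multiplicativity of $\star$, or the observation that $\star'$ is the transport of $\star$ by the automorphism $\alpha$ of $(\mathcal A,\mu)$, hence itself a compatible Hom-Zinbiel product.
\end{enumerate}

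For the converse you do exactly what the paper does, namely defer to the proposition. Be aware, though, that the claim as written cannot hold.
\begin{enumerate}
\item The form $\mathfrak B_d$ of Eq.~\eqref{natur-biln-form-H-assoc-sup} is symmetric, not super-skew-symmetric, so it is not a Connes cocycle in the sense of the definition.
\item The semidirect product $\mathcal A\ltimes_{-\mathfrak L_\mu^\star}\mathcal A^*$ does not involve $\star$ at all. On it, $\mathfrak B_d$ is the invariant form recalled in Section~\ref{Sec2}. Already for $\alpha=\mathrm{id}$ and even elements, its cyclic sum equals $3\,\mathfrak B_d(\mu(X,Y),Z)$, which is nonzero as soon as $\mu\neq 0$.
\end{enumerate}
The intended statement, as in the classical Zinbiel/Connes-cocycle correspondence, uses the skew pairing $\langle x,b^*\rangle-\langle a^*,y\rangle$ on the semidirect product by the dual representation of $(\mathcal A,\mathfrak L_\star,\alpha)$. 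There, for $\alpha=\mathrm{id}$, the cocycle identity on $(x,0),(y,0),(0,c^*)$ reduces to $\langle c^*,\mu(x,y)-x\star y-y\star x\rangle=0$, which is exactly compatibility. So ``no new argument'' is only true relative to the proposition; neither your proposal nor the paper actually proves this half.
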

\begin{proof}
It is similar to the proof of Proposition \ref{comp-H-ant-Zinb-from-Connes-Coc}, by using Eq. \eqref{cond-mult-bil-form}.    
\end{proof}

\subsection{Hom-Lie superalgebras }
\begin{df}(\cite{Faouzi-Abdenacer})
A \textbf{Hom-Lie superalgebra} is a triple $(\mathcal{A}, [\cdot,\cdot],\alpha)$ consisting of a $\mathbb{Z}_2$-graded vector space $\mathcal{A}$, an even bilinear map $[\cdot,\cdot] : \mathcal{A}\otimes \mathcal{A} \longrightarrow \mathcal{A},~~( \;[\mathcal{A}_i,\mathcal{A}_j]\subseteq \mathcal{A}_{i+j},
~~\forall~~i,j\in \mathbb{Z}_2\;)$ and an even linear map  $\alpha:\mathcal{A}\rightarrow\mathcal{A}$  satisfying:
\begin{eqnarray}
 \label{H-skwesym}
 &&[x,y] = -(-1)^{|x||y|}[y,x]\quad \text{( super-skew-symmetry)},\\
\label{H-sJ}
&&  [\alpha(x),[y,z]]=[[x,y],\alpha(z)]+(-1)^{|x||y|}[\alpha(y),[x,z]] \quad \text{(\;Hom super-Jacobi identity\;)},
\end{eqnarray}
$\forall\ x,y,z \in \mathcal{H}(\mathcal{A})$.
A Hom-Lie superalgebra $(\mathcal A,[\cdot,\cdot],\alpha)$ is called \textbf{multiplicative} if $\alpha$ is a Hom-algebras morphism, that is, $\alpha([x,y])=[\alpha(x),\alpha(y)],\;\forall x,y\in\mathcal H(\mathcal A)$ and called \textbf{regular} if $\alpha$ is bijective.
\end{df}
\begin{exa}(\cite{O-N})
Let $\mathcal{A}=\mathcal{A}_{\overline{0}}\oplus \mathcal{A}_{\overline{1}}$ be a $2$-dimensional $\mathbb{Z}_2$-
graded vector space, where $\mathcal{A}_{\overline{0}}=<e_1>$
 and $\mathcal{A}_{\overline{1}}=<e_2>$. We consider the nonzero product given by
$$[e_2,e_2]=2\lambda e_1,$$
and the even linear map $\alpha:\mathcal{A}\rightarrow\mathcal{A}$ defined on the basis of $\mathcal{A}$ by $$\alpha(e_1)=\lambda e_1\;\text{and}\;\alpha(e_2)=\lambda e_2,$$
where $\lambda\in\mathbb{C}$. The triplet $(\mathcal{A},[\cdot,\cdot],\alpha)$ is a Hom-Lie superalgebra.
\end{exa}
\begin{thm}\cite{Faouzi-Abdenacer}\label{Lie-twist-thm}
 Let $(\mathcal A,[\cdot,\cdot])$ be a Lie superalgebra and $\alpha:\mathcal A\to\mathcal A$ be an even algebra endomorphism. Define the linear map $[\cdot,\cdot]_\alpha:\mathcal A\otimes\mathcal A$ by
 \begin{equation}\label{exp-twist-Lie-sup}
  [x,y]_\alpha=[\alpha(x),\alpha(y)],\;\forall x,y\in\mathcal A.   
 \end{equation}
 Then, $(\mathcal A,[\cdot,\cdot]_\alpha,\alpha)$ is a multiplicative Hom-Lie superalgebra.
\end{thm}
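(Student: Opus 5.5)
We verify directly that $(\mathcal A,[\cdot,\cdot]_\alpha,\alpha)$ satisfies the two axioms in the definition of a Hom-Lie superalgebra, and then check multiplicativity. All three verifications reduce to the corresponding property of the untwisted Lie superalgebra $(\mathcal A,[\cdot,\cdot])$ together with the fact that $\alpha$ is an even algebra endomorphism, $\alpha([x,y])=[\alpha(x),\alpha(y)]$.

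\textbf{Evenness.} Since $\alpha$ preserves the grading, $|\alpha(x)|=|x|$ for homogeneous $x$. Hence $[\cdot,\cdot]_\alpha$ is an even bilinear map.

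\textbf{Super-skew-symmetry.} It is immediate from the super-skew-symmetry of $[\cdot,\cdot]$:
$$[x,y]_\alpha=[\alpha(x),\alpha(y)]=-(-1)^{|x||y|}[\alpha(y),\alpha(x)]=-(-1)^{|x||y|}[y,x]_\alpha.$$

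\textbf{Hom super-Jacobi identity.} This is the only step requiring a small computation. For homogeneous $x,y,z$ we expand each term of \eqref{H-sJ} with respect to $[\cdot,\cdot]_\alpha$, using the endomorphism property to pull $\alpha$ inside brackets. For the left-hand side,
$$[\alpha(x),[y,z]_\alpha]_\alpha=[\alpha^2(x),\alpha([\alpha(y),\alpha(z)])]=[\alpha^2(x),[\alpha^2(y),\alpha^2(z)]].$$
In the same way,
$$[[x,y]_\alpha,\alpha(z)]_\alpha=[[\alpha^2(x),\alpha^2(y)],\alpha^2(z)],\qquad [\alpha(y),[x,z]_\alpha]_\alpha=[\alpha^2(y),[\alpha^2(x),\alpha^2(z)]].$$
Put $X=\alpha^2(x)$, $Y=\alpha^2(y)$, $Z=\alpha^2(z)$, which have degrees $|x|,|y|,|z|$ respectively. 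The required identity then becomes
$$[X,[Y,Z]]=[[X,Y],Z]+(-1)^{|X||Y|}[Y,[X,Z]],$$
which is exactly the super-Jacobi identity of the Lie superalgebra $(\mathcal A,[\cdot,\cdot])$ written in its Leibniz (derivation) form. If the super-Jacobi identity is taken in its cyclic form instead, one passes to this form using super-skew-symmetry. Keeping track of the sign conventions here is the only potential obstacle, and it is routine.

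\textbf{Multiplicativity.} We compute
$$\alpha([x,y]_\alpha)=\alpha([\alpha(x),\alpha(y)])=[\alpha^2(x),\alpha^2(y)]=[\alpha(x),\alpha(y)]_\alpha.$$
Thus $\alpha$ is a morphism of $(\mathcal A,[\cdot,\cdot]_\alpha)$, and $(\mathcal A,[\cdot,\cdot]_\alpha,\alpha)$ is a multiplicative Hom-Lie superalgebra.
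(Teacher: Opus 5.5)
Your proof is correct and complete. Writing each twisted bracket as the original bracket of $\alpha^2$-images, via $\alpha([a,b])=[\alpha(a),\alpha(b)]$, reduces super-skew-symmetry, the Hom super-Jacobi identity \eqref{H-sJ} and multiplicativity to the corresponding properties of the untwisted Lie superalgebra. The paper itself gives no proof here (it quotes the result from the cited reference), and your direct verification is the standard twisting argument for this statement.
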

\begin{prop}(\cite{Faouzi-Abdenacer})\label{Hom-Lie-super-twist}
Let $(\mathcal{A},\mu,\alpha)$ be a super-commutative Hom-associative superalgebra, then $(\mathcal{A},[\cdot,\cdot]_\mu,\alpha)$ is a Hom-Lie superalgebra, where
$$[x,y]_\mu=\mu(x,y)-(-1)^{|x||y|}\mu(y,x),$$
for all $x,y\in\mathcal{H}(\mathcal{A})$.
\end{prop}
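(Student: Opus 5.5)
The plan is to exploit the super-commutativity of $\mu$ directly. The bracket $[\cdot,\cdot]_\mu$ is the supercommutator of $\mu$, and for a super-commutative product this supercommutator vanishes identically. So the first step is to substitute the defining relation $\mu(y,x)=(-1)^{|x||y|}\mu(x,y)$ into $[x,y]_\mu$. Since $(-1)^{|x||y|}(-1)^{|x||y|}=1$, this gives
\[
[x,y]_\mu=\mu(x,y)-(-1)^{|x||y|}(-1)^{|x||y|}\mu(x,y)=0,\qquad \forall x,y\in\mathcal H(\mathcal A).
\]
Before using this, I would check that $[\cdot,\cdot]_\mu$ is an even bilinear map. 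This holds because $\mu$ is even and bilinear, and the sign $(-1)^{|x||y|}$ only depends on the degrees of homogeneous elements. The definition then extends bilinearly from homogeneous elements.

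With $[\cdot,\cdot]_\mu\equiv 0$, both axioms of a Hom-Lie superalgebra hold trivially, for any even linear map $\alpha$. Super-skew-symmetry \eqref{H-skwesym} reads $0=-(-1)^{|x||y|}\cdot 0$. In the Hom super-Jacobi identity \eqref{H-sJ}, every term contains an inner bracket, so all three terms vanish. Hence $(\mathcal A,[\cdot,\cdot]_\mu,\alpha)$ is the abelian Hom-Lie superalgebra, and the Hom-associativity of $\mu$ is not even needed.

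For robustness, I would also record the argument that does not rely on super-commutativity, since this is the content of the cited result of \cite{Faouzi-Abdenacer} for general Hom-associative superalgebras. In that setting, super-skew-symmetry is immediate from the definition of the supercommutator. The Hom super-Jacobi identity is checked by expanding each of the three terms $[\alpha(x),[y,z]_\mu]_\mu$, $[[x,y]_\mu,\alpha(z)]_\mu$ and $(-1)^{|x||y|}[\alpha(y),[x,z]_\mu]_\mu$ into four $\mu$-monomials each. The twelve resulting monomials then regroup into a signed sum of Hom-associators $ass^\alpha_\mu$ over permutations of $(x,y,z)$, which vanishes by Hom-associativity.

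The only real obstacle lies in that general expansion: the bookkeeping of Koszul signs $(-1)^{|x||y|}$, $(-1)^{|x|(|y|+|z|)}$, and so on, when matching monomials into associators. In the super-commutative case of the statement this obstacle disappears entirely, so the proof reduces to the one-line vanishing computation above.
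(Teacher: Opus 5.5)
Your proof is correct, but it does not follow the route behind this statement. The paper gives no argument of its own; it quotes the result from \cite{Faouzi-Abdenacer}, where it is stated for an \emph{arbitrary} Hom-associative superalgebra. There it is obtained via Hom-Lie admissibility, i.e.\ by the associator regrouping you sketch as a backup.

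Your main argument instead takes the super-commutativity hypothesis literally. By the definition $\mu(x,y)=(-1)^{|x||y|}\mu(y,x)$, the subtracted term in $[x,y]_\mu$ is exactly $\mu(x,y)$. Hence $[\cdot,\cdot]_\mu\equiv 0$, and both axioms hold for any even $\alpha$.

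This gives a one-line proof. More importantly, it shows that the proposition as stated is degenerate: it only ever produces the abelian Hom-Lie superalgebra, and Hom-associativity plays no role. This strongly suggests that the word ``super-commutative'' is a slip and that the intended statement is the general one from the source.

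The general route costs sign bookkeeping but is the one with content. For the record, your regrouping works out exactly:
\[
\begin{aligned}
&[\alpha(x),[y,z]_\mu]_\mu-[[x,y]_\mu,\alpha(z)]_\mu-(-1)^{|x||y|}[\alpha(y),[x,z]_\mu]_\mu\\
&\quad=-ass_\mu^\alpha(x,y,z)+(-1)^{|y||z|}ass_\mu^\alpha(x,z,y)+(-1)^{|x||y|}ass_\mu^\alpha(y,x,z)\\
&\qquad-(-1)^{|x|(|y|+|z|)}ass_\mu^\alpha(y,z,x)-(-1)^{|z|(|x|+|y|)}ass_\mu^\alpha(z,x,y)\\
&\qquad+(-1)^{|x||y|+|x||z|+|y||z|}ass_\mu^\alpha(z,y,x).
\end{aligned}
\]
Each of the six associators absorbs exactly two of your twelve monomials, so the right-hand side vanishes by Hom-associativity, with no super-commutativity needed.
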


\begin{df}(\cite{Fawzi-Makhlouf-Saadaoui})
A \textbf{representation} of a Hom-Lie superalgebra $(\mathcal{A},[\cdot,\cdot],\alpha)$ on a $\mathbb{Z}_2$-graded vector space $V$ with respect to $\beta\in gl(V)$ is an even linear map $\rho:\mathcal{A}\to gl(V)$ such that for all $x,y\in\mathcal{H}(\mathcal A)$, the following equalities are satisfied:
\begin{eqnarray}
\rho(\alpha(x))\beta&=&\beta\rho(x),\label{repres-Hom-Lie1}\\
\rho([x,y])\beta&=&\rho(\alpha(x))\rho(y)-(-1)^{|x||y|}\rho(\alpha(y))\rho(x).\label{repres-Hom-Lie2}
\end{eqnarray}
This type of representation of a Hom-Lie superalgebra is typically denoted by $(V, \rho, \beta)$.
\end{df}
\begin{exa}
For any $x\in\mathcal{H}(\mathcal A)$, the linear map $\mathfrak{ad}:\mathcal A\to gl(\mathcal A);\;x\mapsto \mathfrak{ad}(x)$ (or $\mathfrak{ad}_x$) defined by
$$\mathfrak{ad}_x(y)=[x,y],\;\forall y\in\mathcal{H}(\mathcal A),$$
defines a representation on $\mathcal A$ called \textbf{adjoint representation}.
\end{exa}
\begin{prop}(\cite{O-N})\label{sumdirecthomLie}
Let $(\mathcal{A}, [\cdot,\cdot ],\alpha)$ be a Hom-Lie superalgebra and $(V,\beta)$ be a Hom-module. Let
$\rho:\mathcal{A} \to gl(V )$ be an even linear map. The triple $(V,\rho,\beta)$ is a representation of
$(\mathcal{A}, [\cdot,\cdot ],\alpha)$ if and only if the direct sum  $\mathcal{A}\oplus V $ of $\mathbb{Z}_2$-graded vector spaces $\mathcal{A}$ and $V$
turns into a Hom-Lie superalgebra by defining the linear map \eqref{Hom-ass-direct-sum2} and the following multiplication
\begin{equation}
    [x+u,y+v]_{\mathcal{A}\oplus V}=[x,y]+\rho(x)v-(-1)^{|y||u|}\rho(y)u\label{Hom-Lie-direct-sum},
\end{equation}
called semi-direct product of the Hom-Lie superalgebra $(\mathcal A,[\cdot,\cdot],\alpha)$ and $V$ which simply denoted by $\mathcal A\ltimes_{\rho} V$.
\end{prop}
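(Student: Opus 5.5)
The plan is to check both directions by expanding the Hom-super-Jacobi identity \eqref{H-sJ} on $\mathcal A\oplus V$ for the bracket \eqref{Hom-Lie-direct-sum} with twisting map $\alpha\oplus\beta$. The structure of the argument is a comparison, component by component, between that identity and the two axioms \eqref{repres-Hom-Lie1}--\eqref{repres-Hom-Lie2}. The bracket is even because $\rho$ is even, so only the super-skew-symmetry and the Jacobi identity require work.

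\textbf{Step 1: super-skew-symmetry.} For homogeneous $x+u$, $y+v$ of the same parity, we have $|x|=|u|$ and $|y|=|v|$. We compute
\[
-(-1)^{|x||y|}[y+v,x+u]_{\mathcal A\oplus V}=-(-1)^{|x||y|}[y,x]-(-1)^{|x||y|}\rho(y)u+\rho(x)v .
\]
This equals \eqref{Hom-Lie-direct-sum} by \eqref{H-skwesym}, so skew-symmetry holds with no assumption on $\rho$.

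\textbf{Step 2: the Jacobi identity.} By trilinearity, it suffices to test \eqref{H-sJ} on homogeneous triples in which each entry lies in $\mathcal A$ or in $V$. The cases with two or three entries in $V$ are trivial, since $[V,V]=0$ and $\rho(\cdot)$ lands in $V$, so every term vanishes. The case $(x,y,z)\in\mathcal A^3$ is \eqref{H-sJ} for $\mathcal A$ itself. This leaves three cases with exactly one entry in $V$.

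For the triple $(x,y,w)$ with $w\in V$, the identity reads
\[
\rho(\alpha(x))\rho(y)w=\rho([x,y])\beta(w)+(-1)^{|x||y|}\rho(\alpha(y))\rho(x)w ,
\]
which is exactly \eqref{repres-Hom-Lie2}. The triples $(x,w,z)$ and $(w,y,z)$ reduce, after applying super-skew-symmetry and tracking the signs $(-1)^{|w||\cdot|}$, to the same equation \eqref{repres-Hom-Lie2} with the variables renamed. So the direct sum is a Hom-Lie superalgebra if and only if \eqref{repres-Hom-Lie2} holds. This gives the converse direction as well: restricting the Jacobi identity to the triple $(x,y,w)$ recovers \eqref{repres-Hom-Lie2}.

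\textbf{Main obstacle: condition \eqref{repres-Hom-Lie1}.} The definition of a Hom-Lie superalgebra above contains no multiplicativity requirement, so \eqref{repres-Hom-Lie1} does not come out of the Jacobi computation. It has to be handled through the convention of the source \cite{O-N}. I would do this by reading the statement with both sides multiplicative: if $\alpha\oplus\beta$ is required to be a morphism of $[\cdot,\cdot]_{\mathcal A\oplus V}$, then evaluating $(\alpha\oplus\beta)[x,w]=[\alpha(x),\beta(w)]$ gives precisely $\beta\rho(x)=\rho(\alpha(x))\beta$. Conversely, \eqref{repres-Hom-Lie1} together with multiplicativity of $\alpha$ yields multiplicativity of the direct sum. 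I would state this convention explicitly and then conclude the equivalence.

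The remaining work is routine sign bookkeeping in the two mixed Jacobi cases. I would organise it by writing every term in the normal form $\rho(\cdot)\rho(\cdot)w$ or $\rho(\cdot)\beta(w)$ before comparing.
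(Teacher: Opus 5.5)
The paper does not prove this proposition. It is quoted from the literature without proof, so there is no argument of the paper's to compare yours with.

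Your proof is correct and is the standard one. Super-skew-symmetry of \eqref{Hom-Lie-direct-sum} is automatic, and every term of \eqref{H-sJ} vanishes when two or three entries lie in $V$. Each of the three mixed cases is equivalent to \eqref{repres-Hom-Lie2}. For instance, for the triple $(x,w,z)$ with $w\in V$, multiplying the identity by $-(-1)^{|w||z|}$ gives exactly $\rho(\alpha(x))\rho(z)w=\rho([x,z])\beta(w)+(-1)^{|x||z|}\rho(\alpha(z))\rho(x)w$.

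You can skip most of the remaining sign bookkeeping. For a super-skew-symmetric bracket, \eqref{H-sJ} is equivalent to the cyclic identity $\circlearrowleft_{x,y,z}(-1)^{|x||z|}[\alpha(x),[y,z]]=0$, which is invariant under cyclic permutations. The triples $(x,w,z)$ and $(w,y,z)$ are cyclic permutations of triples of type $(\mathcal A,\mathcal A,V)$, so all mixed cases are one case.

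Your concern about \eqref{repres-Hom-Lie1} is justified, and it is a defect of the statement as printed rather than of your argument. Here is a counterexample to the literal converse:
\begin{itemize}
\item take $\mathcal A=\mathbb K e$ and $V=\mathbb K v$, both even;
\item set $[e,e]=0$, $\alpha=0$, $\beta=\mathrm{id}_V$ and $\rho(e)=\mathrm{id}_V$.
\end{itemize}
Both sides of \eqref{repres-Hom-Lie2} vanish, so $\mathcal A\oplus V$ is a Hom-Lie superalgebra by your Step 2. Yet $\rho(\alpha(e))\beta=0\neq\beta\rho(e)$.

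This $\mathcal A$ is itself multiplicative. So assuming multiplicativity of $\mathcal A$ alone does not rescue the converse. One must require $\alpha\oplus\beta$ to be multiplicative on $\mathcal A\ltimes_\rho V$, exactly as in your reformulation. With that convention, your evaluation $(\alpha\oplus\beta)[x,w]=[\alpha(x),\beta(w)]$ and its converse complete the equivalence.
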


Let $\mathcal A$ and $V$ be two $\mathbb Z_2$-graded vector spaces. For an even linear map $\rho:\mathcal A\to gl(V)$, we set $\rho^\ast:\mathcal A\to V^\ast$ by
\begin{equation}\label{n-dual-rep-H-Lie-sup}
    \langle\rho^*(x)(\xi),u\rangle=-\langle\xi,\rho(x)(u)\rangle,\;\forall x\in\mathcal{H}(\mathcal A),\;u\in\mathcal{H}(V),\;\xi\in V^*.
\end{equation}
Here, $\langle\cdot,\cdot\rangle$ is the usual pairing between $V$ and $V^\ast$.
If $(V,\rho,\beta)$ is a representation of a multiplicative Hom-Lie superalgebra $(\mathcal A,[\cdot,\cdot],\alpha)$, then $\rho^*$ is not in general a representation of $\mathcal A$ anymore. Let us define the map $\rho^\star:\mathcal A\to gl(V^\ast)$ by
\begin{align}\label{dual-representation}
    \langle\rho^\star(x)(\xi),v\rangle:&=\langle\rho^\ast(\alpha(x))((\beta^{-2})^{\ast}(\xi)),v\rangle\\\nonumber&=-\langle\xi,\rho(\alpha^{-1}(x))(\beta^{-2}(v))\rangle,
\end{align}
for all $\xi\in V^\ast,\;x\in\mathcal{H}(\mathcal A)$ and $v\in V$. Then, the triplet $(V^\ast,\rho^\star,(\beta^{-1})^*)$  is a representation of $(\mathcal A ,[\cdot,\cdot],\alpha)$ on the dual vector space $V^\ast$ with
respect to the map $(\beta^{-1})^\ast$. This is also known as the “dual representation” of $(V,\rho,\beta)$.\\
In particular, let us also recall that the “coadjoint representation” of a regular Hom-Lie superalgebra $(\mathcal A,[\cdot,\cdot],\alpha)$ on $\mathcal A^\ast$ with respect to $(\alpha^{-1})^\ast$ 
is given by the triplet $(\mathcal A^\ast,\mathfrak{ad}^\star,(\alpha^{-1})^\ast)$, where 
\begin{align}\label{dual-adjoint-rep}
  \langle \mathfrak{ad}^\star(x)(\xi),y\rangle&=-<\xi,[\alpha^{-1}(x),\alpha^{-2}(y)]> ,
\end{align}
for all $x,y\in \mathcal{H}(\mathcal A),\;\xi\in\mathcal A^\ast$.\\

\begin{df}
Let $(\mathcal A,[\cdot,\cdot],\alpha)$ be a Hom-Lie superalgebra. An even bilinear form $\mathfrak{B}:\mathcal A\times\mathcal A\to \mathbb K$ is called:
\begin{enumerate}
\item[(1)] \textbf{supersymmetric} if $\mathfrak{B}(x,y)=(-1)^{|x||y|}\mathfrak{B}(y,x),\,\,\ \forall x,y\in\mathcal{H}(\mathcal A)$
\item[(2)] \textbf{super-skewsymmetric} if $\mathfrak{B}(x,y)=-(-1)^{|x||y|}\mathfrak{B}(y,x),\;\forall x,y\in\mathcal{H}(\mathcal A)$.
\item[(3)] \textbf{Invariant} if 
\begin{equation}\label{inv-bil-form-H-Lie-sup}
 \mathfrak B([x,y],\alpha(z))=\mathfrak B(\alpha(x),[y,z]),\;\forall x,y,z\in\mathcal H(\mathcal A). 
\end{equation}
\item[4)] \textbf{Even},if $\mathfrak B(\mathcal A_{\overline 0},\mathcal A_{\overline 1})=\mathfrak B(\mathcal A_{\overline 1},\mathcal A_{\overline 0})=\{0\}$.
\end{enumerate}
\end{df}
\begin{df}
Let $(\mathcal A,[\cdot,\cdot],\alpha)$ be Hom-Lie superalgebra and $\mathfrak B:\mathcal A\times\mathcal A\to \mathbb K$ be an even  invariant nondegenerate bilinear form satisfying 
Eq. \eqref{cond-mult-bil-form}. The quadruple $(\mathcal A,[\cdot,\cdot],\alpha,\mathfrak B)$ is called quadratic Hom-Lie superalgebra.

If $\alpha$ is an involution, that is $\alpha^2=id$ (resp. invertible), the quadratic Hom-Lie superalgebra is said to be involutive (resp. regular) quadratic
Hom-Lie superalgebra and we write for shortness \textbf{IQH}-Lie superalgebra (resp. \textbf{RQH}-Lie algebra).
\end{df}
Suppose that $(\mathcal A,[\cdot,\cdot],\alpha)$ is a Hom-Lie superalgebra. Then the natural nondegenerate symmetric
bilinear form $\mathfrak B_d$ on $\mathcal A\oplus\mathcal A^\ast$ defined by Eq. \eqref{natur-biln-form-H-assoc-sup} 
is invariant on the Hom-Lie superalgebra $\mathcal A\ltimes_{\mathfrak{ad}^\star}\mathcal A^\ast$.

\section{Hom-anti-Zinbiel superalgebras and Hom-anti-pre-Lie superalgebras}\label{Sec3}

In this section, we investigate Hom-anti-Zinbiel and Hom-anti-pre-Lie superalgebras through the framework of nondegenerate supersymmetric bilinear forms. We extend the corresponding constructions established for the associative and Lie settings to the Hom-superalgebra context. In particular, we introduce the notions of super-commutative Connes cocycles and super-commutative $2$-cocycles, establish their correspondence with Hom-anti-Zinbiel and Hom-anti-pre-Lie superalgebras, respectively, and provide characterizations in terms of anti-super-$\mathcal O$-operators and anti-Rota-Baxter operators. These results provide the algebraic foundation for the construction of compatible Hom-anti-pre-Poisson superalgebras developed in the following section.\\

\subsection{Hom-anti-Zinbiel superalgebras}

\begin{df}
 A \textbf{Hom-anti-Zinbiel superalgebra} is a triplet $(\mathcal A,\star,\alpha)$ consisting of a $\mathbb Z_2$-graded vector space $\mathcal A$, an even bilinear operation $\star:\mathcal A\times\mathcal A\to\mathcal A$ and an even linear map $\alpha:\mathcal A\to\mathcal A$ satisfying for all $x,y,z\in\mathcal H(\mathcal A)$, the following identities:
 \begin{eqnarray}
\alpha(x)\star(y\star z)&=&-(x\star y+(-1)^{|x||y|}y\star x)\star\alpha(z),\label{cond-h-anti-Zinb}\\
\alpha(x)\star(y\star z)&=&(-1)^{|x|(|y|+|z|)}\alpha(y)\star(z\star x).\label{cond-h-anti-Zinb1}     
 \end{eqnarray} 
\end{df}
\begin{prop}\label{equiv-H-ant-Z-H-ass-rep}
Let $\mathcal A$ be a vector space together with a bilinear operation  $\star:\mathcal A\times\mathcal A\to\mathcal A$ be an even bilinear map. Then the following conditions are equivalent:
\begin{enumerate}
\item $(\mathcal A,\star,\alpha)$ is a Hom-anti-Zinbiel algebra.
\item $(\mathcal A,\mu_\star,\alpha)$ with the bilinear operation $\mu_\star$ defined by Eq. \eqref{comp-comm-hom-ass-of-Hom-Zinb} is a super-commutative Hom-associative
superalgebra and $(\mathcal A,-\mathfrak L_\star,\alpha)$ is a representation of $(\mathcal A,\mu_\star,\alpha)$.
\item There is a commutative associative algebra structure on $\mathcal A\oplus\mathcal A$ defined by
\begin{eqnarray}
(\alpha\oplus\alpha)(x,a)&=&(\alpha(x),\alpha(a)),\label{struc-supcomm-H-ass-H-ant-zinb1}\\
\mu_{\mathcal A\oplus\mathcal A}((x,a),(y,b))&=&(x\star y+(-1)^{|x||y|}y\star x,-x\star b-(-1)^{|a||y|}y\star a),\label{struc-supcomm-H-ass-H-ant-zinb2}
\end{eqnarray}
for all $x,y,a,b\in\mathcal H(\mathcal A)$.
\end{enumerate}
 $(\mathcal A,\mu_\star,\alpha)$ is called the \textbf{sub-adjacent super-commutative
Hom-associative superalgebra} of a Hom-anti-Zinbiel superalgebra $(\mathcal A,\star,\alpha)$ and $(\mathcal A,\star,\alpha)$ is called a \textbf{compatible
Hom-anti-Zinbiel superalgebra} of $(\mathcal A,\mu_\star,\alpha)$.
\end{prop}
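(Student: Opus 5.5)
The plan is to prove the cycle of equivalences $(1)\Leftrightarrow(2)$ and $(2)\Leftrightarrow(3)$, working throughout with homogeneous elements. Write $\mu=\mu_\star$ and $\mathfrak L_\star(x)y=x\star y$. The point is that identity \eqref{cond-h-anti-Zinb} reads
\[
\alpha(x)\star(y\star z)=-\mu(x,y)\star\alpha(z).
\]
In operator form this is exactly the representation condition \eqref{rAs2} for $\eta=-\mathfrak L_\star$:
\[
(-\mathfrak L_\star)(\mu(x,y))\alpha=(-\mathfrak L_\star)(\alpha(x))(-\mathfrak L_\star)(y).
\]
So the representation half of (2) is literally \eqref{cond-h-anti-Zinb}. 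Super-commutativity of $\mu$ holds automatically from its definition. The remaining content of (2) is Hom-associativity of $\mu$, and the task is to show that, given \eqref{cond-h-anti-Zinb}, this is equivalent to \eqref{cond-h-anti-Zinb1}.

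For $(1)\Rightarrow(2)$, I will expand
\[
\mu(\mu(x,y),\alpha(z))=\mu(x,y)\star\alpha(z)+(-1)^{|z|(|x|+|y|)}\alpha(z)\star\mu(x,y)
\]
and similarly $\mu(\alpha(x),\mu(y,z))$. I will convert every term of the form $\mu(\cdot,\cdot)\star\alpha(\cdot)$ into $-\alpha(\cdot)\star(\cdot\star\cdot)$ using \eqref{cond-h-anti-Zinb}. The term $\alpha(z)\star\mu(x,y)$ splits as $\alpha(z)\star(x\star y)\pm\alpha(z)\star(y\star x)$. After this, both sides become signed sums of six expressions of the form $\alpha(a)\star(b\star c)$, with $(a,b,c)$ running over permutations of $(x,y,z)$. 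Identity \eqref{cond-h-anti-Zinb1} says these are invariant under cyclic permutation, up to the Koszul sign. Using it, every term reduces to one of the two representatives $\alpha(x)\star(y\star z)$ and $\alpha(x)\star(z\star x)$-type, i.e.\ one representative for each cyclic orbit. I then expect the coefficients in $ass^\alpha_\mu(x,y,z)$ to cancel.

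For $(2)\Rightarrow(1)$, \eqref{cond-h-anti-Zinb} is immediate, as noted above. The harder direction is recovering \eqref{cond-h-anti-Zinb1} from $ass^\alpha_\mu=0$. I will rewrite $ass^\alpha_\mu(x,y,z)=0$, via \eqref{cond-h-anti-Zinb}, as a linear relation among the six $\alpha(a)\star(b\star c)$. I will also use the consequence \eqref{rAs3} of the representation property, which is the super-symmetry $\alpha(x)\star(y\star z)=(-1)^{|x||y|}\alpha(y)\star(x\star z)$. This second relation should let me solve for the cyclic identity. This sign bookkeeping is the step I expect to be the main obstacle: one has to check that the combination really isolates $\alpha(x)\star(y\star z)-(-1)^{|x|(|y|+|z|)}\alpha(y)\star(z\star x)$ rather than a multiple that could vanish. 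I will first try to verify it in the purely even case, where the signs disappear, and then reinsert the Koszul signs.

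Finally, $(2)\Leftrightarrow(3)$ follows from the semi-direct product characterization \eqref{Hom-ass-direct-sum1}--\eqref{Hom-ass-direct-sum2}, applied with $V=\mathcal A$, $\eta=-\mathfrak L_\star$ and $\beta=\alpha$. Indeed, \eqref{struc-supcomm-H-ass-H-ant-zinb2} is exactly $\mu_{\mathcal A\oplus V}$ for this data, so $\mathcal A\oplus\mathcal A$ is a super-commutative Hom-associative superalgebra precisely when $(\mathcal A,\mu,\alpha)$ is one and $(\mathcal A,-\mathfrak L_\star,\alpha)$ is a representation. For the forward direction one restricts to $\mathcal A\oplus 0$ and to the mixed components.
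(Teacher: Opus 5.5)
Your proposal is correct and follows essentially the same route as the paper. Once \eqref{cond-h-anti-Zinb} and its consequence \eqref{rAs3} are used, the associator collapses to $ass^\alpha_{\mu_\star}(x,y,z)=-2\bigl(\alpha(x)\star(y\star z)-(-1)^{|x|(|y|+|z|)}\alpha(y)\star(z\star x)\bigr)$, so the multiple you worried about is $-2\neq 0$ in characteristic $0$; your orbit-wise cancellation for $(1)\Rightarrow(2)$ using only \eqref{cond-h-anti-Zinb1} also works, though the second representative should read $\alpha(x)\star(z\star y)$. The only minor deviation is that you get $(2)\Leftrightarrow(3)$ from the semi-direct product criterion with $\eta=-\mathfrak L_\star$, $\beta=\alpha$, which also gives the converse that the paper omits, since it only asserts $(1)\Rightarrow(3)$ as a ``straightforward computation''.
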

\begin{proof}
$(1)\Longrightarrow(2)$\\ 
Let $(\mathcal A,\star,\alpha)$ be a Hom-anti-Zinbiel superalgebra. For any homogeneous $x,y\in\mathcal A$, we have 
\begin{align*}
\mu_\star(x,y)&=(x\star y+(-1)^{|x||y|}y\star x)\\&=(-1)^{|x||y|}(y\star x+(-1)^{|x||y|}x\star y)\\&=(-1)^{|x||y|}\mu_\star(y,x). 
\end{align*}
Then $\mu$ is super-commutative.\\
For any $x,y,z\in\mathcal H(\mathcal A)$, by Eq. \eqref{cond-h-anti-Zinb1}, we have
\begin{align*}
 \mu_\star(\alpha(x),\mu_\star(y,z))-\mu_\star(\mu_\star(x,y),\alpha(z))&=\alpha(x)\star(y\star z)+(-1)^{|y||z|}\alpha(x)\star(z\star y)\\&+(-1)^{|x|(|y|+|z|)}\Big((y\star z)\star+(-1)^{|y||z|}(z\star y)\Big)\star\alpha(x)\\&-(x\star y+(-1)^{|x||y|}y\star x)\star\alpha(z)\\&-(-1)^{|z|(|x|+|y|)}\alpha(z)\star(x\star y+(-1)^{|x||y|}y\star x)\\&=2(-1)^{|x|(|y|+|z|)}(y\star z)\star\alpha(x)+2(-1)^{|x|(|y|+|z|)+|y||z|}(z\star y)\star\alpha(x)\\&-2(x\star y)\star\alpha(z)-2(-1)^{|x||y|}(y\star x)\star\alpha(z)\\&=2(\alpha(x)\star(y\star z)-(-1)^{|x|(|y|+|z|)}\alpha(y)\star(z\star x))\\&=0.
\end{align*}
That is, $(\mathcal A,\mu,\alpha)$ is a Hom-associative superalgebra.\\
For any homogeneous elements $x,y,z\in\mathcal A$, since $\star$ Eq. \eqref{cond-h-anti-Zinb}, therfore
\begin{align*}
-\mathfrak L_\star(\mu_\star(x,y))\alpha(z)&=-(x\star y)\star\alpha(z)-(-1)^{|x||y|}(y\star x)\star\alpha(z)\\&=\alpha(x)\star(y\star z)\\&=\mathfrak L_\star(\alpha(x))(\mathfrak L_\star(y)z).     
\end{align*}
Then $-\mathfrak L_\star$ satisfies Eq. \eqref{rAs2} which implies that $(\mathcal A,-\mathfrak L_\star,\alpha)$ is a representation of $(\mathcal A,\mu_\star,\alpha)$.\\
$(2)\Longrightarrow(1)$\\
Suppose that $(\mathcal A,\mu_\star,\alpha)$  is a super-commutative Hom-associative
superalgebra and $(\mathcal A,-\mathfrak L_\star,\alpha)$ is a representation of $(\mathcal A,\mu_\star,\alpha)$, then for any $x,y,z\in\mathcal H(\mathcal A)$, we have
\begin{align*}
-(x\star y+(-1)^{|x||y|}y\star x)\star\alpha(z)&=-\mu_\star(x,y)\alpha(z)\\&=-\mathfrak L_\star(\mu(x,y))\alpha(z)\\&=(-\mathfrak L_\star)(\alpha(x))((-\mathfrak L_\star)(y)z)=\mathfrak L_\star(\alpha(x))(\mathfrak L_\star(y)z)\\&=\alpha(x)\star(y\star z).    
\end{align*}
Then, the bilinear map $\star$ satisfies Eq. \eqref{cond-h-anti-Zinb}. In addition, for any $x,y,z\in\mathcal H(\mathcal A)$, since $(\mathcal A,\mu_\star,\alpha)$ is a super-commutative Hom-associative superalgebra, we have
\begin{align*}
0&=\mu_\star(\alpha(x),\mu_\star(y,z))-\mu_\star(\mu_\star(x,y),\alpha(z))\\&=2\Big(\alpha(x)\star(y\star z)-(-1)^{|x|(|y|+|z|)}\alpha(y)\star(z\star x)\Big) .   
\end{align*}
Therefore $\star$ satisfies Eq. \eqref{cond-h-anti-Zinb1}, and thus it can be known that $(\mathcal A,\star,\alpha)$ is a Hom-anti-Zinbiel superalgebra.\\
$(1)\Longrightarrow(3)$ Straightforward computation.
\end{proof}
\begin{df}
Let $(\mathcal A,\mu,\alpha)$ be a super-commutative Hom-associative  superalgebra and $(V,\eta,\beta)$ be a representation. An even linear map $T:V\to\mathcal A$ is called an \textbf{anti-super-$\mathcal O$-operator} on $(\mathcal A,\mu,\alpha)$ associate with $(V,\eta,\beta)$ if the following equations holds:
\begin{eqnarray}
T\circ\beta&=&\alpha\circ T,\label{cond-ant-O-oper-ncomm-Hom-ass1} \\
\mu( T(u),T(v))&=&-T\big(\eta(T(u))v+(-1)^{|u||v|}\eta(T(v))u\big),\;\forall u,v\in\mathcal H(V).\label{cond-ant-O-oper-ncomm-Hom-ass2}
\end{eqnarray}
Furthermore, $T$ is called \textbf{strong} if
\begin{equation}\label{strong-ant-O-oper-ncomm-Hom-ass}
\eta(\mu(T(u),T(v)))\beta(w)= (-1)^{|u|(|v|+|w|)}\eta(\mu(T(v),T(w)))\beta(u),\;\forall u,v,w\in\mathcal{H}(V).   
\end{equation}
In particular, a super-anti-$\mathcal O$-operator $\mathcal R$ of $(\mathcal A,\mu,\alpha)$ associated with the bimodule $(\mathcal A,\mathfrak L,\mathcal R,\alpha)$
is called an \textbf{anti-Rota-Baxter operator (of weight zero)}; that is, $\mathcal R :\mathcal A\to\mathcal A $ is an even linear map commuting with $\alpha$ and satisfying
\begin{equation}\label{ant-RB-oper-noncomm-ass}
 \mu(\mathcal R(x),\mathcal R(y))=-\mathcal R\big(\mu(\mathcal R(x),y)+\mu(x,\mathcal R(y))\big),\;\forall x,y\in\mathcal A.   
\end{equation}
An anti-Rota–Baxter operator $\mathcal R$ is called \textbf{strong} if $\mathcal R$ satisfies
\begin{equation}\label{strong-ant-RB-oper-noncomm-ass}
\mu(\mu(\mathcal R(x),\mathcal R(y)),\alpha(z))=\mu(\alpha(x),\mu(\mathcal R(y),\mathcal R(z)),\;\forall x,y,z\in\mathcal A.    
\end{equation}
\end{df} 

\begin{thm}\label{Hom-ant-Zinb-by-ant-O-oper}
Let $T:V\rightarrow\mathcal A$ be an anti-super-$\mathcal O$-operator on a super-commutative Hom-associative superalgebra $(\mathcal A,\mu,\alpha)$ with respect to a representation $(V,\eta,\beta)$. Let us define a binary operation $\star_T:V\times V\to V$ by:
\begin{equation}\label{H-anti-Zinb-from-O-oper-H-ass}  
u \star_T v=-\eta(T(u))v,\;\forall u,v\in\mathcal H(V).
\end{equation}
Then $(V,\star_T,\beta)$ is a Hom-anti-Zinbiel superalgerbra if and only if $T$ is strong. In this case $T$ is a homomorphism of super-commutative Hom-associative superalgebras from $(V,\mu_{\star_T},\beta)$ to $(\mathcal A,\mu,\alpha)$. Furthermore, there is an induced Hom-anti-Zinbiel superalgebra structure on
$T(V)=\{T(u),\;u\in V\}\subseteq A$ given by
\begin{equation}\label{ant-Hom-Zinb-T(V)}
T(u)\star T(v)=T(u\star_Tv),\;\forall u,v\in\mathcal H(V).
\end{equation}
\end{thm}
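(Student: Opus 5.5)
We verify the two defining identities \eqref{cond-h-anti-Zinb} and \eqref{cond-h-anti-Zinb1} for $\star_T$ directly, mirroring the proof of Theorem \ref{Hom-Zinb-by-O-oper}. Throughout, $T$ is even, so $|T(u)|=|u|$.

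\textbf{Step 1: identity \eqref{cond-h-anti-Zinb} holds for any anti-super-$\mathcal O$-operator.} For $u,v,w\in\mathcal H(V)$ we compute
\[
(u\star_T v+(-1)^{|u||v|}v\star_T u)\star_T\beta(w)=\eta\Big(T\big(\eta(T(u))v+(-1)^{|u||v|}\eta(T(v))u\big)\Big)\beta(w).
\]
Applying \eqref{cond-ant-O-oper-ncomm-Hom-ass2}, this equals $-\eta(\mu(T(u),T(v)))\beta(w)$. On the other side, \eqref{cond-ant-O-oper-ncomm-Hom-ass1} gives
\[
\beta(u)\star_T(v\star_T w)=\eta(T\beta(u))\eta(T(v))w=\eta(\alpha T(u))\eta(T(v))w .
\]
By the representation axiom \eqref{rAs2}, this equals $\eta(\mu(T(u),T(v)))\beta(w)$. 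Hence $\beta(u)\star_T(v\star_T w)=-(u\star_T v+(-1)^{|u||v|}v\star_T u)\star_T\beta(w)$, and \eqref{cond-h-anti-Zinb} holds with no extra hypothesis.

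\textbf{Step 2: identity \eqref{cond-h-anti-Zinb1} is equivalent to strongness.} From Step 1,
\[
\beta(u)\star_T(v\star_T w)=\eta(\mu(T(u),T(v)))\beta(w),\qquad
\beta(v)\star_T(w\star_T u)=\eta(\mu(T(v),T(w)))\beta(u).
\]
Therefore \eqref{cond-h-anti-Zinb1}, namely $\beta(u)\star_T(v\star_T w)=(-1)^{|u|(|v|+|w|)}\beta(v)\star_T(w\star_T u)$, is literally \eqref{strong-ant-O-oper-ncomm-Hom-ass}. This proves the ``if and only if.'' The only care needed is with the sign bookkeeping: checking that the Koszul sign in \eqref{cond-h-anti-Zinb1} matches the one in \eqref{strong-ant-O-oper-ncomm-Hom-ass} exactly, with no residual sign from moving $T$ past homogeneous elements. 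Since $T$ and $\eta$ are even, I expect no such sign to appear.

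\textbf{Step 3: homomorphism and the induced structure on $T(V)$.} Using \eqref{cond-ant-O-oper-ncomm-Hom-ass2},
\[
T(\mu_{\star_T}(u,v))=-T\big(\eta(T(u))v+(-1)^{|u||v|}\eta(T(v))u\big)=\mu(T(u),T(v)),
\]
and together with $T\beta=\alpha T$ this shows $T$ is a homomorphism $(V,\mu_{\star_T},\beta)\to(\mathcal A,\mu,\alpha)$. Proposition \ref{equiv-H-ant-Z-H-ass-rep} also guarantees that $(V,\mu_{\star_T},\beta)$ is super-commutative Hom-associative.

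For $T(V)$, we need $T(u)\star T(v)=T(u\star_T v)$ to be well defined. Suppose $T(u)=T(u')$. Then $u\star_T v=-\eta(T(u))v$ depends only on $T(u)$, so the left slot is fine. For the right slot, set $u''=v-v'$ with $T(u'')=0$. Then \eqref{cond-ant-O-oper-ncomm-Hom-ass2} with $T(u'')=0$ gives $T(\eta(T(u))u'')=-(-1)^{|u||u''|}T(\eta(T(u''))u)-\mu(T(u),T(u''))=0$.

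Finally, $T(V)$ is stable under $\alpha$ because $\alpha T=T\beta$. The identities transfer from $V$ since $T$ intertwines $\star_T$, $\star$ and $\beta$, $\alpha$. The main obstacle is the careful sign check in Step 2.
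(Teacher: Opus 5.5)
Your proposal is correct and follows the paper's proof. You show that Eq.~\eqref{cond-h-anti-Zinb} holds for every anti-super-$\mathcal O$-operator by using $T\beta=\alpha T$ and Eq.~\eqref{rAs2}, and you observe that Eq.~\eqref{cond-h-anti-Zinb1} for $\star_T$ is literally the strongness condition \eqref{strong-ant-O-oper-ncomm-Hom-ass}. Your Step 3 (the identity $T(\mu_{\star_T}(u,v))=\mu(T(u),T(v))$, and well-definedness of the product on $T(V)$ via $T(\eta(T(u))u'')=0$ for $u''\in\ker T$) supplies details the paper leaves unstated.
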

\begin{proof}
For any homogeneous elements $u,v,w\in V$, we have:
\begin{align*}
\beta(u)\star_T(v\star_T w)&=\eta(T(\beta(u)))(\eta(T(v))(w))\\&= \eta(\alpha T(u))(\mu(T(v))(w))\\&= \eta(\mu(T(u),T(v)))\beta(w)\\&=-\eta\Big(T\big(\eta(T(u))v+(-1)^{|u||v|}\eta(T(v))u\big)\Big)\beta(w)\\&=-\Big[\eta\Big(T(\eta(T(u))v)\Big)\beta(w)+(-1)^{|u||v|}\eta\Big(T(\eta(T(v))u)\Big)\beta(w)\Big]\\&=-(u\star_T v+(-1)^{|u||v|}v\star_T u)\star_T\beta(w).  
\end{align*}
Then, $\star_T$ satisfies Eq. \eqref{cond-h-anti-Zinb} on $V$.\\
By the same way, $T$ is strong if and only if
\begin{align*}
\beta(u)\star_T(v\star_T w)&=\eta(\mu(T(u),T(v)))\beta(w)\\&\overset{(\ref{strong-ant-O-oper-ncomm-Hom-ass})}{=}  (-1)^{|u|(|v|+|w|)}\eta(\mu(T(v),T(w)))\beta(u)\\&= (-1)^{|u|(|v|+|w|)}  \beta(v)\star_T(w\star_T u).
\end{align*}
Therefore, Eq. \eqref{cond-h-anti-Zinb} also satisfies which gives the Proof.
\end{proof}

\begin{cor}\label{H-ant-Zinb-to-Hom-ass-RB}
Let $\mathcal R$ be a strong anti-Rota-Baxter operator of weight zero on an Hom-associative superalgebra  $(\mathcal A,\mu,\alpha)$. Then, $(\mathcal A,\star_\mathcal R,\alpha)$ where $\star_\mathcal R:\mathcal A\to\mathcal A$ is defined by 
\begin{equation}\label{H-anti-dend-From-ARB-Hass} 
 x\star_\mathcal R y=-\mu(\mathcal R(x),y),
\end{equation}
for any $x,y\in\mathcal H(\mathcal A)$ is a Hom-anti-Zinbiel superalgebra.
\end{cor}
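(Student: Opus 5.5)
The plan is to obtain Corollary \ref{H-ant-Zinb-to-Hom-ass-RB} as the special case $V=\mathcal A$ of Theorem \ref{Hom-ant-Zinb-by-ant-O-oper}. Take the adjoint representation $(\mathcal A,\mathfrak L,\alpha)$ of $(\mathcal A,\mu,\alpha)$ given by Eq. \eqref{adj-rep-H-ass-sup}, and set $T=\mathcal R$. Since $\mathcal R$ commutes with $\alpha$, condition \eqref{cond-ant-O-oper-ncomm-Hom-ass1} holds with $\beta=\alpha$.

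Next, check the anti-$\mathcal O$-operator identity \eqref{cond-ant-O-oper-ncomm-Hom-ass2}. With $\eta=\mathfrak L$ its right-hand side is
$$-\mathcal R\big(\mu(\mathcal R(x),y)+(-1)^{|x||y|}\mu(\mathcal R(y),x)\big).$$
Because $\mathcal R$ is even and $\mu$ is super-commutative, $(-1)^{|x||y|}\mu(\mathcal R(y),x)=\mu(x,\mathcal R(y))$. So this expression equals the right-hand side of \eqref{ant-RB-oper-noncomm-ass}, and \eqref{cond-ant-O-oper-ncomm-Hom-ass2} is exactly the anti-Rota--Baxter identity.

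The only real point is translating strongness, i.e. showing that \eqref{strong-ant-RB-oper-noncomm-ass} gives \eqref{strong-ant-O-oper-ncomm-Hom-ass} for $\eta=\mathfrak L$, $\beta=\alpha$. The latter reads
$$\mu(\mu(\mathcal R(x),\mathcal R(y)),\alpha(z))=(-1)^{|x|(|y|+|z|)}\mu(\mu(\mathcal R(y),\mathcal R(z)),\alpha(x)).$$
Apply super-commutativity to the outer product on the right: moving $\alpha(x)$ to the front cancels the sign $(-1)^{|x|(|y|+|z|)}$. This rewrites the right-hand side as $\mu(\alpha(x),\mu(\mathcal R(y),\mathcal R(z)))$, so the condition is precisely \eqref{strong-ant-RB-oper-noncomm-ass}. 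One must keep track of the parity of each factor here; $\mathcal R$ being even is what makes the signs match.

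Finally, Theorem \ref{Hom-ant-Zinb-by-ant-O-oper} gives a Hom-anti-Zinbiel superalgebra $(\mathcal A,\star_{\mathcal R},\alpha)$ with
$$x\star_{\mathcal R}y=-\mathfrak L(\mathcal R(x))y=-\mu(\mathcal R(x),y),$$
which is Eq. \eqref{H-anti-dend-From-ARB-Hass}. As a fallback, one can instead verify \eqref{cond-h-anti-Zinb} and \eqref{cond-h-anti-Zinb1} directly. For \eqref{cond-h-anti-Zinb}, use Hom-associativity, $\alpha\mathcal R=\mathcal R\alpha$, and \eqref{ant-RB-oper-noncomm-ass}. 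For \eqref{cond-h-anti-Zinb1}, use Hom-associativity together with strongness; this second identity is where the strong hypothesis is essential.
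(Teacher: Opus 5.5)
Your proposal is correct and takes the same route as the paper, which states this corollary without a separate proof as the specialization of Theorem \ref{Hom-ant-Zinb-by-ant-O-oper} to the adjoint representation $(\mathcal A,\mathfrak L,\alpha)$ with $T=\mathcal R$. Your sign checks, which use super-commutativity and the evenness of $\mathcal R$ and $\alpha$ to reduce \eqref{cond-ant-O-oper-ncomm-Hom-ass2} and \eqref{strong-ant-O-oper-ncomm-Hom-ass} to \eqref{ant-RB-oper-noncomm-ass} and \eqref{strong-ant-RB-oper-noncomm-ass}, supply exactly the details that specialization needs.
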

\begin{lem}\label{inv-O-op-H-ass-aut-strong}
An invertible anti-super-$\mathcal O$-operator of a super-commutative Hom-associative superalgebra is automatically
strong.    
\end{lem}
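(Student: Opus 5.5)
Let $T:V\to\mathcal A$ be an invertible anti-super-$\mathcal O$-operator of $(\mathcal A,\mu,\alpha)$ associated with $(V,\eta,\beta)$. We must verify Eq. \eqref{strong-ant-O-oper-ncomm-Hom-ass}. The plan is to use invertibility to transport everything to $\mathcal A$: any $x,y,z\in\mathcal H(\mathcal A)$ can be written as $x=T(u)$, $y=T(v)$, $z=T(w)$ with homogeneous $u,v,w\in V$ of the same degrees, since $T$ is even and bijective. Since $T\beta=\alpha T$, we also have $\beta=T^{-1}\alpha T$. Thus the strong condition becomes the identity
\[
\eta(\mu(x,y))\,T^{-1}\alpha(z)=(-1)^{|x|(|y|+|z|)}\eta(\mu(y,z))\,T^{-1}\alpha(x)
\]
for all homogeneous $x,y,z\in\mathcal A$.

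The first step is to use the representation axiom \eqref{rAs2}: $\eta(\mu(T(u),T(v)))\beta(w)=\eta(\alpha T(u))\eta(T(v))w$. The strong condition is then equivalent to
\[
\eta(\alpha T(u))\eta(T(v))w=(-1)^{|u|(|v|+|w|)}\eta(\alpha T(v))\eta(T(w))u .
\]
In the notation of Theorem \ref{Hom-ant-Zinb-by-ant-O-oper}, this reads $\beta(u)\star_T(v\star_T w)=(-1)^{|u|(|v|+|w|)}\beta(v)\star_T(w\star_T u)$. Recall that identity \eqref{cond-h-anti-Zinb} for $\star_T$ already holds for any anti-super-$\mathcal O$-operator, by the first computation in that proof.

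The key step is to apply $T$ to both sides and exploit injectivity. Write $\tilde u=\eta(T(v))w$, so that $v\star_T w=-\tilde u$. Using the anti-$\mathcal O$ identity \eqref{cond-ant-O-oper-ncomm-Hom-ass2} together with $T\beta=\alpha T$, compute
\[
T\big(\eta(\alpha T(u))\tilde u\big)=-\mu(\alpha T(u),T(\tilde u))-(-1)^{|u|(|v|+|w|)}T\big(\eta(T(\tilde u))\beta(u)\big).
\]
Here $T(\tilde u)$ is itself computable: $T(\eta(T(v))w)+(-1)^{|v||w|}T(\eta(T(w))v)=-\mu(T(v),T(w))$.

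I would then sum the three cyclic versions of the desired identity and use super-commutativity and Hom-associativity of $\mu$ on the images. The expected outcome is that $T$ applied to $\beta(u)\star_T(v\star_T w)-(-1)^{|u|(|v|+|w|)}\beta(v)\star_T(w\star_T u)$ equals a combination of terms $\mu(\alpha(x),\mu(y,z))$ that cancels by Hom-associativity. Injectivity of $T$ then gives the strong identity.

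An alternative route, which I would try first if the direct computation becomes messy, is to define the product $x\cdot y:=T(\eta(x)T^{-1}(y))$ on $\mathcal A$. This makes $T$ an isomorphism onto $(\mathcal A,-\cdot,\alpha)$, and the anti-$\mathcal O$ condition says exactly that $-\cdot$ is a compatible "anti-splitting" of $\mu$, namely $\mu(x,y)=-(x\cdot y+(-1)^{|x||y|}y\cdot x)$. Condition \eqref{cond-h-anti-Zinb} then holds for $-\cdot$. The argument in the proof of Proposition \ref{equiv-H-ant-Z-H-ass-rep} shows that, given \eqref{cond-h-anti-Zinb} and the symmetrized product being Hom-associative, the difference $\mu_\star(\alpha(x),\mu_\star(y,z))-\mu_\star(\mu_\star(x,y),\alpha(z))$ equals $2$ times the failure of \eqref{cond-h-anti-Zinb1}. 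Since $\mu_\star=\mu$ (up to a sign that does not affect associativity) is Hom-associative and the characteristic is $0$, identity \eqref{cond-h-anti-Zinb1} follows. By Theorem \ref{Hom-ant-Zinb-by-ant-O-oper}, this is precisely strongness.

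The main obstacle is this last deduction. It requires rechecking that the $(1)\Rightarrow(2)$ computation in Proposition \ref{equiv-H-ant-Z-H-ass-rep} uses only \eqref{cond-h-anti-Zinb} to rewrite the terms before invoking \eqref{cond-h-anti-Zinb1}, so that it can be read in reverse. It also requires careful tracking of the Koszul signs, and of the sign of $\mu_\star=-\mu$, in the transported setting.
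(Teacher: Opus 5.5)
Your alternative route is correct and is essentially the paper's proof. The paper works directly on $V$: it observes that $\mu_{\star_T}(u,v)=T^{-1}\mu(T(u),T(v))$ makes $(V,\mu_{\star_T},\beta)$ Hom-associative, applies $(2)\Rightarrow(1)$ of Proposition \ref{equiv-H-ant-Z-H-ass-rep} to get \eqref{cond-h-anti-Zinb1} for $\star_T$, and concludes by Theorem \ref{Hom-ant-Zinb-by-ant-O-oper}; transporting to $\mathcal A$ first, as you do, is only a cosmetic difference. The obstacle you flag is harmless: \eqref{cond-h-anti-Zinb} alone gives $\alpha(x)\star(y\star z)=(-1)^{|x||y|}\alpha(y)\star(x\star z)$, and with it $\mu_\star(\alpha(x),\mu_\star(y,z))-\mu_\star(\mu_\star(x,y),\alpha(z))$ reduces to exactly twice the defect of \eqref{cond-h-anti-Zinb1}, so the direct cyclic-sum route is not needed (and there only the super-symmetrization of $T(\eta(T(v))w)$, not $T(\tilde u)$ itself, is actually determined).
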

\begin{proof}
 Let $T:V \to \mathcal A$ be an invertible anti-super-$\mathcal O$-operator on a super-commutative Hom-associative superalgebra
 $(\mathcal A,\mu,\alpha)$ with respect to a representation 
$(V,\eta,\beta)$.
\begin{align*}
    \mu_{\star_T}(u,v)&=u\star_T v+(-1)^{|u||v|}v\star_T u \\
                    &=-\eta(T(u))v-(-1)^{|u||v|}\eta(T(v))u\\
                    &=T^{-1}(\mu(T(u),T(v)))
\end{align*}

\begin{align*}
 \mu_{\star_T}(\mu_{\star_T}(u,v),\beta(w))&= \mu_{\star_T} T^{-1}
 (\mu(T(u),T(v)),\beta(w))\\
 &=T^{-1}(\mu(\mu(T(u),T(v),T(\beta(w))))) \\
 &=T^{-1}(\mu(\mu(\alpha(T(u)),\mu(T(v),T(w))),\alpha(T(w))))\\
 &=\mu_{\star_T}(\beta(u),T^{-1}(\mu(T(u),T(v))) \\
 &=\mu_{\star_T}(\beta(u),\mu_{\star_T}(v,w))
\end{align*}
 
  It follows that $(V,\mu_{\star_T},\beta)$ is a Hom-associative superalgebra. Applying Proposition \ref{equiv-H-ant-Z-H-ass-rep}, we conclude that $(V,\star_T,\beta)$ is a Hom-anti-Zinbiel superalgebra. Therefore, Theorem \ref{Hom-ant-Zinb-by-ant-O-oper} ensures that $T$ is strong. 
\end{proof}
\begin{thm}\label{comp-ant-dend-iif-inver-O-op}
Let $(\mathcal A,\mu,\alpha)$ be a super-commutative Hom-associative superalgebra. Then, there is a compatible Hom-anti-Zinbiel superalgebra structure on $\mathcal A$ if and only if there exists an invertible anti-super-$\mathcal O$-operator on $(\mathcal A,\mu,\alpha)$.  
\end{thm}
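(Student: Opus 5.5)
We prove both directions, using Theorem \ref{Hom-ant-Zinb-by-ant-O-oper}, Lemma \ref{inv-O-op-H-ass-aut-strong} and Proposition \ref{equiv-H-ant-Z-H-ass-rep}, as in the non-Hom, non-super case.

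\emph{Sufficiency.} Let $T:V\to\mathcal A$ be an invertible anti-super-$\mathcal O$-operator with respect to a representation $(V,\eta,\beta)$. By Lemma \ref{inv-O-op-H-ass-aut-strong}, $T$ is strong. Theorem \ref{Hom-ant-Zinb-by-ant-O-oper} then makes $(V,\star_T,\beta)$ a Hom-anti-Zinbiel superalgebra, with $u\star_T v=-\eta(T(u))v$. We transport this structure along $T$ by setting
\[
x\star y=T\big(T^{-1}(x)\star_T T^{-1}(y)\big)=-T\big(\eta(x)T^{-1}(y)\big),\qquad x,y\in\mathcal H(\mathcal A).
\]
The map $T$ is even and satisfies $T\beta=\alpha T$, so $T^{-1}\alpha=\beta T^{-1}$. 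Hence both identities \eqref{cond-h-anti-Zinb} and \eqref{cond-h-anti-Zinb1} carry over from $(V,\star_T,\beta)$ to $(\mathcal A,\star,\alpha)$, and the latter is a Hom-anti-Zinbiel superalgebra.

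It remains to check compatibility, i.e.\ that $\mu_\star=\mu$. Put $u=T^{-1}(x)$ and $v=T^{-1}(y)$. Then
\[
x\star y+(-1)^{|x||y|}y\star x=-T\big(\eta(T(u))v+(-1)^{|u||v|}\eta(T(v))u\big)=\mu(T(u),T(v))=\mu(x,y)
\]
by \eqref{cond-ant-O-oper-ncomm-Hom-ass2}, where we used that $T$ preserves degrees, so $|u|=|x|$ and $|v|=|y|$.

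\emph{Necessity.} Let $(\mathcal A,\star,\alpha)$ be a compatible Hom-anti-Zinbiel superalgebra, so $\mu=\mu_\star$. By Proposition \ref{equiv-H-ant-Z-H-ass-rep}, $(\mathcal A,-\mathfrak L_\star,\alpha)$ is a representation of $(\mathcal A,\mu,\alpha)$. We claim that $T=\mathrm{id}_{\mathcal A}$ is an invertible anti-super-$\mathcal O$-operator with respect to this representation. Condition \eqref{cond-ant-O-oper-ncomm-Hom-ass1} is trivial. For \eqref{cond-ant-O-oper-ncomm-Hom-ass2} we need
\[
\mu(x,y)=-\big(-x\star y-(-1)^{|x||y|}y\star x\big)=\mu_\star(x,y),
\]
which holds by compatibility.

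The only point requiring care is in the sufficiency direction: checking that the transported product really satisfies both Hom-anti-Zinbiel identities. This depends on the intertwining $T^{-1}\alpha=\beta T^{-1}$ and on sign bookkeeping under $T$. Since $T$ is even, no extra signs appear, and the verification is routine.
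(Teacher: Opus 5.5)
Your proof is correct and follows essentially the same route as the paper. For necessity, the identity map is an invertible anti-super-$\mathcal O$-operator with respect to $(\mathcal A,-\mathfrak L_\star,\alpha)$. For sufficiency, you combine Lemma \ref{inv-O-op-H-ass-aut-strong} with Theorem \ref{Hom-ant-Zinb-by-ant-O-oper}, transport along $T$ to get $x\star y=-T(\eta(x)T^{-1}(y))$, and obtain compatibility from \eqref{cond-ant-O-oper-ncomm-Hom-ass2}. Two of your steps are ones the paper only asserts: you check explicitly, via $T^{-1}\alpha=\beta T^{-1}$, that the transported product satisfies both Hom-anti-Zinbiel identities, and you cite Proposition \ref{equiv-H-ant-Z-H-ass-rep} to confirm that $-\mathfrak L_\star$ is a representation.
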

\begin{proof}
  Suppose that $(\mathcal{A},\star,\alpha)$ is a compatible Hom-anti-Zinbiel superalgebra on $(\mathcal{A},\mu,\alpha)$ .Then 
\begin{align*}
  \mu(x,y)&=x\star y+(-1)^ {|x||y|} y\star x \\
          &=-(-\mathfrak L_\star(x)y-(-1)^ {|x||y|}\mathfrak L_\star(y)x) 
\end{align*}
It follows that, the identity map $T=id$ is an invertible anti-super-$\mathcal O$-operator of $(\mathcal{A},\mu,\alpha)$ with respect to the representation $(\mathcal{A},-\mathfrak L_\star,\alpha)$. \\
Conversely, let $T\to\mathcal{A}$ be an invertible anti-super-$\mathcal{O}$-operator on the Hom-associative superalgebra $(\mathcal{A},\mu,\alpha)$ associated with the representation $(V,\eta,\beta)$. By Lemma \ref{inv-O-op-H-ass-aut-strong}, the operator $T$ is strong. Hence, Theorem \ref{Hom-ant-Zinb-by-ant-O-oper} ensures that $V$ carries a Hom-anti-Zinbiel superalgebra structure whose multiplication is defined by Eq.~\eqref{H-anti-Zinb-from-O-oper-H-ass}.

Since $T$ is invertible, this Hom-anti-Zinbiel superalgebra structure can be transferred to the underlying vector space of $\mathcal{A}$. More precisely, the induced multiplication on $\mathcal{A}$ is given by Eq.~\eqref{ant-Hom-Zinb-T(V)}. For arbitrary homogeneous elements $x,y\in\mathcal{H}(\mathcal{A})$, there exist homogeneous elements $u,v\in\mathcal{H}(V)$ such that
$x=T(u)$ and $ y=T(v)$.

Therefore,
\begin{align*}
x\star y
&=T(u)\star T(v)\\
&=T(u\star_Tv)\\
&=T\bigl(-\eta(x)T^{-1}(y)\bigr).
\end{align*}

On the other hand, by Eq.~\eqref{cond-ant-O-oper-ncomm-Hom-ass2}, we obtain
\begin{align*}
\mu(x,y)
&=\mu\bigl(T(u),T(v)\bigr)\\
&=-T\bigl(\eta(T(u))v+(-1)^{|u||v|}\eta(T(v))u\bigr)\\
&=-T\bigl(\eta(x)T^{-1}(y)+(-1)^{|x||y|}\eta(y)T^{-1}(x)\bigr)\\
&=x\star y+(-1)^{|x||y|}y\star x.
\end{align*}
Consequently, the multiplication $\star$ is compatible with the Hom-associative product $\mu$, and thus $(\mathcal{A},\star,\alpha)$ defines a compatible Hom-anti-Zinbiel superalgebra structure on the Hom-associative superalgebra $(\mathcal{A},\mu,\alpha)$.
\end{proof}

\begin{df}\label{sup-comm-Connes-coc}
 Let $(\mathcal A,\mu,\alpha)$ be a super-commutative Hom-associative superalgebra and $\mathfrak B$ be an even bilinear form on $(\mathcal A,\mu,\alpha)$. If $\mathfrak B$ is supersymmetric and satisfies Eq. \eqref{def-con-cocy-supcomm-H-ass-sup} then $\mathfrak B$ is called a \textbf{super-commutative Connes
cocycle}.   
\end{df}
\begin{thm}\label{from-h-ass-to-H-anti-Zinb-via-comm-Connes-coc}
 Let $(\mathcal A,\mu,\alpha)$ be a multiplicative super-commutative Hom-associative superalgebra and $\mathfrak B$ be a nondegenerate
super-commutative Connes cocycle on $(\mathcal A,\mu,\alpha)$. Then, there exists a compatible
Hom-anti-Zinbiel superalgebra structure $(\mathcal A,\star,\alpha)$ on $(\mathcal A,\mu,\alpha)$ defined by 
\begin{equation}\label{eq-sup-comm-con-cocy-to-H-ant-Zinb}
\mathfrak{B}(\alpha(x)\star\alpha(y),\alpha^2(z))
=-(-1)^{|x||y|}\mathfrak{B}(\alpha(y),\mu(x,z)), \forall x,y,z\in\mathcal H(\mathcal A). 
\end{equation}
Conversely, let $(\mathcal A,\star,\alpha)$ be a Hom-anti-Zinbiel superalgebra and $(\mathcal A,\mu,\alpha)$ be the sub-adjacent super-commutative
Hom-associative superalgebra. Then the natural nondegenerate symmetric bilinear form $\mathfrak B_d$ defined by Eq. \eqref{natur-biln-form-H-assoc-sup} is a super-commutative Connes cocycle on the super-commutative Hom-associative superalgebra $\mathcal A\ltimes_{-\mathfrak L_{\star}^{\star}}\mathcal A^{*}$.
 
\end{thm}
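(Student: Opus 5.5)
We follow the strategy of Proposition \ref{comp-H-ant-Zinb-from-Connes-Coc}, replacing the super-skew-symmetric Connes cocycle by a supersymmetric one and the $\mathcal O$-operator by an anti-super-$\mathcal O$-operator. Since $\alpha$ must be invertible for the coadjoint representation $(\mathcal A^\ast,-\mathfrak L^\star,(\alpha^{-1})^\ast)$ of Theorem \ref{dual-rep-H-ass-sup} to make sense, we assume $\alpha$ is bijective throughout, as the paper tacitly does. First define the even linear map $T:\mathcal A\to\mathcal A^\ast$ by $\langle T(x),y\rangle=\mathfrak B(x,y)$. It is bijective because $\mathfrak B$ is nondegenerate and even.

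The key step is to show that $T^{-1}:\mathcal A^\ast\to\mathcal A$ is an invertible anti-super-$\mathcal O$-operator on $(\mathcal A,\mu,\alpha)$ associated with $(\mathcal A^\ast,-\mathfrak L^\star,(\alpha^{-1})^\ast)$. There are two conditions to check.
\begin{enumerate}
\item[(a)] Condition \eqref{cond-ant-O-oper-ncomm-Hom-ass1}, i.e.\ $T^{-1}\circ(\alpha^{-1})^\ast=\alpha\circ T^{-1}$. This is equivalent to $\alpha^\ast T\alpha=T$, i.e.\ to the invariance $\mathfrak B(\alpha(x),\alpha(y))=\mathfrak B(x,y)$. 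As in the proof of Proposition \ref{comp-H-ant-Zinb-from-Connes-Coc}, we will use this relation, in effect assuming \eqref{cond-mult-bil-form} together with multiplicativity.
\item[(b)] Condition \eqref{cond-ant-O-oper-ncomm-Hom-ass2}. Write $a^\ast=T(x)$, $b^\ast=T(y)$ and pair both sides with $\alpha(z)$. Using the definition \eqref{eq:1.3} of $\mathfrak L^\star$, we rewrite $\mathfrak B(\mu(y,z),\alpha(x))$ and $\mathfrak B(\mu(z,x),\alpha(y))$ as pairings $\langle \mathfrak L^\star(y)T(x),\alpha(z)\rangle$ and $\langle\mathfrak L^\star(x)T(y),\alpha(z)\rangle$, up to signs.
\end{enumerate}

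Supersymmetry of $\mathfrak B$ is exactly what changes the sign relative to the skew case. The Connes identity \eqref{def-con-cocy-supcomm-H-ass-sup} then becomes
\[
\mu(T^{-1}a^\ast,T^{-1}b^\ast)=-T^{-1}\bigl(-\mathfrak L^\star(T^{-1}a^\ast)b^\ast-(-1)^{|a||b|}\mathfrak L^\star(T^{-1}b^\ast)a^\ast\bigr),
\]
which is the anti-$\mathcal O$-operator identity for $\eta=-\mathfrak L^\star$. By Lemma \ref{inv-O-op-H-ass-aut-strong}, $T^{-1}$ is strong. Theorem \ref{comp-ant-dend-iif-inver-O-op}, or directly Theorem \ref{Hom-ant-Zinb-by-ant-O-oper} transported along $T^{-1}$, then gives a compatible Hom-anti-Zinbiel product
\[
x\star y=T^{-1}\bigl(\mathfrak L^\star(x)T(y)\bigr).
\]
Pairing $T(\alpha(x)\star\alpha(y))$ with $\alpha^2(z)$ and unwinding $\mathfrak L^\star$ produces $-(-1)^{|x||y|}\mathfrak B(\alpha(y),\mu(x,z))$. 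This is \eqref{eq-sup-comm-con-cocy-to-H-ant-Zinb}. Since nondegeneracy determines $\star$ uniquely from this formula, the constructed product is the one in the statement.

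The main obstacle is sign bookkeeping in step (b). The Koszul signs $(-1)^{|x|(|y|+|z|)}$ and the factors $\alpha^{-1},\alpha^{-2}$ from $\mathfrak L^\star$ must cancel correctly, using multiplicativity $\alpha\mu=\mu(\alpha\otimes\alpha)$ and $\alpha$-invariance of $\mathfrak B$. I would first carry out the ungraded computation with $\alpha=\mathrm{Id}$ to fix the overall signs, and then insert the parity signs term by term.

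For the converse, take the semidirect product $\mathcal A\ltimes_{-\mathfrak L^\star_\star}\mathcal A^\ast$. By Proposition \ref{equiv-H-ant-Z-H-ass-rep} and Theorem \ref{dual-rep-H-ass-sup}, this is a super-commutative Hom-associative superalgebra whose product is given by \eqref{Hom-ass-direct-sum1}. The form $\mathfrak B_d$ is clearly even, nondegenerate and supersymmetric. To verify the cocycle identity, it suffices by trilinearity to check triples with two entries in $\mathcal A$ and one in $\mathcal A^\ast$; all other triples vanish, since $\mathcal A^\ast\cdot\mathcal A^\ast=0$ and $\mathfrak B_d$ vanishes on $\mathcal A\times\mathcal A$. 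For such a triple the three terms reduce to $\langle\xi,\cdot\rangle$ applied to $\mu(x,y)$, $x\star y$ and $y\star x$, with the appropriate twists. The identity $\mu=\star+(-1)^{|x||y|}\star^{\mathrm{op}}$ then makes the sum vanish.
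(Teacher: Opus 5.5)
Your proposal follows the paper's proof step by step: the map $T$ induced by $\mathfrak B$, then $T^{-1}$ as an invertible (hence strong) anti-super-$\mathcal O$-operator for $(\mathcal A^*,-\mathfrak L^\star,(\alpha^{-1})^*)$, then the transported product $x\star y=T^{-1}(\mathfrak L^\star(x)T(y))$ and the final pairing with $\alpha^2(z)$; you are also right that the argument needs $\alpha$ bijective and $\mathfrak B(\alpha(x),\alpha(y))=\mathfrak B(x,y)$, which the paper uses silently when it replaces $T(\alpha(x))$ by $(\alpha^{-1})^*T(x)$. The paper gives no argument for the converse, and your direct check is the right one, but the dual of $(\mathcal A,-\mathfrak L_\star,\alpha)$ given by Theorem \ref{dual-rep-H-ass-sup} is $(\mathcal A^*,\mathfrak L_\star^\star,(\alpha^{-1})^*)$ (as used in Section \ref{Sec4}), not $-\mathfrak L_\star^\star$ as printed in the statement, and the three terms cancel only with that sign and with $\mu$ multiplicative.
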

\begin{proof}
Let $T: \mathcal A\rightarrow \mathcal A^{*}$ be a linear map defined by Eq. \eqref{eq:Hom-ass-T}. Since $\mathfrak B$ is nondegenerate, the map $T$ is invertible. Then for any $a^{*},b^{*}\in \mathcal A^{*}$, there exist $x,y\in\mathcal A$ such that $a^{*}=T(x)$ and $b^{*}=T(y)$. We compute  $$\mathfrak B(\mu(x,y),\alpha(z)) =\langle T(\mu(x,y)),\alpha(z)\rangle =\langle T(\mu(T^{-1}(a^{*}),T^{-1}(b^{*}))),\alpha(z)\rangle,\;\forall z\in\mathcal H(\mathcal A).$$
By Eq. \eqref{dual-adjoint-rep}, we have
\begin{eqnarray*}
\mathfrak B(\mu(y,z),\alpha(x)) &=&(-1)^{|x|(|y|+|z|)}\langle T(\alpha(x)),\mu(y,z)\rangle =-(-1)^{|x||z|}\langle \mathfrak{L}^{\star}(\alpha(y))T(\alpha(x)),\alpha^2(z)\rangle\\
&=&-(-1)^{|x||z|}\langle \mathfrak{L}^{\star}(\alpha(y))(\alpha^{-1})^*T(x),\alpha^2(z)\rangle\\
&=&-(-1)^{|x||z|}\langle (\alpha^{-1})^*\mathfrak{L}^{\star}(y)T(x),\alpha^2(z)\rangle\\
&=&-(-1)^{|x||z|}\langle (\mathfrak{L}^{\star}(y)T(x),\alpha(z)\rangle\\
&=&-(-1)^{|a||z|}\langle \mathfrak{L}^{\star}(T^{-1}(b^{*}))a^{*},\alpha(z)\rangle.
\end{eqnarray*}
Similarly we have
$$\mathfrak B(\mu(z,x),\alpha(y))=(-1)^{|x||z|}\mathfrak B(\mu(x,z),\alpha(y))=-(-1)^{|z|(|a|+|b|)}\langle \mathfrak{L}^{\star}(T^{-1}(a^{*}))b^{*},\alpha(z)\rangle.$$ 
   
Since $\mathfrak B$ is a super-commutative Connes cocycle on the Hom-associative superalgebra $(\mathcal A,\mu,\alpha)$, we have \[ \mathfrak B(\mu(x,y),\alpha(z)) +(-1)^{|x|(|y|+|z|)}\mathfrak B(\mu(y,z),\alpha(x)) +(-1)^{|z|(|x|+|y|)}\mathfrak B(\mu(z,x),\alpha(y))=0 . \] Therefore \[ \mu(T^{-1}(a^{*}),T^{-1}(b^{*})) = -T^{-1}\big(-\mathfrak{L}^{\star}(T^{-1}(a^{*}))(b^{*}) -(-1)^{|a||b|} \mathfrak{L}^{\star}(T^{-1}(b^{*}))(a^{*})  \big). \] Thus $T^{-1}$ is an invertible anti-super-$\mathcal O$-operator of the Hom-associative superalgebra $(\mathcal A,\mu,\alpha)$ associated with the representation $(\mathcal A^{*},-\mathfrak{L}^{\star},(\alpha^{-1})^{*})$ and Lemma \ref{inv-O-op-H-ass-aut-strong} gives that $T$ is strong. Then, by using Corollary \ref{H-ant-Zinb-to-Hom-ass-RB}, there exists a compatible Hom-anti-Zinbiel superalgebra structure $\star$ on $(\mathcal A,\mu,\alpha)$ defined by \begin{equation}\label{eq:Hom-assoc-product} x\star y=-T^{-1}\big(-\mathfrak{L}^{\star}(x)T(y)\big),\qquad \forall x,y\in\mathcal H(\mathcal A). \end{equation} Finally, for any $x,y,z\in\mathcal H(\mathcal A)$, we obtain 
\begin{eqnarray*}
\mathfrak B(\alpha(x)\star\alpha( y),\alpha^2(z)) &=&\langle T(\alpha(x)\star\alpha( y)),\alpha^2(z)\rangle\\&=&\langle \mathfrak{L}^{\star}(\alpha(x))T(\alpha(y)),\alpha^2(z)\rangle\\&=&-(-1)^{|x||y|}\langle T(\alpha(y)),\mu(x,z)\rangle \\&=&-(-1)^{|x||y|}\mathfrak B(\alpha(y),\mu(x,z)). 
    \end{eqnarray*}
    Hence the Hom-anti-Zinbiel product $\star$ satisfies \[ \mathfrak B(\alpha(x)\star\alpha( y),\alpha^2(z))=(-1)^{|x||y|}\mathfrak B(\alpha(y),\mu(x,z)),\; \forall x,y,z\in\mathcal H(\mathcal A), \] which gives the result.
    
\end{proof}

\begin{cor}\label{H-ant-pre-Lie-from-bil-Form-mult-H-Lie}
  Let $(\mathcal A,\mu,\alpha)$ be a multiplicative super-commutative Hom-associative superalgebra and $\mathfrak B$ be a nondegenerate
super-commutative Connes cocycle on $(\mathcal A,\mu,\alpha)$ satisfying Eq. \eqref{cond-mult-bil-form}. Then, there exists a compatible
Hom-anti-Zinbiel superalgebra structure $(\mathcal A,\star,\alpha)$ on $(\mathcal A,\mu,\alpha)$ defined by 
\begin{equation}\label{eq-mult-sup-comm-con-cocy-to-H-ant-Zinb}
\mathfrak{B}(x\star y,\alpha(z))
=-(-1)^{|x||y|}\mathfrak{B}(\alpha(y),\mu(x,z)),\;\forall x,y,z\in\mathcal H(\mathcal A). 
\end{equation}
Conversely, let $(\mathcal A,\star,\alpha)$ be a Hom-anti-Zinbiel superalgebra and $(\mathcal A,\mu,\alpha)$ be the sub-adjacent super-commutative
Hom-associative superalgebra. Then the natural nondegenerate symmetric bilinear form $\mathfrak B_d$ defined by Eq. \eqref{natur-biln-form-H-assoc-sup} is a super-commutative Connes cocycle on the super-commutative Hom-associative superalgebra $\mathcal A\ltimes_{-\mathfrak L_{\star}^{\star}}\mathcal A^{*}$.
\end{cor}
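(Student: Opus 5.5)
The plan is to derive the corollary from Theorem \ref{from-h-ass-to-H-anti-Zinb-via-comm-Connes-coc}, using the extra invariance \eqref{cond-mult-bil-form} to strip the twists from the defining identity \eqref{eq-sup-comm-con-cocy-to-H-ant-Zinb}. The argument would run in parallel with the way Corollary \ref{Cor-comp-mult-H-ant-Zinb-from-Connes-Coc} was obtained from Proposition \ref{comp-H-ant-Zinb-from-Connes-Coc}.

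\textbf{Step 1 (setting up the operator).} I take $T:\mathcal A\to\mathcal A^*$ as in Eq. \eqref{eq:Hom-ass-T}, so $\langle T(x),y\rangle=\mathfrak B(x,y)$. Since $\mathfrak B$ is nondegenerate, $T$ is invertible. Condition \eqref{cond-mult-bil-form} gives
\[
\langle T(\alpha(x)),\alpha(y)\rangle=\langle T(x),y\rangle,
\]
that is, $T\circ\alpha=(\alpha^{-1})^*\circ T$. This is exactly the intertwining relation $T^{-1}\circ(\alpha^{-1})^*=\alpha\circ T^{-1}$, i.e. condition \eqref{cond-ant-O-oper-ncomm-Hom-ass1} for $T^{-1}$ with respect to the coadjoint-type representation $(\mathcal A^*,-\mathfrak L^\star,(\alpha^{-1})^*)$. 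In the proof of the theorem this relation was only implicitly available; here it holds as a consequence of the hypothesis. The rest of that proof then carries over verbatim. The cocycle identity \eqref{def-con-cocy-supcomm-H-ass-sup} makes $T^{-1}$ an invertible anti-super-$\mathcal O$-operator, Lemma \ref{inv-O-op-H-ass-aut-strong} makes it strong, and Theorem \ref{Hom-ant-Zinb-by-ant-O-oper} together with Theorem \ref{comp-ant-dend-iif-inver-O-op} yields the compatible Hom-anti-Zinbiel product $x\star y=T^{-1}\big(\mathfrak L^\star(x)T(y)\big)$.

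\textbf{Step 2 (simplifying the defining identity).} The product $\star$ satisfies \eqref{eq-sup-comm-con-cocy-to-H-ant-Zinb}. I would first check that $\alpha$ is a morphism for $\star$. This follows from multiplicativity of $\mu$ together with $T\alpha=(\alpha^{-1})^*T$ and the relation $\mathfrak L^\star(\alpha(x))(\alpha^{-1})^*=(\alpha^{-1})^*\mathfrak L^\star(x)$ used in the proof of the theorem. With this, the left side becomes
\[
\mathfrak B(\alpha(x\star y),\alpha^2(z))=\mathfrak B(x\star y,\alpha(z))
\]
by \eqref{cond-mult-bil-form}, which is the left side of \eqref{eq-mult-sup-comm-con-cocy-to-H-ant-Zinb}.

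As an alternative that avoids the morphism property, I can compute $\mathfrak B(x\star y,\alpha(z))=\langle \mathfrak L^\star(x)T(y),\alpha(z)\rangle$ directly. Using \eqref{dual-adjoint-rep} in its associative form, this equals
\[
-(-1)^{|x||y|}\langle T(y),\mu(\alpha^{-1}(x),\alpha^{-1}(z))\rangle .
\]
Applying multiplicativity and \eqref{cond-mult-bil-form} once more turns this into $-(-1)^{|x||y|}\mathfrak B(\alpha(y),\mu(x,z))$ after shifting the twists. I expect tracking the powers of $\alpha$ and the signs here to be the main obstacle. The sign conventions in the earlier proof are not fully consistent: the last displayed identity in the proof of Theorem \ref{from-h-ass-to-H-anti-Zinb-via-comm-Connes-coc} drops a minus sign. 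So I would fix the convention by the pairing computation rather than by citation.

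\textbf{Step 3 (the converse).} This is identical to the converse in Theorem \ref{from-h-ass-to-H-anti-Zinb-via-comm-Connes-coc}. Condition \eqref{cond-mult-bil-form} plays no role there, so I would simply cite it.
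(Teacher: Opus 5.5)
Your proposal is correct and follows the argument the paper intends. The paper states this corollary without its own proof; it is meant to be the theorem's construction rerun with \eqref{cond-mult-bil-form}, exactly as you do. Your observation that \eqref{cond-mult-bil-form} is precisely the relation $T\circ\alpha=(\alpha^{-1})^*\circ T$, which the theorem's proof uses tacitly, is right, and both of your ways of stripping the twists in Step 2 are sound. One addition is needed: \eqref{cond-mult-bil-form} together with nondegeneracy forces $\alpha$ to be injective (if $\alpha(x)=0$ then $\mathfrak B(x,y)=\mathfrak B(\alpha(x),\alpha(y))=0$ for all $y$), hence invertible. You should state this before writing $\alpha^{-1}$ and $(\alpha^{-1})^*$.
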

\subsection{Hom-anti-pre-Lie superalgebras}
\begin{df}(\cite{wi-oth})
A \textbf{Hom-anti-pre-Lie superalgebra} (also called \textbf{Hom-anti-left-symmetric superalgebra}) is a triplet $(\mathcal A,\circ,\alpha)$ consisting of a $\mathbb Z_2$-graded vector space $\mathcal{A}$, an even bilinear map $\circ:\mathcal A\times\mathcal A\to\mathcal A$ and an even linear map $\alpha:\mathcal A\to\mathcal A$ such that, the following conditions hold   \begin{eqnarray}
 &&\alpha(x)\circ(y\circ z)-(-1)^{|x||y|} \alpha(y)\circ(x\circ z)=[y,x]_\circ \circ \alpha(z),\label{cond-Hom-ant-pre-Lie1}  \\
 &&(-1)^{|x||z|}[x,y]_\circ\circ \alpha(z)+(-1)^{|x||y|}[y,z]_\circ\circ \alpha(x)+(-1)^{|y||z|}[z,x]_\circ\circ \alpha(y)=0\label{cond-Hom-ant-pre-Lie2},
\end{eqnarray} 
for any $x,y,z\in\mathcal{H}(\mathcal{A})$, where $[x,y]_\circ=x\circ y-(-1)^{|x||y|}y\circ x$.\\
A Hom-anti-pre-Lie superalgebra $(\mathcal A,\circ,\mu)$ is called \textbf{regular} if $\alpha$ is bijective, and called \textbf{multiplicative} if $\alpha$ is an algebra endomorphism, i.e., $\alpha(x\circ y)=\alpha(x)\circ\alpha(y)$ for any $x,y\in\mathcal A$.
\end{df}
\begin{prop}(\cite{wi-oth})
Let $\mathcal A$ be a $\mathbb Z_2$-grade vector space with an even bilinear map $\circ:\mathcal A\times\mathcal A\to\mathcal A$ and an even linear map $\alpha:\mathcal A\to\mathcal A$. Then the
following assertions are equivalent: 
\begin{enumerate}
\item $(\mathcal A,\circ,\alpha)$ is a Hom-anti-pre-Lie superalgebra.
\item For $(\mathcal A,\circ,\alpha)$, Eq. \eqref{cond-Hom-ant-pre-Lie1} and for any $x,y,z\in\mathcal H(\mathcal A)$, the following equation hold
\begin{equation}\label{cond-equiv1}
\circlearrowleft_{x,y,z}(-1)^{|x||z|}\alpha(x)\circ[y,z]_\circ=(-1)^{|x||z|}\alpha(x)\circ[y,z]_\circ+ (-1)^{|x||y|}\alpha(y)\circ[z,x]_\circ+(-1)^{|y||z|}\alpha(z)\circ[x,y]_\circ=0.
\end{equation}
\item $(\mathcal A,\circ,\alpha)$ is a Hom-Lie admissible superalgebra, such that $(\mathcal A,-\mathfrak L_\circ,\alpha)$ is a representation of the sub-adjacent Hom-Lie superalgebra $(\mathfrak g(\mathcal A),[\cdot,\cdot]_\circ,\alpha)$, where $\mathfrak L_\circ:\mathfrak g(\mathcal A)\to End(\mathcal A)$ is an even linear map defined by $\mathfrak L_\circ(x)(y)=x\circ y,\;\forall x,y\in\mathcal H(\mathcal A)$.
\end{enumerate}
\end{prop}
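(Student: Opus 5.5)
Since the proposition establishes an equivalence of three conditions, I will prove the cycle $(1)\Rightarrow(2)\Rightarrow(1)$ and $(1)\Leftrightarrow(3)$. Throughout I abbreviate $[x,y]=[x,y]_\circ$. The basic tool is to expand everything in terms of the product $\circ$ and to use the super-skew-symmetry $[x,y]=-(-1)^{|x||y|}[y,x]$.

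For $(1)\Leftrightarrow(2)$, Eq. \eqref{cond-Hom-ant-pre-Lie1} is common to both conditions. It therefore suffices to show that, once Eq. \eqref{cond-Hom-ant-pre-Lie1} holds, Eq. \eqref{cond-Hom-ant-pre-Lie2} is equivalent to Eq. \eqref{cond-equiv1}. I plan to expand each term $\alpha(x)\circ[y,z]$ in \eqref{cond-equiv1} as
\[
\alpha(x)\circ(y\circ z)-(-1)^{|y||z|}\alpha(x)\circ(z\circ y),
\]
and then regroup the six resulting terms of the cyclic sum into three pairs. Each pair has the form $\alpha(u)\circ(v\circ w)-(-1)^{|u||v|}\alpha(v)\circ(u\circ w)$, with the same third argument $w$. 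Applying \eqref{cond-Hom-ant-pre-Lie1} turns each pair into $[v,u]\circ\alpha(w)$. The cyclic sum in \eqref{cond-equiv1} then becomes, up to an overall sign, exactly the cyclic sum in \eqref{cond-Hom-ant-pre-Lie2}, so one identity vanishes if and only if the other does. The main obstacle of the whole argument is the sign bookkeeping in this regrouping. I will check it by choosing the pairing so that the weights $(-1)^{|x||z|}$, $(-1)^{|x||y|}$, $(-1)^{|y||z|}$ combine with the Koszul signs coming from \eqref{cond-Hom-ant-pre-Lie1} into the weights appearing in \eqref{cond-Hom-ant-pre-Lie2}. If needed, I will verify the pairing on the purely even case first and then insert the parities.

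For $(1)\Rightarrow(3)$, I first show that $[\cdot,\cdot]$ is a Hom-Lie superbracket. Super-skew-symmetry is immediate from the definition. For the Hom super-Jacobi identity \eqref{H-sJ}, I expand $[\alpha(x),[y,z]]$ and its cyclic companions into the terms $\alpha(x)\circ[y,z]$ and $[y,z]\circ\alpha(x)$. The terms of the form $\alpha(x)\circ[y,z]$ cancel by \eqref{cond-equiv1}, which is available by the equivalence $(1)\Leftrightarrow(2)$ already proved. The terms of the form $[y,z]\circ\alpha(x)$ cancel by \eqref{cond-Hom-ant-pre-Lie2}. This gives Hom-Lie admissibility. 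Next I verify the representation condition \eqref{repres-Hom-Lie2} for $\rho=-\mathfrak L_\circ$, namely
\[
-[x,y]\circ\alpha(z)=\alpha(x)\circ(y\circ z)-(-1)^{|x||y|}\alpha(y)\circ(x\circ z).
\]
Since $-[x,y]=(-1)^{|x||y|}[y,x]$, this is exactly \eqref{cond-Hom-ant-pre-Lie1}.

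The paper's definition of a representation also requires condition \eqref{repres-Hom-Lie1}, i.e. $\mathfrak L_\circ(\alpha(x))\alpha=\alpha\,\mathfrak L_\circ(x)$, which amounts to multiplicativity of $\alpha$. I will either read this requirement as implicit in the cited statement, in the multiplicative setting, or note that it is needed only for that part of the definition.

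For $(3)\Rightarrow(1)$, the representation identity gives back \eqref{cond-Hom-ant-pre-Lie1} by reversing the computation above. The Hom super-Jacobi identity, expanded as before, gives
\[
\text{(cyclic sum in \eqref{cond-equiv1})}\pm\text{(cyclic sum in \eqref{cond-Hom-ant-pre-Lie2})}=0.
\]
The pairing argument from the first step, which uses only \eqref{cond-Hom-ant-pre-Lie1}, shows that the first cyclic sum equals $\pm$ the second. I will choose the signs so that the combination becomes a nonzero multiple of the cyclic sum in \eqref{cond-Hom-ant-pre-Lie2}, namely twice it. Since the characteristic is $0$, I can divide by $2$, which yields \eqref{cond-Hom-ant-pre-Lie2}.
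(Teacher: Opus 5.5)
You have the right architecture: pair the six terms of \eqref{cond-equiv1} by their last argument, and expand the Hom super-Jacobiator of $[\cdot,\cdot]_\circ$. But the sign bookkeeping you postponed does not close as set up, because you use \eqref{cond-Hom-ant-pre-Lie1} with two incompatible signs.

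In the $(1)\Leftrightarrow(2)$ step you use it as displayed in Section~3, so each pair becomes $[v,u]_\circ\circ\alpha(w)$. The pairing is forced, and the cyclic sum in \eqref{cond-equiv1} then equals
\[
-\big((-1)^{|x||y|+|x||z|}[x,y]_\circ\circ\alpha(z)+(-1)^{|x||y|+|y||z|}[y,z]_\circ\circ\alpha(x)+(-1)^{|x||z|+|y||z|}[z,x]_\circ\circ\alpha(y)\big).
\]
This is $-1$ times the left side of \eqref{cond-Hom-ant-pre-Lie2} with extra factors $(-1)^{|x||y|}$, $(-1)^{|y||z|}$, $(-1)^{|x||z|}$ on its three terms, and these are not uniform. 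For $|x|=|z|=\bar 1$, $|y|=\bar 0$ only the $[z,x]_\circ\circ\alpha(y)$ term flips, so the result is not $\pm$ the sum in \eqref{cond-Hom-ant-pre-Lie2}.

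The same problem affects $(3)\Rightarrow(1)$, where there are no signs left to choose. The Jacobiator $\circlearrowleft_{x,y,z}(-1)^{|x||z|}[\alpha(x),[y,z]_\circ]_\circ$ is forced to equal the sum in \eqref{cond-equiv1} minus the sum in \eqref{cond-Hom-ant-pre-Lie2}. With the Section~3 sign and the parities above, it equals $2[x,y]_\circ\circ\alpha(z)-2[y,z]_\circ\circ\alpha(x)$, which is not a multiple of the sum in \eqref{cond-Hom-ant-pre-Lie2}.

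Conversely, in $(1)\Rightarrow(3)$ you call the identity $-[x,y]_\circ\circ\alpha(z)=\alpha(x)\circ(y\circ z)-(-1)^{|x||y|}\alpha(y)\circ(x\circ z)$ ``exactly'' \eqref{cond-Hom-ant-pre-Lie1}. Since $-[x,y]_\circ=(-1)^{|x||y|}[y,x]_\circ$, it differs from the Section~3 display by $(-1)^{|x||y|}$. That matters when $x,y$ are both odd: for $x=y$ odd, one form gives $\alpha(x)\circ(x\circ z)=(x\circ x)\circ\alpha(z)$ and the other gives $\alpha(x)\circ(x\circ z)=-(x\circ x)\circ\alpha(z)$.

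The missing point is that the Section~3 display of \eqref{cond-Hom-ant-pre-Lie1} lacks the Koszul factor $(-1)^{|x||y|}$ that appears in \eqref{Hom-anti-pre-Lie1} of the introduction. The identity that (3) actually encodes has right-hand side $(-1)^{|x||y|}[y,x]_\circ\circ\alpha(z)=-[x,y]_\circ\circ\alpha(z)$. Fix that convention and use it throughout. Then:
\begin{enumerate}
\item The $z$-pair is $(-1)^{|x||z|}\big(\alpha(x)\circ(y\circ z)-(-1)^{|x||y|}\alpha(y)\circ(x\circ z)\big)=-(-1)^{|x||z|}[x,y]_\circ\circ\alpha(z)$.
\item The $x$-pair gives $-(-1)^{|x||y|}[y,z]_\circ\circ\alpha(x)$, and the $y$-pair gives $-(-1)^{|y||z|}[z,x]_\circ\circ\alpha(y)$.
\item Hence the sum in \eqref{cond-equiv1} is exactly $-1$ times the sum in \eqref{cond-Hom-ant-pre-Lie2}, and the Jacobiator is $-2$ times it.
\item The representation identity for $-\mathfrak L_\circ$ coincides with \eqref{Hom-anti-pre-Lie1}.
\end{enumerate}
With this correction the rest of your plan, including the division by $2$, goes through.

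Your remark about \eqref{repres-Hom-Lie1} is correct: for $-\mathfrak L_\circ$ it amounts to multiplicativity of $\alpha$, which (1) does not supply. State it as an extra hypothesis for $(1)\Rightarrow(3)$ rather than treating it as implicit.
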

\begin{thm}\label{Hom-ant-pre-Lie-by-ant-O-oper}(\cite{wi-oth})
 Let $T$ be an anti-super-$\mathcal O$-operator on a Hom-Lie superalgebra $(\mathfrak g,[\cdot,\cdot],\alpha)$ with respect to a representation $(V,\rho,\beta)$. Then, $(V,\circ_V,\beta)$ where, $\circ_V:V\times V\to V$ is defined by
 \begin{equation}\label{ant-O-oper-to-anti-pre-Lie}
 u\circ_V v=-\rho(T(u))v,\;\forall u,v\in V,    
 \end{equation}
 satisfies Eq. \eqref{cond-Hom-ant-pre-Lie1}. Moreover, $(V,\circ_V,\beta)$ is a Hom-Lie-admissible such that $(V,\circ_V,\beta)$ is a Hom-anti-pre-Lie superalgebra if and only if $T$ is strong. So, $T$ is a homomorphism of Hom-Lie superalgebras from the sub-adjacent Hom-Lie superalgebra $(\mathfrak g(V),[\cdot,\cdot]_{\circ_V},\beta)$ to $(\mathfrak g,[\cdot,\cdot],\alpha)$. Therefore, there is an induced Hom-anti-pre-Lie
superalgebra structure on $T(V)=\{T(u);\;u\in V\}\subset\mathfrak g$ given by
\begin{equation}\label{ind-Hom-anti-T(V)}
T(u)\circ_\mathfrak gT(v)=T(u\circ_Vv),\forall u,v\in V,
\end{equation}
and $T$ is a homomorphism of Hom-anti-pre-Lie superalgebras.
\end{thm}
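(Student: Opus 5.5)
Everything in the statement comes down to one identity: for all homogeneous $u,v,w\in V$,
\begin{equation*}
\beta(u)\circ_V(v\circ_V w)-(-1)^{|u||v|}\beta(v)\circ_V(u\circ_V w)=\rho(\alpha T(u))\rho(T(v))w-(-1)^{|u||v|}\rho(\alpha T(v))\rho(T(u))w .
\end{equation*}
It follows from $\circ_V=-\rho(T(\cdot))$, since the two minus signs cancel, together with $T\circ\beta=\alpha\circ T$. I take the anti-super-$\mathcal O$-operator on the Hom-Lie superalgebra to mean $T\beta=\alpha T$ and
\[
[T(u),T(v)]=-T\big(\rho(T(u))v-(-1)^{|u||v|}\rho(T(v))u\big),
\]
which is the Lie analogue of Eq. \eqref{cond-ant-O-oper-ncomm-Hom-ass2}. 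I take strong to mean
\[
\circlearrowleft_{u,v,w}(-1)^{|u||w|}\rho([T(u),T(v)])\beta(w)=0 .
\]

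\textbf{First, Eq. \eqref{cond-Hom-ant-pre-Lie1}.} By the representation axiom \eqref{repres-Hom-Lie2}, the right-hand side of the identity above equals $\rho([T(u),T(v)])\beta(w)$. Applying the anti-$\mathcal O$ identity rewrites this as $-\rho\big(T(\rho(T(u))v-(-1)^{|u||v|}\rho(T(v))u)\big)\beta(w)$. Now the definition of $\circ_V$ gives
\[
\rho(T(u))v-(-1)^{|u||v|}\rho(T(v))u=-[u,v]_{\circ_V}.
\]
Hence the expression equals $\rho(T([u,v]_{\circ_V}))\beta(w)=-[u,v]_{\circ_V}\circ_V\beta(w)=[v,u]_{\circ_V}\circ_V\beta(w)$, which is exactly Eq. \eqref{cond-Hom-ant-pre-Lie1} on $V$. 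This step uses only the $\mathcal O$-identity and \eqref{repres-Hom-Lie2}.

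\textbf{Second, Eq. \eqref{cond-Hom-ant-pre-Lie2} is equivalent to strongness.} From the first step we have $[u,v]_{\circ_V}\circ_V\beta(w)=-\rho([T(u),T(v)])\beta(w)$. Substituting this into each term of the cyclic sum \eqref{cond-Hom-ant-pre-Lie2} turns it, up to an overall sign, into the strong condition. So the two conditions are equivalent, and this is where the signs $(-1)^{|x||z|}$ must be matched carefully with the cyclic form of the strong condition. Once both identities hold, Hom-Lie admissibility follows from the Proposition of \cite{wi-oth} recalled just above.

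\textbf{Third, the homomorphism property and the induced structure on $T(V)$.} The anti-$\mathcal O$ identity gives
\[
T([u,v]_{\circ_V})=-T\big(\rho(T(u))v-(-1)^{|u||v|}\rho(T(v))u\big)=[T(u),T(v)],
\]
and $T\beta=\alpha T$ already holds, so $T$ is a morphism of Hom-Lie superalgebras. For $T(V)$, define $T(u)\circ_{\mathfrak g}T(v)=T(u\circ_V v)$. This is well defined: if $T(u)=T(u')$, then $u\circ_V v=-\rho(T(u))v=u'\circ_V v$, and the same argument handles the second slot. Since $T(V)$ is a homomorphic image of $V$, it satisfies both identities. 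It is $\alpha$-stable because $\alpha T=T\beta$, and $T$ is a homomorphism by construction.

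The main obstacle is the sign bookkeeping in the second step, in particular aligning the paper's convention for ``strong'' in the Lie case with the cyclic signs of \eqref{cond-Hom-ant-pre-Lie2}. The rest is formal.
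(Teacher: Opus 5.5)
Your computation is the standard one. The paper gives no proof of this statement and only cites \cite{wi-oth}, but its proof of the associative analogue, Theorem \ref{Hom-ant-Zinb-by-ant-O-oper}, is the same argument. Your conventions agree with \eqref{cond-ant-O-oper-Hom-Lie2}, and with \eqref{cond-strong-O-Oper} multiplied by $(-1)^{|u||w|}$. In your second step no extra sign bookkeeping is needed: substituting $[u,v]_{\circ_V}\circ_V\beta(w)=-\rho([T(u),T(v)])\beta(w)$ termwise turns the left side of \eqref{cond-Hom-ant-pre-Lie2} into exactly $-1$ times your cyclic sum.

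There is, however, a sign error at the end of your first step. The equality $-[u,v]_{\circ_V}=[v,u]_{\circ_V}$ is false in the graded setting. There $-[u,v]_{\circ_V}=(-1)^{|u||v|}[v,u]_{\circ_V}$, and the two differ when $u,v$ are both odd. What your computation actually proves is
\[
\beta(u)\circ_V(v\circ_V w)-(-1)^{|u||v|}\beta(v)\circ_V(u\circ_V w)=-[u,v]_{\circ_V}\circ_V\beta(w)=(-1)^{|u||v|}[v,u]_{\circ_V}\circ_V\beta(w),
\]
which is the identity \eqref{Hom-anti-pre-Lie1} of the introduction. The Section 3 display \eqref{cond-Hom-ant-pre-Lie1} drops this sign, and this is evidently a misprint. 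Item (3) of the cited Proposition says $-\mathfrak L_\circ$ is a representation of the sub-adjacent Hom-Lie superalgebra, which means precisely that the right-hand side is $-[x,y]_\circ\circ\alpha(z)$. You should state the identity with the sign and flag the discrepancy, rather than force agreement through a false equality. Your second step is unaffected, because it uses $-[u,v]_{\circ_V}\circ_V\beta(w)=\rho([T(u),T(v)])\beta(w)$ directly.

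Two smaller gaps remain.

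\textbf{Well-definedness on $T(V)$.} The second slot is not handled by ``the same argument.'' Replacing $v$ by $v'$ with $T(v)=T(v')$ can change $u\circ_V v$ itself; only $T(u\circ_V v)$ is invariant. To see this you need \eqref{cond-ant-O-oper-Hom-Lie2} in the form $T(\rho(T(u))v)=(-1)^{|u||v|}T(\rho(T(v))u)-[T(u),T(v)]$. Apply it to the homogeneous components of $v-v'$, which lie in $\ker T$ because $T$ is even.

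\textbf{Hom-Lie-admissibility.} The cited Proposition only gives ``strong $\Rightarrow$ Hom-Lie-admissible.'' The paper uses the converse in Proposition \ref{inver-O-oper}: it checks that $[\cdot,\cdot]_{\circ_V}$ is a Hom-Lie bracket and then concludes from this theorem that $T$ is strong. The converse follows from your first step:
\begin{enumerate}
\item Multiply the correctly signed identity by $(-1)^{|u||w|}$ and sum cyclically. This gives $\circlearrowleft_{u,v,w}(-1)^{|u||w|}\beta(u)\circ_V[v,w]_{\circ_V}=-S$, where $S$ is the left side of \eqref{cond-Hom-ant-pre-Lie2} on $V$.
\item Consequently the cyclic Hom-super Jacobiator of \eqref{H-sJ}, namely $\circlearrowleft_{u,v,w}(-1)^{|u||w|}[\beta(u),[v,w]_{\circ_V}]_{\circ_V}$, equals $-S-S=-2S$.
\item Since $\mathrm{char}\,\mathbb K=0$, Hom-Lie-admissibility of $(V,\circ_V,\beta)$ is equivalent to \eqref{cond-Hom-ant-pre-Lie2}, and therefore to strongness of $T$.
\end{enumerate}
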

\begin{cor}\label{Hom-ant-pre-Lie-by-ant-R-B}(\cite{wi-oth})
Let $\mathcal R:\mathfrak g\to\mathfrak g$ be a strong anti-Rota-Baxter operator of a Hom-Lie superalgebra $(\mathfrak g,[\cdot,\cdot],\alpha)$. Then, the product 
\begin{equation}\label{ant-R-B-to-anti-pre-Lie}
x\circ y=-[\mathcal R(x),y],\;\forall x,y\in\mathfrak g,    
\end{equation}
defines on $\mathfrak g$ a Hom-anti-pre-Lie superalgebra structure. Conversely, if $\mathcal R:\mathfrak g\to\mathfrak g$ is a linear transformation of a Hom-Lie superalgebra $(\mathfrak g,[\cdot,\cdot],\alpha)$ such that, the product defined by Eq. \eqref{ant-R-B-to-anti-pre-Lie} defines on $\mathcal A$ a Hom-anti-pre-Lie superalgebra, then $\mathcal R$ satisfies Eq. \eqref{cond-strong-Rota-Baxter-Oper} and the following equation:
\begin{equation}\label{Cor-R-B-Hom-anti-pre-Lie}
[[\mathcal R(x),\mathcal R(y)]+\mathcal R([\mathcal R(x),y]+[x,\mathcal R(y)]),\alpha(z)]=0,\forall x,y,z\in\mathfrak g.
\end{equation}
\end{cor}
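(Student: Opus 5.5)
The plan is to read the statement as the special case $V=\mathfrak g$, $\rho=\mathfrak{ad}$, $\beta=\alpha$, $T=\mathcal R$ of Theorem \ref{Hom-ant-pre-Lie-by-ant-O-oper}. For the converse, I would verify the two anti-pre-Lie super-identities directly by expanding them through the Hom super-Jacobi identity. Throughout, $\mathcal R$ is taken to commute with $\alpha$, as in the definition of anti-Rota-Baxter operators; the converse also needs this to move $\alpha$ through $\mathcal R$.

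\emph{Direct part.} For the adjoint representation $(\mathfrak g,\mathfrak{ad},\alpha)$, the anti-super-$\mathcal O$-operator condition for $T=\mathcal R$ reads
$$[\mathcal R(x),\mathcal R(y)]=-\mathcal R\big([\mathcal R(x),y]+[x,\mathcal R(y)]\big).$$
This holds because $\mathfrak{ad}_{\mathcal R(y)}x=[\mathcal R(y),x]=-(-1)^{|x||y|}[x,\mathcal R(y)]$, so the graded term $(-1)^{|x||y|}\rho(T(y))x$ becomes $-[x,\mathcal R(y)]$. Strongness of $\mathcal R$ as an anti-Rota-Baxter operator is exactly strongness of $T$ in Theorem \ref{Hom-ant-pre-Lie-by-ant-O-oper}. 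That theorem then says $u\circ v=-\mathfrak{ad}_{\mathcal R(u)}v=-[\mathcal R(u),v]$ defines a Hom-anti-pre-Lie superalgebra, which is the first claim.

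\emph{Converse.} Assume $x\circ y=-[\mathcal R(x),y]$ is Hom-anti-pre-Lie. First I compute
$$[y,x]_\circ=-[\mathcal R(y),x]+(-1)^{|x||y|}[\mathcal R(x),y]=(-1)^{|x||y|}\big([\mathcal R(x),y]+[x,\mathcal R(y)]\big).$$
For the left side of \eqref{cond-Hom-ant-pre-Lie1}, I use $\mathcal R\alpha=\alpha\mathcal R$:
$$\alpha(x)\circ(y\circ z)-(-1)^{|x||y|}\alpha(y)\circ(x\circ z)=[\alpha\mathcal R(x),[\mathcal R(y),z]]-(-1)^{|x||y|}[\alpha\mathcal R(y),[\mathcal R(x),z]].$$
By the Hom super-Jacobi identity \eqref{H-sJ}, this equals $[[\mathcal R(x),\mathcal R(y)],\alpha(z)]$. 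The right side of \eqref{cond-Hom-ant-pre-Lie1} is $-(-1)^{|x||y|}[\mathcal R([\mathcal R(x),y]+[x,\mathcal R(y)]),\alpha(z)]$.

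Equating the two sides and clearing the sign should yield \eqref{Cor-R-B-Hom-anti-pre-Lie}. Next, substituting $[x,y]_\circ$ into the cyclic identity \eqref{cond-Hom-ant-pre-Lie2} expresses it as a graded cyclic sum of terms $[\mathcal R([\mathcal R(x),y]+[x,\mathcal R(y)]),\alpha(z)]$. Using the relation just obtained, each such term is replaced by $-[[\mathcal R(x),\mathcal R(y)],\alpha(z)]$. This turns \eqref{cond-Hom-ant-pre-Lie2} into the strong condition \eqref{cond-strong-Rota-Baxter-Oper}, which I expect to be the cyclic identity on $[[\mathcal R(x),\mathcal R(y)],\alpha(z)]$.

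\emph{Main obstacle.} I expect the difficulty to be sign bookkeeping, not any conceptual point. The Koszul signs $(-1)^{|x||y|}$ from $[y,x]_\circ$ versus $[x,y]_\circ$ must match the particular cyclic sign convention of \eqref{cond-Hom-ant-pre-Lie2}, whose weights are $(-1)^{|x||z|}$, $(-1)^{|x||y|}$ and $(-1)^{|y||z|}$. If the sign in \eqref{Cor-R-B-Hom-anti-pre-Lie} does not come out as printed, I would first recheck it with all elements even. That case reduces to the known non-graded result of Liu and Bai, and would fix the overall sign before reinserting the parity factors.
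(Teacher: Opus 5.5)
Your overall route is the natural one (the paper quotes this corollary without proof, and its direct part is the adjoint case of Theorem \ref{Hom-ant-pre-Lie-by-ant-O-oper}). However, the converse breaks at the step ``equating the two sides and clearing the sign''. You computed the right side of \eqref{cond-Hom-ant-pre-Lie1}, as printed in Section 3, to be $-(-1)^{|x||y|}[\mathcal R([\mathcal R(x),y]+[x,\mathcal R(y)]),\alpha(z)]$. Equating it with $[[\mathcal R(x),\mathcal R(y)],\alpha(z)]$ gives
\[
\big[[\mathcal R(x),\mathcal R(y)]+(-1)^{|x||y|}\mathcal R\big([\mathcal R(x),y]+[x,\mathcal R(y)]\big),\alpha(z)\big]=0 .
\]
Here $(-1)^{|x||y|}$ is a relative sign between the two summands, so no overall sign change removes it. The result differs from \eqref{Cor-R-B-Hom-anti-pre-Lie} exactly when $x$ and $y$ are both odd. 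Your fallback of checking with all elements even cannot detect this, because the two versions coincide there. In the next step you replace $[\mathcal R([\mathcal R(x),y]+[x,\mathcal R(y)]),\alpha(z)]$ by $-[[\mathcal R(x),\mathcal R(y)],\alpha(z)]$, but that is the target relation, not the one you derived. With the derived relation, \eqref{cond-Hom-ant-pre-Lie2} produces the wrong cyclic signs: for $|x|=|y|=\bar 1$, $|z|=\bar 0$ you get the pattern $+,+,-$ instead of the $+,-,+$ of \eqref{cond-strong-Rota-Baxter-Oper}.

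The source is an inconsistency in the paper. The Section 3 display \eqref{cond-Hom-ant-pre-Lie1} omits the factor $(-1)^{|x||y|}$ that appears in \eqref{Hom-anti-pre-Lie1} in the introduction. The introduction's version is the correct one: it is precisely \eqref{repres-Hom-Lie2} for $\rho=-\mathfrak L_\circ$, so its right side is $(-1)^{|x||y|}[y,x]_\circ\circ\alpha(z)=-[x,y]_\circ\circ\alpha(z)$. With the Section 3 version, even the direct part would force $[[\mathcal R(x),\mathcal R(y)],\alpha(z)]=0$ for odd $x,y$. Using the correct form, the right side becomes $-[\mathcal R([\mathcal R(x),y]+[x,\mathcal R(y)]),\alpha(z)]$, and your Hom super-Jacobi computation gives \eqref{Cor-R-B-Hom-anti-pre-Lie} exactly. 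Then $[x,y]_\circ\circ\alpha(z)=[\mathcal R([\mathcal R(x),y]+[x,\mathcal R(y)]),\alpha(z)]=-[[\mathcal R(x),\mathcal R(y)],\alpha(z)]$, and multiplying \eqref{cond-Hom-ant-pre-Lie2} by $-(-1)^{|x||z|}$ yields exactly \eqref{cond-strong-Rota-Baxter-Oper}. This computation is reversible and uses only \eqref{H-sJ} and $\mathcal R\alpha=\alpha\mathcal R$, so it also proves the direct part. That is cleaner than invoking Theorem \ref{Hom-ant-pre-Lie-by-ant-O-oper}, because \eqref{repres-Hom-Lie1} for $(\mathfrak g,\mathfrak{ad},\alpha)$ amounts to multiplicativity of $\alpha$, which the statement does not assume.
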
    
\begin{df}(\cite{wi-oth})
Let $(V,\rho,\beta)$ be a representation of a Hom-Lie superalgebra $(\mathcal A,[\cdot,\cdot],\alpha)$. An even linear map $T:V\to\mathcal A$ is called an \textbf{anti-super-$\mathcal{O}$-operator} on $(\mathcal A,[\cdot,\cdot],\alpha)$ associated with $(V,\rho,\beta)$ if it satisfies
\begin{eqnarray}
T\circ\beta&=&\alpha\circ T,\label{cond-ant-O-oper-Hom-Lie1} \\
\lbrack T(u),T(v)\rbrack&=&T\big((-1)^{|u||v|}\rho(T(v))u-\rho(T(u))v\big),\;\forall u,v\in\mathcal H(V).\label{cond-ant-O-oper-Hom-Lie2}
\end{eqnarray}
an anti-super-$\mathcal O$-operator $T$ is called \textbf{strong} if it satisfies:
\begin{equation}\label{cond-strong-O-Oper}
\rho([T(u),T(v)])\beta(w)+(-1)^{|u|(|v|+|w|)}\rho([T(v),T(w)])\beta(u)+(-1)^{|w|(|u|+|v|)}\rho([T(w),T(u)])\beta(v)=0,  
\end{equation}
for all $ u,v,w\in\mathcal H(V)$.\\
In particular, an anti-super-$\mathcal O$-operator $\mathcal R$ of $(\mathcal A,[\cdot,\cdot],\alpha)$ associated with the adjoint representation $(\mathcal A,\mathfrak{ad},\alpha)$ is called an \textbf{anti-Rota-Baxter operator (of weight zero)}, that is, $\mathcal R:\mathcal A\to\mathcal A$ is an even linear map commuting with $\alpha$ satisfying:
\begin{equation}\label{cond-Rota-Baxter-Oper}
 [\mathcal R(y),\mathcal R(x)]= (-1)^{|x||y|}\mathcal R([\mathcal R(x),y]+[x,\mathcal R(y)]),\;\forall x,y\in\mathcal H(\mathcal A).  
\end{equation}
An anti-Rota-Baxter operator $\mathcal R$ is called \textbf{strong}, if it satisfies
\begin{equation}\label{cond-strong-Rota-Baxter-Oper}
[[\mathcal R(x),\mathcal R(y)],\alpha(z)]+(-1)^{|x|(|y|+|z|)} [[\mathcal R(y),\mathcal R(z)],\alpha(x)]+(-1)^{|z|(|x|+|y|)}[[\mathcal R(z),\mathcal R(x)],\alpha(y)]=0,   
\end{equation}
for all $x,y,z\in\mathcal H(\mathcal A)$.
\end{df}

\begin{prop}\label{inver-O-oper}
 An invertible super-anti-$\mathcal{O}$-operator of a Hom-Lie superalgebra is automatically strong.   
\end{prop}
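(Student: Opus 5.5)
The plan mirrors Lemma \ref{inv-O-op-H-ass-aut-strong}. Let $T:V\to\mathcal A$ be an invertible anti-super-$\mathcal O$-operator on $(\mathcal A,[\cdot,\cdot],\alpha)$ with respect to $(V,\rho,\beta)$. Define $\circ_V$ by Eq. \eqref{ant-O-oper-to-anti-pre-Lie}. By Theorem \ref{Hom-ant-pre-Lie-by-ant-O-oper}, $\circ_V$ already satisfies Eq. \eqref{cond-Hom-ant-pre-Lie1}, and $T$ is strong if and only if $(V,\circ_V,\beta)$ is a Hom-anti-pre-Lie superalgebra. So it suffices to verify Eq. \eqref{cond-Hom-ant-pre-Lie2} for $\circ_V$, and I will reduce this to a statement inside $\mathcal A$ using the invertibility of $T$.

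\textbf{Step 1: pull the commutator back to $\mathcal A$.} From Eq. \eqref{cond-ant-O-oper-Hom-Lie2},
\[
[u,v]_{\circ_V}=-\rho(T(u))v+(-1)^{|u||v|}\rho(T(v))u=T^{-1}\big([T(u),T(v)]\big),
\]
so $T$ intertwines $[\cdot,\cdot]_{\circ_V}$ with $[\cdot,\cdot]$. Together with $T\beta=\alpha T$, this shows that $T$ is an isomorphism of Hom-algebras from $(V,[\cdot,\cdot]_{\circ_V},\beta)$ onto $(\mathcal A,[\cdot,\cdot],\alpha)$.

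\textbf{Step 2: rewrite the strong condition.} Each term of Eq. \eqref{cond-Hom-ant-pre-Lie2} for $\circ_V$ has the form
\[
[u,v]_{\circ_V}\circ_V\beta(w)=-\rho\big(T([u,v]_{\circ_V})\big)\beta(w)=-\rho([T(u),T(v)])\beta(w).
\]
Hence Eq. \eqref{cond-Hom-ant-pre-Lie2} is, up to the overall sign and the sign convention relating the two cyclic sums, exactly Eq. \eqref{cond-strong-O-Oper}. This step restates the equivalence of Theorem \ref{Hom-ant-pre-Lie-by-ant-O-oper}, so the real content lies in proving Eq. \eqref{cond-strong-O-Oper}.

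\textbf{Step 3: prove the cyclic identity.} I would take the other route: use the equivalence (1)$\Leftrightarrow$(2) of the cited proposition from \cite{wi-oth}. It states that, given Eq. \eqref{cond-Hom-ant-pre-Lie1}, Eq. \eqref{cond-Hom-ant-pre-Lie2} is equivalent to Eq. \eqref{cond-equiv1}, namely
\[
\circlearrowleft_{u,v,w}(-1)^{|u||w|}\beta(u)\circ_V[v,w]_{\circ_V}=0 .
\]
Substituting $\circ_V$ and $[v,w]_{\circ_V}=T^{-1}[T(v),T(w)]$, the term $\beta(u)\circ_V[v,w]_{\circ_V}$ becomes $-\rho(\alpha T(u))\,T^{-1}[T(v),T(w)]$.

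Since $T$ is invertible, it suffices to check the identity after applying $T$. Using Eq. \eqref{cond-ant-O-oper-Hom-Lie2} with the pair $(\beta(u),T^{-1}[T(v),T(w)])$, the quantity $T\big(\rho(T(a))b\big)$ can be expressed through brackets. Concretely, the operator-identity $[T(a),T(b)]=-T\big(\rho(T(a))b-(-1)^{|a||b|}\rho(T(b))a\big)$ converts each $T$-image into $[\alpha T(u),[T(v),T(w)]]$ plus a term of the form $T\rho([T(v),T(w)])\beta(u)$. The double brackets sum to zero under the cyclic sum by the Hom super-Jacobi identity \eqref{H-sJ} with $x=T(u)$, $y=T(v)$, $z=T(w)$. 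The leftover terms reproduce the other cyclic sum, i.e. the $T$-image of Eq. \eqref{cond-Hom-ant-pre-Lie2}, with opposite sign. Combining this with the relation between Eq. \eqref{cond-Hom-ant-pre-Lie2} and Eq. \eqref{cond-equiv1} from \cite{wi-oth} forces both cyclic sums to vanish.

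\textbf{Main obstacle.} The hard part is Step 3: tracking the Koszul signs so that the double-bracket terms assemble exactly into the Hom super-Jacobi identity and the leftover terms come back with the correct coefficient (nonzero, not cancelling trivially). If this direct bookkeeping becomes unwieldy, my fallback is the semidirect-product argument. Transport the bracket to $V$ via $T$, which makes $(V,[\cdot,\cdot]_{\circ_V},\beta)$ a Hom-Lie superalgebra isomorphic to $\mathcal A$. Then check that $(V,-\mathfrak L_{\circ_V},\beta)$ is a representation of it. The second representation axiom follows from Eq. \eqref{cond-Hom-ant-pre-Lie1}. The first axiom, $\mathfrak L(\beta u)\beta=\beta\mathfrak L(u)$, follows from Eq. \eqref{repres-Hom-Lie1} and $T\beta=\alpha T$. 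Then invoke the equivalence (1)$\Leftrightarrow$(3) of the cited proposition to conclude that $(V,\circ_V,\beta)$ is Hom-anti-pre-Lie, and hence that $T$ is strong by Theorem \ref{Hom-ant-pre-Lie-by-ant-O-oper}.
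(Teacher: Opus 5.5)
Your fallback argument is essentially the paper's proof. The paper observes $[u,v]_{\circ_V}=T^{-1}([T(u),T(v)])$, concludes that $(V,[\cdot,\cdot]_{\circ_V},\beta)$ is a Hom-Lie superalgebra, so $\circ_V$ is Hom-Lie-admissible, and then invokes Theorem~\ref{Hom-ant-pre-Lie-by-ant-O-oper}. Your version checks that $-\mathfrak L_{\circ_V}$ is a representation and then uses (3)$\Rightarrow$(1) of the cited proposition. This makes explicit the step the paper leaves implicit: Eq.~\eqref{cond-Hom-ant-pre-Lie1} plus Hom-Lie-admissibility yields Eq.~\eqref{cond-Hom-ant-pre-Lie2}.

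Your primary Step 3, however, does not work as written, because of a sign. Write $S_1=\circlearrowleft_{u,v,w}(-1)^{|u||w|}\beta(u)\circ_V[v,w]_{\circ_V}$, and let $S_2$ be the cyclic sum in Eq.~\eqref{cond-Hom-ant-pre-Lie2} for $\circ_V$. The formal identity $a\circ_V b=[a,b]_{\circ_V}+(-1)^{|a||b|}b\circ_V a$ gives
\[
S_1=J+S_2,\qquad J=\circlearrowleft_{u,v,w}(-1)^{|u||w|}\,[\beta(u),[v,w]_{\circ_V}]_{\circ_V},
\]
and $T(J)=0$ by Step 1 and \eqref{H-sJ}. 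So after the Jacobi cancellation the leftover terms give $+S_2$, not $-S_2$; they have the opposite sign only relative to the sum in \eqref{cond-strong-O-Oper}.

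Independently, pair the six terms of $S_1$ and use Eq.~\eqref{cond-Hom-ant-pre-Lie1} in the form that $\circ_V$ actually satisfies, namely $\beta(u)\circ_V(v\circ_V w)-(-1)^{|u||v|}\beta(v)\circ_V(u\circ_V w)=-[u,v]_{\circ_V}\circ_V\beta(w)$. This is the Introduction's version; the one displayed in Section 3 is off by $(-1)^{|u||v|}$. The pairing gives $S_1=-S_2$.

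Only because these two relations carry opposite signs do you get $2S_2=0$, hence that $T$ is strong. With the sign you state, both relations would read $S_1=-S_2$, and the ``combining'' step yields nothing. Moreover, the cited proposition only gives the equivalence $S_1=0\Leftrightarrow S_2=0$. That cannot be combined with a linear relation to force vanishing; you need the identity $S_1+S_2=0$ itself.

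Also, your second use of \eqref{cond-ant-O-oper-Hom-Lie2} in Step 3 is unnecessary, since $T([\beta(u),X]_{\circ_V})=[\alpha T(u),T(X)]$ is already Step 1. Once corrected, Step 3 is just a hands-on proof of the same fact that the paper and your fallback rely on.
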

\begin{proof}
Let$(\mathcal A,[\cdot,\cdot],\alpha)$ be a Hom-Lie superalgebra with representation $(V,\rho,\beta)$ and $T:V\to\mathcal A$ be an anti-super-$\mathcal{O}$-operator. Define a bilinear map $\circ_V:V\times V\to V$ by Eq \eqref{ant-O-oper-to-anti-pre-Lie}.Then
\begin{align*}
 [u,v]_{\circ_V}&=u_{\circ_V} v-(-1)^{|u||v|}v_{\circ_V} u  \\
 &=(-1)^{|u||v|}\rho(T(v))u-\rho(T(u))v \\
 &=T^{-1}([T(u),T(v)])
\end{align*}
Furthermore,
\begin{align*}
 [[u,v]_{\circ_V},\beta(w)]_{\circ_V}&=[T^{-1}([T(u),T(v)]),\beta(w)]_{\circ_V} \\
 &=T^{-1}([[T(u),T(v)],\alpha(T(w))])
\end{align*}
Hence, $(V,[\cdot,\cdot]_{\circ_ V},\beta)$ is a Hom-Lie superalgebra.It follows that,$(V,\circ_V,\beta)$ is a Hom-Lie-admissible. Applying Theorem \ref{Hom-ant-pre-Lie-by-ant-O-oper}, we conclude that $T$ is a strong anti-super-$\mathcal{O}$-operator.
\end{proof}
\begin{df}
Let $(\mathcal A,[\cdot,\cdot],\alpha)$ be a Hom-Lie superalgebra, and $\mathfrak B$ an even
super-commutative bilinear form on $\mathcal A$, such that for any $x,y,z\in \mathcal H(\mathcal A)$, the following condition hold:.
\begin{equation}\label{sup-comm-2-cocyc}
\mathfrak B(\alpha(x),[y,z])
+(-1)^{|x|(|y|+|z|)}\mathfrak B(\alpha(y),[z,x])
+(-1)^{|z|(|x|+|y|)}\mathfrak B(\alpha(z),[x,y])=0.
\end{equation}
Then the bilinear form $\mathfrak B$ is called a
\textbf{super-commutative $2$-cocycle} on the Hom-Lie superalgebra $\mathcal A$.    
\end{df}
\begin{thm}\label{ant-h-pr-Lie-from-nondeg-bil}
  Let $\mathfrak{B}$ be a nondegenerate super-commutative $2$-cocycle on a multiplicative Hom-Lie superalgebra $(\mathcal A,[\cdot,\cdot],\alpha)$. Then there exists a compatible Hom-anti-pre-Lie superalgebra structure $\circ$ on $(\mathcal A,[\cdot,\cdot],\alpha)$ given by \begin{equation}\label{inv-Hom-anti-pre-Lie}\mathfrak{B}(\alpha(x)\circ \alpha(y),\alpha^2(z))=(-1)^{|x||y|}\mathfrak{B}(\alpha(y),[x,z]), \;\;\forall x,y,z\in \mathcal H(\mathcal A). 
  \end{equation} 
  Conversely, let $(\mathcal A,\circ,\alpha)$ be a Hom-anti-pre-Lie superalgebra and $(\mathcal A, [\cdot,\cdot],\alpha)$ be the sub-adjacent Hom-Lie
superalgebra. Then the natural nondegenerate symmetric bilinear form $\mathfrak B_d$ defined by Eq. \eqref{natur-biln-form-H-assoc-sup} is a super-commutative $2$-cocycle on the Hom-Lie superalgebra $\mathcal A\ltimes_{-\mathfrak L_\circ^\star}\mathcal A^\ast$.
\end{thm}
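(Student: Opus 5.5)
The plan is to mirror the proof of Theorem \ref{from-h-ass-to-H-anti-Zinb-via-comm-Connes-coc}, replacing the Hom-associative data by Hom-Lie data. Define $T:\mathcal A\to\mathcal A^\ast$ by $\langle T(x),y\rangle=\mathfrak B(x,y)$ as in Eq. \eqref{eq:Hom-ass-T}. Since $\mathfrak B$ is nondegenerate, $T$ is an even linear isomorphism, and supersymmetry of $\mathfrak B$ lets us move arguments across the pairing with controlled signs. Since $\alpha$ must be invertible for the coadjoint representation $(\mathcal A^\ast,\mathfrak{ad}^\star,(\alpha^{-1})^\ast)$ to make sense, I assume $(\mathcal A,[\cdot,\cdot],\alpha)$ is regular as well as multiplicative. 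I would first check the intertwining relation $T\circ\alpha=(\alpha^{-1})^\ast\circ T$ up to the twisting conventions, which is what condition \eqref{cond-ant-O-oper-Hom-Lie1} requires for $T^{-1}$. If this fails with a bare $\alpha$, I would impose or derive the invariance $\mathfrak B(\alpha(x),\alpha(y))=\mathfrak B(x,y)$ of Eq. \eqref{cond-mult-bil-form} on the relevant arguments, as is done implicitly in the associative case.

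The key step is to show that $T^{-1}:\mathcal A^\ast\to\mathcal A$ is an invertible anti-super-$\mathcal O$-operator on $(\mathcal A,[\cdot,\cdot],\alpha)$ with respect to the coadjoint representation. Write $a^\ast=T(x)$ and $b^\ast=T(y)$, and pair $T([x,y])$ with $\alpha(z)$. By supersymmetry, the three terms of the cocycle identity \eqref{sup-comm-2-cocyc} become $\langle T([x,y]),\alpha(z)\rangle$ and two terms of the form $\pm\langle \mathfrak{ad}^\star(y)T(x),\alpha(z)\rangle$ and $\pm\langle\mathfrak{ad}^\star(x)T(y),\alpha(z)\rangle$. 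To get these, use Eq. \eqref{dual-adjoint-rep} together with multiplicativity of $\alpha$, which converts $[\alpha^{-1}(\cdot),\alpha^{-2}(\cdot)]$ back into $\alpha^{-2}[\alpha(\cdot),\cdot]$. Nondegeneracy then yields exactly Eq. \eqref{cond-ant-O-oper-Hom-Lie2}:
\[
[T^{-1}(a^\ast),T^{-1}(b^\ast)]=T^{-1}\big((-1)^{|a||b|}\mathfrak{ad}^\star(T^{-1}(b^\ast))a^\ast-\mathfrak{ad}^\star(T^{-1}(a^\ast))b^\ast\big).
\]
The main obstacle is this sign and twist bookkeeping, in particular getting the powers of $\alpha$ to line up between the cocycle (evaluated at $\alpha(z)$) and the twisted coadjoint action. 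I would handle it by carrying out the computation first with $z$ replaced by $\alpha^2(z)$ and then using the bijectivity of $\alpha$.

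By Proposition \ref{inver-O-oper}, $T^{-1}$ is automatically strong. Theorem \ref{Hom-ant-pre-Lie-by-ant-O-oper} then gives a Hom-anti-pre-Lie structure $\xi\circ_V\eta=-\mathfrak{ad}^\star(T^{-1}\xi)\eta$ on $\mathcal A^\ast$. Transport it to $\mathcal A$ via $T$ by setting $x\circ y=-T^{-1}(\mathfrak{ad}^\star(x)T(y))$. Applying $T^{-1}$ to the anti-$\mathcal O$-operator identity shows $[x,y]=x\circ y-(-1)^{|x||y|}y\circ x$, so the structure is compatible. Finally, the formula \eqref{inv-Hom-anti-pre-Lie} is verified by evaluating $\langle T(\alpha(x)\circ\alpha(y)),\alpha^2(z)\rangle$ with Eq. \eqref{dual-adjoint-rep}, exactly as in the last display of the proof of Theorem \ref{from-h-ass-to-H-anti-Zinb-via-comm-Connes-coc}.

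For the converse, consider the semidirect product $\mathcal A\ltimes_{-\mathfrak L_\circ^\star}\mathcal A^\ast$. It is a Hom-Lie superalgebra by Proposition \ref{sumdirecthomLie}, because $(\mathcal A,-\mathfrak L_\circ,\alpha)$ is a representation of the sub-adjacent Hom-Lie superalgebra and hence so is its dual. Then check \eqref{sup-comm-2-cocyc} for $\mathfrak B_d$ directly on elements $(x,a^\ast),(y,b^\ast),(z,c^\ast)$. Terms with two or more dual components vanish. The remaining terms, linear in one dual element, reduce to $\langle c^\ast,\cdot\rangle$ applied to a cyclic expression of the form $\alpha$-twisted $[x,y]\pm x\circ y\pm y\circ x$. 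This expression vanishes by the definition $[x,y]=x\circ y-(-1)^{|x||y|}y\circ x$ once the signs from $\mathfrak L_\circ^\star$ are accounted for, and supersymmetry of $\mathfrak B_d$ is immediate.
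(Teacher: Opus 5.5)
Your proposal follows the paper's proof step for step. It uses the same map $T$ and the same claim that $T^{-1}$ is an invertible anti-super-$\mathcal O$-operator for the coadjoint representation $(\mathcal A^\ast,\mathfrak{ad}^\star,(\alpha^{-1})^\ast)$. Strongness comes automatically from Proposition \ref{inver-O-oper}, the structure is transported along $T$, and \eqref{inv-Hom-anti-pre-Lie} is checked by the same final evaluation. The paper gives no argument for the converse; your direct check on $\mathcal A\ltimes_{-\mathfrak L_\circ^\star}\mathcal A^\ast$ is the natural one.

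The point you left open must be settled by \emph{imposing} \eqref{cond-mult-bil-form}. It cannot be derived, and without it the first half of the statement is false.

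Combining \eqref{inv-Hom-anti-pre-Lie} with \eqref{sup-comm-2-cocyc} and supersymmetry gives
$\mathfrak B(\alpha(x)\circ\alpha(y)-(-1)^{|x||y|}\alpha(y)\circ\alpha(x),\alpha^2(z))=\mathfrak B([x,y],\alpha(z))$.
Compatibility together with multiplicativity, however, requires this quantity to equal $\mathfrak B(\alpha([x,y]),\alpha^2(z))$.

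A counterexample shows these can differ. Take:
\begin{itemize}
\item $\mathcal A=\mathcal A_{\bar 0}=\mathrm{span}\{e_1,e_2\}$ with $[e_1,e_2]=2e_2$;
\item $\alpha(e_1)=e_1$ and $\alpha(e_2)=2e_2$;
\item $\mathfrak B(e_1,e_2)=1$ and $\mathfrak B(e_1,e_1)=\mathfrak B(e_2,e_2)=0$.
\end{itemize}
This is a regular multiplicative Hom-Lie superalgebra, and $\mathfrak B$ is nondegenerate and supersymmetric. Identity \eqref{sup-comm-2-cocyc} holds automatically, because its left-hand side is an alternating trilinear form on a $2$-dimensional space. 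Yet at $x=z=e_1$, $y=e_2$ the two quantities are $2$ and $4$. Hence no compatible $\circ$ satisfies \eqref{inv-Hom-anti-pre-Lie}.

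The paper's proof uses \eqref{cond-mult-bil-form} silently, in the step $T(\alpha(x))=(\alpha^{-1})^\ast T(x)$. It also never verifies \eqref{cond-ant-O-oper-Hom-Lie1} for $T^{-1}$, which is the same condition. So your argument goes through once regularity of $\alpha$ and \eqref{cond-mult-bil-form} are added as hypotheses, which is the setting of Corollary \ref{ant-h-pr-Lie-from-mult-nondeg-bil}.

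The converse has a similar hidden requirement: $(\mathcal A,\circ,\alpha)$ must be regular and multiplicative. This is needed for the dual representation to exist. It is also needed because the terms linear in $c^\ast$ reduce, up to sign, to $\langle c^\ast,\alpha^{-1}([x,y])-[\alpha^{-1}(x),\alpha^{-1}(y)]\rangle$, which vanishes only when $\alpha$ is a bracket morphism.
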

\begin{proof}
Define a linear map $T: \mathcal A\rightarrow \mathcal A^{*}$ by \begin{equation}\label{eq:HomT} \langle T(x),y\rangle=\mathfrak{B}(x,y),\qquad \forall x,y\in \mathcal H(\mathcal A). \end{equation} Since $\mathfrak B$ is nondegenerate, the map $T$ is invertible. Let $a^{*},b^{*}\in \mathcal A^{*}$. Then there exist $x,y\in\mathcal H(\mathcal A)$ such that $a^{*}=T(x)$ and $b^{*}=T(y)$. For any $z\in\mathcal H(\mathcal A)$, we compute  $$\mathfrak B([x,y],\alpha(z)) =\langle T([x,y]),\alpha(z)\rangle =\langle T([T^{-1}(a^{*}),T^{-1}(b^{*})]),\alpha(z)\rangle.$$
By Eq.\eqref{dual-adjoint-rep}, we have
\begin{eqnarray*}
\mathfrak B([y,z],\alpha(x)) &=&(-1)^{|x|(|y|+|z|)}\langle T(\alpha(x)),[y,z]\rangle =-(-1)^{|x||z|}\langle \mathfrak{ad}^{\star}(\alpha(y))T(\alpha(x)),\alpha^2(z)\rangle\\
&=&-(-1)^{|x||z|}\langle \mathfrak{ad}^{\star}(\alpha(y))(\alpha^{-1})^*T(x),\alpha^2(z)\rangle\\
&=&-(-1)^{|x||z|}\langle (\alpha^{-1})^*\mathfrak{ad}^{\star}(y)T(x),\alpha^2(z)\rangle\\
&=&-(-1)^{|x||z|}\langle (\mathfrak{ad}^{\star}(y)T(x),\alpha(z)\rangle\\
&=&-(-1)^{|a||z|}\langle \mathfrak{ad}^{\star}(T^{-1}(b^{*}))a^{*},\alpha(z)\rangle.
\end{eqnarray*}
Similarly we have
$$\mathfrak B([z,x],\alpha(y))=-(-1)^{|x||z|}\mathfrak B([x,z],\alpha(y))=(-1)^{|z|(|a|+|b|)}\langle \mathfrak{ad}^{\star}(T^{-1}(a^{*}))b^{*},\alpha(z)\rangle.$$ 
   
Since $\mathfrak B$ is a super-commutative $2$-cocycle on the Hom-Lie superalgebra $(\mathcal A,[\cdot,\cdot],\alpha)$, we have \[ \mathfrak B([x,y],\alpha(z)) +(-1)^{|x|(|y|+|z|)}\mathfrak B([y,z],\alpha(x)) +(-1)^{|z|(|x|+|y|)}\mathfrak B([z,x],\alpha(y))=0 . \] Therefore \[ [T^{-1}(a^{*}),T^{-1}(b^{*})] = T^{-1}\big((-1)^{|a||b|} \mathfrak{ad}^{\star}(T^{-1}(b^{*}))(a^{*}) - \mathfrak{ad}^{\star}(T^{-1}(a^{*}))(b^{*}) \big). \] Thus $T^{-1}$ is an invertible anti-$\mathcal O$-operator of the Hom-Lie superalgebra $(\mathcal A,[\cdot,\cdot],\alpha)$ associated with the representation $(\mathcal A^{*},\mathfrak{ad}^{\star},(\alpha^{-1})^{*})$ which gives by Proposition \ref{inver-O-oper} that $T^{-1}$ is automatically strong super-anti-$\mathcal O$-operator on $(\mathcal A,[\cdot,\cdot],\alpha)$. By Corollary \ref{Hom-ant-pre-Lie-by-ant-R-B}, there exists a compatible Hom-anti-pre-Lie superalgebra structure $\circ$ on $(\mathcal A,[\cdot,\cdot],\alpha)$ defined by \begin{equation}\label{eq:Homproduct} x\circ y=-T^{-1}\big(\mathfrak{ad}^{\star}(x)T(y)\big),\qquad \forall x,y\in\mathcal H(\mathcal A). \end{equation} Finally, for any $x,y,z\in\mathcal H(\mathcal A)$, we obtain 
    \begin{eqnarray*}
     \mathfrak B(\alpha(x)\circ\alpha( y),\alpha^2(z)) &=&\langle T(\alpha(x)\circ\alpha( y)),\alpha^2(z)\rangle\\&=&-\langle \mathfrak{ad}^{\star}(\alpha(x))T(\alpha(y)),\alpha^2(z)\rangle\\&=&(-1)^{|x||y|}\langle T(\alpha(y)),[x,z]\rangle \\&=&(-1)^{|x||y|}\mathfrak B(\alpha(y),[x,z]). 
    \end{eqnarray*}
    Hence the Hom-anti-pre-Lie product $\circ$ satisfies \[ \mathfrak B(\alpha(x)\circ\alpha( y),\alpha^2(z))=(-1)^{|x||y|}\mathfrak B(\alpha(y),[x,z]), \qquad \forall x,y,z\in\mathcal H(\mathcal A). \] Therefore the conclusion follows. 
    \end{proof}
    \begin{cor}\label{ant-h-pr-Lie-from-mult-nondeg-bil}
  Let $\mathfrak{B}$ be a nondegenerate super-commutative $2$-cocycle on a multiplicative Hom-Lie superalgebra $(\mathcal A,[\cdot,\cdot],\alpha)$ satisfying Eq. \eqref{cond-mult-bil-form}. Then there exists a compatible Hom-anti-pre-Lie superalgebra structure $\circ$ on $(\mathcal A,[\cdot,\cdot],\alpha)$ given by \begin{equation}\label{mult-inv-Hom-anti-pre-Lie}\mathfrak{B}(x\circ y,\alpha(z))=(-1)^{|x||y|}\mathfrak{B}(\alpha(y),[x,z]), \;\;\forall x,y,z\in \mathcal H(\mathcal A). \end{equation} 
    Conversely, let $(\mathcal A,\circ,\alpha)$ be a Hom-anti-pre-Lie superalgebra and $(\mathcal A, [\cdot,\cdot],\alpha)$ be the sub-adjacent Hom-Lie
superalgebra. Then the natural nondegenerate symmetric bilinear form $\mathfrak B_d$ defined by Eq. \eqref{natur-biln-form-H-assoc-sup} is a super-commutative $2$-cocycle on the Hom-Lie superalgebra $\mathcal A\ltimes_{\mathfrak{ad}^\star}\mathcal A^\ast$.
        
    \end{cor}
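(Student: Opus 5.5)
Since the form $\mathfrak B$ now satisfies $\mathfrak B(\alpha(x),\alpha(y))=\mathfrak B(x,y)$, the plan is to deduce this corollary from Theorem \ref{ant-h-pr-Lie-from-nondeg-bil}. I will not redo the $\mathcal O$-operator argument. Multiplicativity with this invariance turns the twisted defining identity \eqref{inv-Hom-anti-pre-Lie} into the untwisted one \eqref{mult-inv-Hom-anti-pre-Lie}. First, nondegeneracy together with \eqref{cond-mult-bil-form} shows that $\alpha$ is injective, hence bijective since $\mathcal A$ is finite-dimensional. If $\alpha(x)=0$ then $\mathfrak B(x,y)=\mathfrak B(\alpha(x),\alpha(y))=0$ for all $y$, so $x=0$. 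So $\alpha^{-1}$ exists, and the coadjoint representation $\mathfrak{ad}^\star$ used in Theorem \ref{ant-h-pr-Lie-from-nondeg-bil} makes sense.

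Next, apply Theorem \ref{ant-h-pr-Lie-from-nondeg-bil} to get a compatible Hom-anti-pre-Lie structure $\circ$ with $\mathfrak B(\alpha(x)\circ\alpha(y),\alpha^2(z))=(-1)^{|x||y|}\mathfrak B(\alpha(y),[x,z])$. The main computation is to check that $\circ$ is multiplicative, i.e.\ $\alpha(x\circ y)=\alpha(x)\circ\alpha(y)$. Pairing $\alpha(x\circ y)$ against $\alpha^2(z)$ gives
\[
\mathfrak B(\alpha(x\circ y),\alpha^2(z))=\mathfrak B(x\circ y,\alpha(z)).
\]
The right side of \eqref{inv-Hom-anti-pre-Lie}, with $x,y,z$ replaced by $\alpha^{-1}$-images, reads $(-1)^{|x||y|}\mathfrak B(y,[\alpha^{-1}x,\alpha^{-1}z])=(-1)^{|x||y|}\mathfrak B(\alpha y,[x,z])$. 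Here I use that $\alpha$ is a Lie morphism and that $\mathfrak B$ is $\alpha$-invariant. Alternatively, use the explicit formula $x\circ y=-T^{-1}(\mathfrak{ad}^\star(x)T(y))$. Invariance of $\mathfrak B$ gives $T\alpha=(\alpha^{-1})^*T$, and the identity $\mathfrak{ad}^\star(\alpha x)(\alpha^{-1})^*=(\alpha^{-1})^*\mathfrak{ad}^\star(x)$ (already used in the proof of the theorem) then shows that $\alpha$ commutes with $\circ$. Once $\alpha(x\circ y)=\alpha(x)\circ\alpha(y)$ is established, rewrite \eqref{inv-Hom-anti-pre-Lie} as
\[
\mathfrak B(\alpha(x\circ y),\alpha^2(z))=\mathfrak B(x\circ y,\alpha(z))=(-1)^{|x||y|}\mathfrak B(\alpha(y),[x,z]),
\]
which is exactly \eqref{mult-inv-Hom-anti-pre-Lie}. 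Compatibility, $[x,y]_\circ=[x,y]$, is inherited from the theorem.

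For the converse I will check the cocycle identity \eqref{sup-comm-2-cocyc} for $\mathfrak B_d$ on $\mathcal A\ltimes_{\mathfrak{ad}^\star}\mathcal A^\ast$ directly, on elements $(x,a^*),(y,b^*),(z,c^*)$. By linearity and the evenness of $\mathfrak B_d$, only the mixed terms pairing $\mathcal A$ with $\mathcal A^*$ survive. I expect three groups of terms; each group should collapse to a pairing $\langle c^*,\cdot\rangle$ against a cyclic expression in $x,y,z$ built from $-\mathfrak L_\circ$ and the bracket $[\cdot,\cdot]_\circ$. These vanish by \eqref{cond-Hom-ant-pre-Lie2}, or equivalently \eqref{cond-equiv1}, after moving $\alpha^{-1},\alpha^{-2}$ across the pairing. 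I regard this converse as essentially a restatement of the converse in Theorem \ref{ant-h-pr-Lie-from-nondeg-bil}. Note, however, that the semidirect product here is taken with $\mathfrak{ad}^\star$ rather than $-\mathfrak L_\circ^\star$, which changes which identity is invoked. The bookkeeping of signs and of the powers of $\alpha$ in \eqref{dual-adjoint-rep} is the main obstacle. I would first verify the case $c^*$ arbitrary, $a^*=b^*=0$, and then obtain the other cases by the cyclic supersymmetry of the expression.
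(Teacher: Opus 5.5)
Your forward direction is sound. Your own substitution already finishes it: replacing $x,y,z$ by $\alpha^{-1}(x),\alpha^{-1}(y),\alpha^{-1}(z)$ in \eqref{inv-Hom-anti-pre-Lie} and using \eqref{cond-mult-bil-form} together with multiplicativity of $[\cdot,\cdot]$ gives \eqref{mult-inv-Hom-anti-pre-Lie} directly. The detour through $\alpha(x\circ y)=\alpha(x)\circ\alpha(y)$ is therefore unnecessary, though correct. The paper gives no proof of this corollary; for the analogous Connes-cocycle corollary it simply says to rerun the earlier argument using \eqref{cond-mult-bil-form}. Deducing the formula from the statement of Theorem~\ref{ant-h-pr-Lie-from-nondeg-bil} is cleaner than that.

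The converse is where your plan fails, and the problem is not sign bookkeeping. On $\mathcal A\ltimes_{\mathfrak{ad}^\star}\mathcal A^*$ the bracket is built only from $[\cdot,\cdot]$ and $\mathfrak{ad}^\star$. The product $\circ$ never enters, so no $-\mathfrak L_\circ$ terms can appear, and neither \eqref{cond-Hom-ant-pre-Lie2} nor \eqref{cond-equiv1} has anything to act on. Moreover, $\mathfrak B_d$ is invariant on this semidirect product, as the paper records at the end of Section~\ref{Sec2}. Invariance together with supersymmetry makes the three terms of \eqref{sup-comm-2-cocyc} equal, so the cyclic sum is three times a single term.

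Your proposed first test case already exposes this. Take even $x,y\in\mathcal A$, even $c^*\in\mathcal A^*$, $X=(x,0)$, $Y=(y,0)$, $Z=(0,c^*)$ and $\tilde\alpha=\alpha\oplus(\alpha^{-1})^*$. Then Eq.~\eqref{dual-adjoint-rep} gives
\[
\begin{aligned}
&\mathfrak B_d(\tilde\alpha X,[Y,Z])+\mathfrak B_d(\tilde\alpha Y,[Z,X])+\mathfrak B_d(\tilde\alpha Z,[X,Y])\\
&\qquad=2\langle c^*,[\alpha^{-1}(x),\alpha^{-1}(y)]\rangle+\langle c^*,\alpha^{-1}([x,y])\rangle
=3\langle c^*,\alpha^{-1}([x,y])\rangle .
\end{aligned}
\]
This is nonzero as soon as the bracket is. For a concrete instance, take $\alpha=\mathrm{id}$ and the purely even anti-pre-Lie algebra with $e_1\circ e_1=e_1$, $e_2\circ e_1=-e_2$ and all other products zero. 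Its sub-adjacent bracket is $[e_1,e_2]=e_2$, and $X=e_1$, $Y=e_2$, $Z=e_2^*$ gives $3\neq 0$.

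So the converse, as literally stated, is false. It is also not a restatement of the converse of Theorem~\ref{ant-h-pr-Lie-from-nondeg-bil}, which uses $-\mathfrak L_\circ^\star$; the $\mathfrak{ad}^\star$ in the corollary should be read as $-\mathfrak L_\circ^\star$. With that representation, the same test produces $\langle c^*,\alpha^{-1}([x,y])-[\alpha^{-1}(x),\alpha^{-1}(y)]_\circ\rangle$. This vanishes by compatibility $[\cdot,\cdot]=[\cdot,\cdot]_\circ$ together with multiplicativity, not by the anti-pre-Lie identities. Those identities are needed only to make $(\mathcal A,-\mathfrak L_\circ,\alpha)$ a representation, so that the semidirect product is a Hom-Lie superalgebra at all. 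That corrected statement is what you should prove.
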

\begin{df}
A bilinear form $\mathfrak B$ on a Hom-anti-pre-Lie superalgebra $(\mathcal A,\circ,\alpha)$ is called \textbf{Invariant} if Eq. \eqref{inv-Hom-anti-pre-Lie} holds.    
\end{df}
\begin{cor}\label{cor:sup-sym-bil-foris-2-coc}
Any even super-symmetric bilinear form on a multiplicative Hom-anti-pre-Lie superalgebra  $(\mathcal A,\circ,\alpha)$ is a super-commutative $2$-cocycle on the sub-adjacent Hom-Lie superalgebra $(\mathfrak g(\mathcal A),[\cdot,\cdot],\alpha)$. Conversely, a nondegenerate super-commutative $2$-cocycle on a multiplicative Hom-Lie superalgebra $(\mathcal A,\circ,\alpha)$ is invariant on the compatible Hom-anti-pre-Lie superalgebra given by Eq. \eqref{inv-Hom-anti-pre-Lie}.    
\end{cor}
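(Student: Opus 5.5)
The converse part is immediate, so I will dispose of it first. If $\mathfrak B$ is a nondegenerate super-commutative $2$-cocycle on a multiplicative Hom-Lie superalgebra, Theorem \ref{ant-h-pr-Lie-from-nondeg-bil} produces the compatible Hom-anti-pre-Lie product $\circ$. That product is characterized by Eq. \eqref{inv-Hom-anti-pre-Lie}, which is exactly the definition of invariance of $\mathfrak B$ on $(\mathcal A,\circ,\alpha)$. So nothing beyond citing the theorem is needed.

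For the direct part I read the hypothesis as: $\mathfrak B$ is an even supersymmetric bilinear form which is \emph{invariant}, i.e. satisfies Eq. \eqref{inv-Hom-anti-pre-Lie}. Without invariance the statement cannot hold. The goal is to show that the cyclic sum
$$S(x,y,z)=\mathfrak B(\alpha(x),[y,z])+(-1)^{|x|(|y|+|z|)}\mathfrak B(\alpha(y),[z,x])+(-1)^{|z|(|x|+|y|)}\mathfrak B(\alpha(z),[x,y])$$
vanishes, where $[\cdot,\cdot]=[\cdot,\cdot]_\circ$.

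The plan is to convert every term of $S$ into a term of the form $\mathfrak B(\alpha(a)\circ\alpha(b),\alpha^2(c))$ using invariance, and then show these cancel.
\begin{enumerate}
\item Invariance read with $(x,y)$ interchanged gives
$$\mathfrak B(\alpha(x),[y,z])=(-1)^{|x||y|}\mathfrak B(\alpha(y)\circ\alpha(x),\alpha^2(z)).$$
\item Super-skew-symmetry of the bracket, $[y,z]=-(-1)^{|y||z|}[z,y]$, together with invariance once more, gives a second expression:
$$\mathfrak B(\alpha(x),[y,z])=-(-1)^{|y||z|+|x||z|}\mathfrak B(\alpha(z)\circ\alpha(x),\alpha^2(y)).$$
\item Averaging the two expressions and substituting into $S$ writes $2S$ as a signed sum of six terms $\mathfrak B(\alpha(a)\circ\alpha(b),\alpha^2(c))$, one for each ordered pair $(a,b)$ drawn from $\{x,y,z\}$.
\item These six terms pair up into $\mathfrak B([\alpha(a),\alpha(b)]_\circ,\alpha^2(c))$. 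By multiplicativity this equals $\mathfrak B(\alpha([a,b]_\circ),\alpha^2(c))$.
\end{enumerate}

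At this point two closing routes are available. The first uses supersymmetry and invariance again: this returns $\mathfrak B(\alpha^2(c),\alpha[a,b])$-type terms, which should be recognized as $-S$ evaluated at $(\alpha(x),\alpha(y),\alpha(z))$. The second uses the Hom-anti-pre-Lie identity \eqref{cond-Hom-ant-pre-Lie2}, in its equivalent form \eqref{cond-equiv1}. With it, the sum of terms $\alpha(a)\circ[b,c]_\circ$ vanishes, and pairing that sum against $\alpha^2$ of the remaining variable via invariance kills $2S$ directly. Characteristic $0$ then allows division by $2$.

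The main obstacle is the sign bookkeeping in the last step: arranging the six terms so that exactly the combination appearing in \eqref{cond-equiv1} (or \eqref{cond-Hom-ant-pre-Lie2}) emerges. I would first try the route through \eqref{cond-equiv1}, writing each term $\mathfrak B(\alpha(a),\alpha([b,c]))$ through invariance as $\mathfrak B(\alpha(a)\circ[b,c],\alpha^2(\cdot))$-type pairings and matching signs term by term.
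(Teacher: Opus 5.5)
Your handling of the converse is fine, and you are right that the direct part needs an invariance hypothesis. The gap is in closing the direct part. Steps 1--4 are correct, but route 1 only yields
\[
2S(x,y,z)=-S(\alpha(x),\alpha(y),\alpha(z)).
\]
For $\alpha=\mathrm{id}$ this gives $3S=0$, but in general it says nothing: on $\alpha$-eigenvectors with eigenvalues $\lambda,\mu,\nu$ it reads $(2+\lambda\mu\nu)S(x,y,z)=0$.

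Route 2 is not an argument, and in fact no argument can succeed. With invariance read as Eq. \eqref{inv-Hom-anti-pre-Lie}, as you read it, the direct part is false. Here is a counterexample.
\begin{enumerate}
\item Let $\mathcal A=V\oplus V^{*}$ be purely even, with $V=\mathrm{span}\{e_1,e_2,e_3\}$ and dual basis $e_1^*,e_2^*,e_3^*$.
\item Put $e_2\circ e_3=-e_3\circ e_2=e_1^*$, $e_3\circ e_1=-e_1\circ e_3=e_2^*$, $e_1\circ e_2=-e_2\circ e_1=-\tfrac12 e_3^*$, and set all other products to zero.
\item Let $\alpha$ act by $1,1,-2$ on $e_1,e_2,e_3$ and by $-2,-2,1$ on $e_1^*,e_2^*,e_3^*$.
\item Take $\mathfrak B=\mathfrak B_d$, the natural pairing.
\end{enumerate}
Then $\alpha$ is multiplicative. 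Eqs. \eqref{cond-Hom-ant-pre-Lie1}--\eqref{cond-Hom-ant-pre-Lie2} hold vacuously, since every iterated product vanishes. Both sides of \eqref{inv-Hom-anti-pre-Lie} equal $-2\varepsilon_{ijk}$ on $(e_i,e_j,e_k)$ and vanish on all triples involving dual vectors. Yet $[e_2,e_3]=2e_1^*$, $[e_3,e_1]=2e_2^*$ and $[e_1,e_2]=-e_3^*$ give $S(e_1,e_2,e_3)=2+2+2=6\neq 0$. In particular the Hom-anti-pre-Lie identities carry no information here, so \eqref{cond-equiv1} cannot be what kills $S$.

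What is needed, and what the paper's proof actually uses, is invariance in the form \eqref{mult-inv-Hom-anti-pre-Lie}:
\[
\mathfrak B(x\circ y,\alpha(z))=(-1)^{|x||y|}\mathfrak B(\alpha(y),[x,z]).
\]
For multiplicative $\alpha$, Eq. \eqref{inv-Hom-anti-pre-Lie} turns into this form once \eqref{cond-mult-bil-form} is also assumed. That condition fails in the example above, since $\mathfrak B(\alpha(e_1),\alpha(e_1^*))=-2$. With this form the proof is one line. Antisymmetrizing in $x,y$ gives
\[
\mathfrak B([x,y],\alpha(z))=(-1)^{|x||y|}\mathfrak B(\alpha(y),[x,z])-\mathfrak B(\alpha(x),[y,z]).
\]
Now rewrite the left side as $(-1)^{|z|(|x|+|y|)}\mathfrak B(\alpha(z),[x,y])$ by supersymmetry. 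Rewrite the first term on the right using $[x,z]=-(-1)^{|x||z|}[z,x]$. The result is exactly Eq. \eqref{sup-comm-2-cocyc}.

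No multiplicativity, no Hom-anti-pre-Lie identity and no division by $2$ is needed. Your detour through terms $\mathfrak B(\alpha(a)\circ\alpha(b),\alpha^2(c))$ is precisely what produces the uncontrollable term $S(\alpha(x),\alpha(y),\alpha(z))$.
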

\begin{proof}
Let $x,y,z\in\mathcal H(\mathcal A)$, then by Eq. \eqref{mult-inv-Hom-anti-pre-Lie}, we have
$$\mathfrak B([x,y],\alpha(z))=\mathfrak B(x\circ y,\alpha(z))-(-1)^{|x||y|}\mathfrak B(y\circ x,\alpha(z))=(-1)^{|x||y|}\mathfrak B(\alpha(y),[x,z])-\mathfrak B(\alpha(x),[y,z]).$$
Therefore $\mathfrak B$ is a super-commutative $2$-cocycle on the sub-adjacent Lie algebra $(\mathfrak g(\mathcal A),[\cdot,\cdot],\alpha)$. The second
half part follows from Corollary \ref{ant-h-pr-Lie-from-mult-nondeg-bil}.
\end{proof}

Recall that two representations $(V_1,\rho_1,\beta_1)$ and $(V_2,\rho_2,\beta_2)$ of
a Lie superalgebra $(\mathcal A,[\cdot,\cdot],\alpha)$ are called equivalent,
if there exists an even linear isomorphism

$\Psi:V_1\longrightarrow V_2$, such that
\begin{align*}
\Psi\circ\beta_1&=\beta_2\circ\Psi,\\
\Psi\bigl(\rho_1(x)v\bigr)&=
\rho_2(x)\Psi(v),
\qquad \forall\,x\in\mathcal H(\mathcal A),\; v\in\mathcal H(\mathcal V_1).
\end{align*}
\begin{prop}\label{Prop:nondeg-bilin-iif-sub-adj}
There exists an even nondegenerate invariant bilinear form on a multiplicative
Hom-anti-pre-Lie superalgebra $(\mathcal A,\circ,\alpha)$ if and only if its sub-adjacent
Hom-Lie superalgebra has two representations
$(\mathcal A,-\mathfrak L_{\circ},\alpha)$ and $(\mathcal A^{*},\mathfrak{ad}^{\star},(\alpha^{-1})^*)$,
and they are equivalent.
\end{prop}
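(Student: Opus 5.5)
The plan is to realise both directions through the linear map $T:\mathcal A\to\mathcal A^\ast$ attached to a bilinear form, $\langle T(x),y\rangle=\mathfrak B(x,y)$, exactly as in the proof of Theorem \ref{ant-h-pr-Lie-from-nondeg-bil}. The candidate equivalence between $(\mathcal A,-\mathfrak L_\circ,\alpha)$ and $(\mathcal A^\ast,\mathfrak{ad}^\star,(\alpha^{-1})^\ast)$ will be $\Psi=T$, possibly composed with a power of $(\alpha^{-1})^\ast$ to absorb the $\alpha^2$ shifts built into $\mathfrak{ad}^\star$ by \eqref{dual-adjoint-rep}. Invariance of $\mathfrak B$ in the sense of \eqref{inv-Hom-anti-pre-Lie} should translate exactly into the intertwining identity
\[
T\bigl(-\mathfrak L_\circ(x)y\bigr)=\mathfrak{ad}^\star(x)T(y),
\]
and nondegeneracy is exactly invertibility of $T$.

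Before either direction, I will record that the two representations always exist. $(\mathcal A,-\mathfrak L_\circ,\alpha)$ is a representation of the sub-adjacent Hom-Lie superalgebra by item (3) of the characterization proposition of Hom-anti-pre-Lie superalgebras. $(\mathcal A^\ast,\mathfrak{ad}^\star,(\alpha^{-1})^\ast)$ is the coadjoint representation; this uses regularity of $\alpha$, which is implicit since $\mathfrak{ad}^\star$ involves $\alpha^{-1}$. So the real content of the statement is the equivalence.

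\textbf{($\Rightarrow$)} Given $\mathfrak B$ even, nondegenerate and invariant, define $T$ as above. Evenness of $\mathfrak B$ makes $T$ even, and nondegeneracy makes it bijective. I will check compatibility with the twists, $T\circ\alpha=(\alpha^{-1})^\ast\circ T$; this means $\mathfrak B(\alpha(x),\alpha(y))=\mathfrak B(x,y)$, i.e. \eqref{cond-mult-bil-form}. I expect either to read this as part of the standing convention, or to replace $T$ by $T\circ\alpha^{k}$ for a suitable $k$ so that the twist condition holds automatically.

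Next I will compute $\langle \mathfrak{ad}^\star(\alpha(x))T(\alpha(y)),\alpha^2(z)\rangle$ using \eqref{dual-adjoint-rep}. This gives
\[
-\langle T(\alpha(y)),[x,z]\rangle=-\mathfrak B(\alpha(y),[x,z]).
\]
By \eqref{inv-Hom-anti-pre-Lie} this equals $-(-1)^{|x||y|}\mathfrak B(\alpha(x)\circ\alpha(y),\alpha^2(z))$, which is $\langle T(-\mathfrak L_\circ(\alpha(x))\alpha(y)),\alpha^2(z)\rangle$ up to the graded sign. Surjectivity of $\alpha$ then gives the intertwining identity for all arguments.

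\textbf{($\Leftarrow$)} Given an equivalence $\Psi:\mathcal A\to\mathcal A^\ast$, set $\mathfrak B(x,y)=\langle\Psi(x),y\rangle$. Evenness and nondegeneracy follow because $\Psi$ is an even isomorphism. Running the computation above backwards turns the intertwining identity into \eqref{inv-Hom-anti-pre-Lie}.

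The main obstacle is the sign bookkeeping and the placement of the $\alpha$ powers. I must make sure that the sign $(-1)^{|x||y|}$ in \eqref{inv-Hom-anti-pre-Lie} matches the super-transposition implicit in $\langle\Psi(x),y\rangle$ versus $\mathfrak B(y,x)$. I must also check that the twist condition $\Psi\circ\alpha=(\alpha^{-1})^\ast\circ\Psi$ is consistent with multiplicativity rather than forcing an extra hypothesis. If a mismatch appears, I would first try redefining $\mathfrak B(x,y)=\langle\Psi(\alpha^{j}(x)),y\rangle$ and choose $j$ so that both conditions line up.
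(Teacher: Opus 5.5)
\textbf{There is a genuine gap in the forward direction; moreover, the statement is false as written.} Your route is the same as the paper's: attach $\Psi=T$ to $\mathfrak B$ via $\langle T(x),y\rangle=\mathfrak B(x,y)$, and translate invariance into intertwining.

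The intertwining part of your argument is correct. Pair $\mathfrak{ad}^\star(\alpha(x))T(\alpha(y))$ with $\alpha^2(z)$, apply \eqref{inv-Hom-anti-pre-Lie}, and use bijectivity of $\alpha$. This gives $T(-\mathfrak L_\circ(x)y)=\mathfrak{ad}^\star(x)T(y)$ for all $x,y$. The graded signs match exactly once the coadjoint action carries the Koszul sign $(-1)^{|x||\xi|}$. That sign is dropped in \eqref{dual-adjoint-rep}, but the paper's own proof tacitly reinserts it.

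The gap is the twist condition $T\circ\alpha=(\alpha^{-1})^\ast\circ T$. As you observed, this is equivalent to $\mathfrak B(\alpha(x),\alpha(y))=\mathfrak B(x,y)$. Your fallback cannot supply it. For $\Psi'=((\alpha^{-1})^\ast)^{j}\circ T\circ\alpha^{k}$, the condition $\Psi'\circ\alpha=(\alpha^{-1})^\ast\circ\Psi'$ holds if and only if $T\circ\alpha=(\alpha^{-1})^\ast\circ T$, because the powers cancel. Likewise, redefining $\mathfrak B(x,y)=\langle\Psi(\alpha^{j}(x)),y\rangle$ never changes whether the form is $\alpha$-invariant.

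No cleverer choice of $\Psi$ helps either. Take $\mathcal A=\mathbb K e$ even, $\circ=0$, $\alpha(e)=2e$. This is a regular multiplicative Hom-anti-pre-Lie superalgebra. The form $\mathfrak B(e,e)=1$ is even, nondegenerate, and satisfies \eqref{inv-Hom-anti-pre-Lie} trivially. But an equivalence between $(\mathcal A,0,\alpha)$ and $(\mathcal A^\ast,0,(\alpha^{-1})^\ast)$ would require $2\Psi(e)=\tfrac12\Psi(e)$, which forces $\Psi=0$.

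So you must take your first option, stated as an explicit extra hypothesis rather than a convention you hope is implicit. The correct statement is: there exists an even nondegenerate invariant form satisfying \eqref{cond-mult-bil-form} if and only if the two representations are equivalent. In the converse direction, $\Psi\circ\alpha=(\alpha^{-1})^\ast\circ\Psi$ yields \eqref{cond-mult-bil-form} for free, since $\langle\Psi(\alpha(x)),\alpha(y)\rangle=\langle\Psi(x),y\rangle$. Your backwards computation then yields invariance.

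With this amendment your proof is complete, and it is tidier than the paper's. The paper never checks the twist condition. It also uses the relation $\Psi(-\mathfrak L_\circ(x)y)=\mathfrak{ad}^\star(\alpha(x))\Psi(\alpha(y))$, which is not the equivalence condition recalled just before the proposition. As a result, its computation ends with $\mathfrak B(x\circ y,\alpha^2(z))$ on the left-hand side rather than $\mathfrak B(\alpha(x)\circ\alpha(y),\alpha^2(z))$.
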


\begin{proof}
Suppose that there is an even linear isomorphism
$\Psi:\mathcal A\rightarrow\mathcal A^{*}$, it satisfies
\[
\Psi(-\mathfrak L_{\circ}(x)y)
=\mathfrak{ad}^{\star}(\alpha(x))\Psi(\alpha(y)),
\qquad \forall\,x,y\in\mathcal H(\mathcal A).
\]
Define an even nondegenerate bilinear form $\mathfrak B$, such that
\begin{equation}\label{cond-isom-via-inv-bil-form}
\mathfrak B(x,y)=\langle \Psi(x),y\rangle,
\qquad \forall\,x,y\in\mathcal H(\mathcal A).
\end{equation}
Then we have
\begin{align*}
\mathfrak B(x\circ y,\alpha^2(z))
&=\langle \Psi(x\circ y),\alpha^2(z)\rangle \\
&=\langle \Psi(\mathfrak L_{\circ}(x)y),\alpha^2(z)\rangle \\
&=-\langle \mathfrak{ad}^{\star}(\alpha(x))\Psi(\alpha(y)),\alpha^2(z)\rangle \\
&\overset{(\ref{dual-adjoint-rep})}{=}(-1)^{|x||y|}
   \langle \Psi(\alpha(y)),[x,z]\rangle \\
&=(-1)^{|x||y|}\mathfrak B(\alpha(y),[x,z]).
\end{align*}
Thus, $\mathfrak B$ is invariant on $(\mathcal A,\circ,\alpha)$.

Conversely, suppose that $\mathfrak B$ is an even nondegenerate invariant
bilinear form on $(\mathcal A,\circ,\alpha)$.  Define a linear map
$\Psi:\mathcal A\rightarrow\mathcal A^{*}$ satisfying Eq. \eqref{cond-isom-via-inv-bil-form}, then it can be
inferred through similar proof as above that $\Psi$ is a linear
isomorphism, such that $(\mathcal A,-\mathfrak L_{\circ},\alpha)$ and
$(\mathcal A^{*}),\mathfrak{ad}^{\star},(\alpha^{-1})^*,$ are equivalent.
\end{proof}
\begin{cor}
Let $(\mathcal A,[\cdot,\cdot],\alpha)$ be a multiplicative Hom-Lie superalgebra. If there is a
nondegenerate super-commutative $2$-cocycle on
$(\mathcal A,[\cdot,\cdot],\alpha)$, then there exists a compatible
Hom-anti-pre-Lie superalgebra $\mathcal A,\circ,\alpha)$, which is given by
Eq. \eqref{inv-Hom-anti-pre-Lie}. Furthermore, $(\mathcal A,-\mathfrak L_{\circ},\alpha)$ and
$(\mathcal A^{*},\mathfrak{ad}^{\star},(\alpha^{-1})^*)$ are equivalent as representations of
$(\mathcal A,[\cdot,\cdot],\alpha)$.

Conversely, if there is a compatible Hom-anti-pre-Lie superalgebra
$(\mathcal A,-\mathfrak L_{\circ},\alpha)$ such that $(\mathcal A,-\mathfrak L_{\circ},\alpha)$ and
$(\mathcal A^{*},\mathfrak{ad}^{\star},(\alpha^{-1})^*)$ are equivalent as representations of
$(\mathcal A,[\cdot,\cdot],\alpha)$, then there exists an even
nondegenerate bilinear form $\mathfrak B$ on $(\mathcal A,[\cdot,\cdot],\alpha)$, and
it satisfies
\begin{equation}
\mathfrak B([x,y],\alpha(z))+\mathfrak B(\alpha(x),[y,z])
-(-1)^{|x||y|}\mathfrak B(\alpha(y),[x,z])=0.
\end{equation}
\end{cor}
\begin{proof}
It is straightforward to obtain these results by
Corollary \ref{cor:sup-sym-bil-foris-2-coc} and Proposition \ref{Prop:nondeg-bilin-iif-sub-adj}.
\end{proof}

\section{Hom-Poisson superalgebras with nondegenerate bilinear forms and compatible Hom-anti-pre-Poisson superalgebras}\label{Sec4}
In \cite{wi-oth}, the authors introduced the notion of noncommutative Hom-anti-pre-Poisson superalgebras and established a structural connection between noncommutative Hom-Poisson superalgebras and noncommutative Hom-anti-pre-Poisson superalgebras via the theory of strong anti-super-$\mathcal O$-operators. This construction extends the classical correspondence between Poisson-type algebras and their compatible anti-pre-algebra counterparts to the Hom-super setting and provides an effective approach for producing new examples of noncommutative Hom-anti-pre-Poisson superalgebras.

Motivated by these developments, we focus here on the commutative framework and investigate the role played by bilinear forms in the construction of compatible Hom-anti-pre-Poisson superalgebra structures. More precisely, we introduce the notion of a Hom-anti-pre-Poisson superalgebra and study its relationship with Hom-Poisson superalgebras through suitable cocycle conditions. In particular, we show that a nondegenerate super-symmetric bilinear form $\mathfrak B$ on a Hom-Poisson superalgebra $(\mathcal A,[\cdot,\cdot],\mu,\alpha)$ such that $\mathfrak B$ is a super-commutative $2$-cocycles on the Hom-Lie superalgebra $(\mathcal A,[\cdot,\cdot],\alpha)$ and a super-commutative Connes cocycles on the supercommutative Hom-associative superalgebra $(\mathcal A,\mu,\alpha)$ naturally induce compatible Hom-anti-pre-Poisson superalgebra structures. These results provide a unified framework for constructing Hom-anti-pre-Poisson superalgebras from Hom-Poisson superalgebras endowed with appropriate bilinear forms.

\begin{df}(\cite{O-N})
A \textbf{Hom-Poisson superalgebra} is a quadruplet $(\mathcal{A}, [\cdot, \cdot], \mu, \alpha)$ consisting of:
\begin{enumerate}
\item A Hom-Lie superalgebra $(\mathcal{A}, [\cdot, \cdot],\alpha)$,
\item A super-commutative Hom-associative superalgebra $(\mathcal{A},\mu, \alpha)$,
\item The Hom-Leibniz superalgebra identity
\begin{equation}\label{Homleibnizident}
[\alpha(x),\mu(y,z)]=\mu([x,y],\alpha(z))+(-1)^{|x||y|}\mu(\alpha(y),[x,z])
\end{equation}
is satisfied for all $x,y,z\in\mathcal{H}(\mathcal{A})$.
\end{enumerate} 
We say that a Hom-Poisson superalgebra
$(\mathcal{A},[\cdot,\cdot],\mu,\alpha)$ is \emph{multiplicative} if the
Hom-Lie superalgebra $(\mathcal{A},[\cdot,\cdot],\alpha)$ and the
Hom-associative superalgebra $(\mathcal{A},\mu,\alpha)$ are both
multiplicative.
\end{df}
\begin{exa}\label{exa-H-Poisson-sup}
Let $\mathcal A=\mathcal A_{\bar0}\oplus\mathcal A_{\bar1}$ be a $\mathbb Z_2$-graded vector space
where $\mathcal A_{\bar0}=\operatorname{span}\{e_1,e_2\}$ and $\mathcal A_{\bar1}=\operatorname{span}\{f\}$.

Define the even linear map $\alpha:\mathcal A\rightarrow\mathcal A$ by
\[
\alpha(e_1)=e_1,\qquad
\alpha(e_2)=2e_2,\qquad
\alpha(f)=2f.
\]
Define the bilinear multiplication $\mu:\mathcal A\times\mathcal A\to\mathcal A$ by
\[
\mu(e_1,e_1)=e_1,\qquad
\mu(e_1,e_2)=\mu(e_2,e_1)=2e_2,\qquad
\mu(e_1,f)=\mu(f,e_1)=2f,
\]
and all the remaining products are zero.

Define the bracket $[\cdot,\cdot]:\mathcal A\times\mathcal A\to\mathcal A$ by
\[
[e_1,e_2]=-[e_2,e_1]=2e_2,\qquad
[e_1,f]=-[f,e_1]=2f,
\]
and all the remaining brackets are zero.

Then $(\mathcal A,[\cdot,\cdot],\mu,\alpha)$ 
is a $3$-dimensional multiplicative Hom-Poisson superalgebra.    
\end{exa}
  
\begin{df}\label{repr-com-Hom-Poiss}
 A representation of a  Hom-Poisson superalgebra $(\mathcal A,[\cdot,\cdot],\mu,\alpha)$ is a quadruple $(V,\rho,\eta,\beta)$ such that $(V,\rho,\beta)$ is a representation of the Hom-Lie superalgebra $(\mathcal A,[\cdot,\cdot],\alpha)$ and $(V,\eta,\beta)$ is a representation of the super-commutative Hom-associative superalgebra $(\mathcal A,\mu,\alpha)$ satisfying for any $x,y\in\mathcal{H}(\mathcal A)$ the following equalities:
\begin{align}  
\eta([x,y])\beta&=\rho(\alpha(x))\eta(y)-(-1)^{|x||y|}\eta(\alpha(y))\rho(x),\label{cond-rep-com-Hom-Pois1}\\
\rho(\mu(x,y))\beta&=\eta(\alpha(x))\rho(y)+(-1)^{|x||y|}\eta(\alpha(y))\rho(x).\label{cond-rep-noncom-Hom-Pois2}
\end{align}
\end{df}
\begin{exa}
 Let $(\mathcal A,[\cdot,\cdot],\mu,\alpha)$ be a Hom-Poisson superalgebra. Then $(\mathcal A,\mathfrak{ad},\mathfrak L,\alpha)$ is a representation of $\mathcal A$, called the \textbf{adjoint representation} of $\mathcal A$.   
\end{exa}

Let $(\mathcal A,[\cdot,\cdot],\mu,\alpha)$ be a Hom-Poisson superalgebra. Then $(V,\rho,\eta,\beta)$ is a representation
of $\mathcal A$ if and only if the direct
sum $\mathcal A\oplus V$ of super-vector spaces is a (semi-direct product)
Hom-Poisson superalgebra by defining the multiplications on $\mathcal A\oplus V$
by Eqs. \eqref{Hom-ass-direct-sum1} and \eqref{Hom-Lie-direct-sum} respectively. We denote it by
$\mathcal A\ltimes_{\rho,\eta}V$.
\begin{prop}\label{Prop:dual-dep-H-Poiss-sup}
Let $(\mathcal A,[\cdot,\cdot],\mu,\alpha)$ be a Hom-Poisson superalgebra. If
$(V,\rho,\eta,\beta)$ is a representation of
$(\mathcal A,[\cdot,\cdot],\mu,\alpha)$, then
$(V^{*},\rho^{\star},-\eta^{\star},(\beta^{-1})^*)$
is also a representation of
$(\mathcal A,[\cdot,\cdot],\mu,\alpha)$.    
\end{prop}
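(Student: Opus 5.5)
We need to prove that $(V^*,\rho^\star,-\eta^\star,(\beta^{-1})^*)$ is a representation of the Hom-Poisson superalgebra. Throughout we work, as in the definitions of $\rho^\star$ and $\eta^\star$, with a multiplicative Hom-Poisson superalgebra with $\alpha$ and $\beta$ invertible.

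The plan has two parts: the individual structures, then the two compatibility conditions.

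\textbf{Individual structures.} By the dual representation result recalled after Eq. \eqref{dual-representation}, $(V^*,\rho^\star,(\beta^{-1})^*)$ is a representation of $(\mathcal A,[\cdot,\cdot],\alpha)$. By Theorem \ref{dual-rep-H-ass-sup}, $(V^*,-\eta^\star,(\beta^{-1})^*)$ is a representation of $(\mathcal A,\mu,\alpha)$.

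\textbf{Compatibility conditions.} It remains to check \eqref{cond-rep-com-Hom-Pois1} and \eqref{cond-rep-noncom-Hom-Pois2} for the triple $(\rho^\star,-\eta^\star,(\beta^{-1})^*)$. Pair each side with an arbitrary homogeneous $v\in V$ and move every operator across the pairing using
\[
\langle\rho^\star(x)\xi,v\rangle=-\langle\xi,\rho(\alpha^{-1}(x))\beta^{-2}(v)\rangle ,
\]
and its analogue for $\eta^\star$, recording the Koszul signs each time. One also needs the standard commutation relations, which follow from multiplicativity together with \eqref{repres-Hom-Lie1} and \eqref{rAs1}:
\[
\rho(\alpha^{-1}(x))\beta^{-1}=\beta^{-1}\rho(\alpha^{-2}(x)),\qquad \eta(\alpha^{-1}(x))\beta^{-1}=\beta^{-1}\eta(\alpha^{-2}(x)).
\]
With these, every $\beta^{-k}$ can be collected on one side.

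\emph{First condition.} For \eqref{cond-rep-com-Hom-Pois1}, the left side $-\eta^\star([x,y])(\beta^{-1})^*\xi$ paired with $v$ becomes, up to sign, $\langle\xi,\eta([\alpha^{-1}x,\alpha^{-1}y])\beta^{-3}v\rangle$. Set $x'=\alpha^{-2}x$ and $y'=\alpha^{-2}y$, and use multiplicativity in the form $[\alpha^{-1}x,\alpha^{-1}y]=\alpha([x',y'])$. Then apply \eqref{cond-rep-com-Hom-Pois1} for the original representation in the form
\[
\eta(\alpha[x',y'])\beta=\rho(\alpha^2x')\eta(\alpha y')-(-1)^{|x||y|}\eta(\alpha^2y')\rho(\alpha x').
\]
The right side, $\rho^\star(\alpha x)(-\eta^\star(y))-(-1)^{|x||y|}(-\eta^\star(\alpha y))\rho^\star(x)$, dualizes to products of transposes in reversed order. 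These reorder to $\eta(\cdot)\rho(\cdot)$ and $\rho(\cdot)\eta(\cdot)$ terms with the same twisted arguments.

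The point to watch is that the reversal of order under dualization exchanges the roles of the two terms. So the transpose of \eqref{cond-rep-com-Hom-Pois1} is the identity with $x$ and $y$ swapped and a sign $-(-1)^{|x||y|}$. Combined with super-skew-symmetry of the bracket, this matches exactly.

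\emph{Second condition.} For \eqref{cond-rep-noncom-Hom-Pois2}, the same transposition turns $\rho(\mu(x,y))\beta$ into a sum whose terms are of the form $\rho\,\eta$ rather than $\eta\,\rho$. This step is the main obstacle, since \eqref{cond-rep-noncom-Hom-Pois2} is stated with $\eta$ on the left. To handle it, I first derive from \eqref{cond-rep-com-Hom-Pois1} the commutator identity
\[
\rho(\alpha x)\eta(y)-(-1)^{|x||y|}\eta(\alpha y)\rho(x)=\eta([x,y])\beta.
\]
I use it to rewrite each $\rho\eta$ term as an $\eta\rho$ term plus an $\eta([\cdot,\cdot])\beta$ correction. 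The two corrections carry $[x,y]$ and $[y,x]$ with matching Koszul signs, so by super-skew-symmetry they cancel. Super-commutativity of $\mu$ then lets the remaining terms be identified with \eqref{cond-rep-noncom-Hom-Pois2}. The overall minus sign coming from $-\eta^\star$ is what makes the signs agree: the identity is linear in $\eta$ on both sides, since each term contains one $\eta$, while the $\rho^\star$ carries its own minus. I would track this carefully and verify it against the untwisted case $\alpha=\beta=\mathrm{id}$ before finalizing.
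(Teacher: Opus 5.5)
The paper gives no proof of this proposition; it is followed directly by ``Hence we get the following conclusion''. Your argument therefore has to stand on its own.

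\textbf{What is right.} The outline is sound. The two dual-representation results cover the Hom-Lie and Hom-associative parts. You are right to assume multiplicativity and invertibility of $\alpha,\beta$, since otherwise $\rho^\star,\eta^\star$ are not defined. Your key idea for \eqref{cond-rep-noncom-Hom-Pois2} is exactly what is needed: use \eqref{cond-rep-com-Hom-Pois1} to turn the $\rho\eta$ terms into $\eta\rho$ terms, so that the corrections $\eta([a,b])\beta+(-1)^{|a||b|}\eta([b,a])\beta$ cancel.

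\textbf{The error.} The commutation relation you display is reversed, and used as written it breaks the argument. From \eqref{repres-Hom-Lie1} one gets $\beta^{-1}\rho(y)=\rho(\alpha^{-1}(y))\beta^{-1}$, i.e.\ $\beta^{-1}\rho(\alpha^{-1}(x))=\rho(\alpha^{-2}(x))\beta^{-1}$, and likewise for $\eta$ by \eqref{rAs1}. Your relation $\rho(\alpha^{-1}(x))\beta^{-1}=\beta^{-1}\rho(\alpha^{-2}(x))$ is equivalent to $\beta\rho(\alpha(y))=\rho(y)\beta$, which is the opposite twist. It also contradicts the two expressions in \eqref{dual-representation}: it gives $\beta^{-2}\rho(\alpha(x))=\rho(\alpha^{3}(x))\beta^{-2}$ instead of $\rho(\alpha^{-1}(x))\beta^{-2}$. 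Pushing the $\beta^{-k}$ through with it destroys the pattern $\rho(\alpha(a))\eta(b)$, $\eta(\alpha(b))\rho(a)$ that \eqref{cond-rep-com-Hom-Pois1} and \eqref{cond-rep-noncom-Hom-Pois2} require. Relatedly, your left side for the first condition should be $\eta([\alpha^{-2}(x),\alpha^{-2}(y)])\beta^{-3}$, not $\eta([\alpha^{-1}(x),\alpha^{-1}(y)])\beta^{-3}$.

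\textbf{The corrected computation.} Use the graded transpose $\langle A^{t}\xi,v\rangle=(-1)^{|A||\xi|}\langle\xi,Av\rangle$, so that $(AB)^{t}=(-1)^{|A||B|}B^{t}A^{t}$. This is not optional: with the sign-free pairing of \eqref{n-dual-rep-H-Lie-sup}, the dual already fails in general to be a Hom-Lie representation on pairs of odd elements. Then $\rho^\star(x)=-(\beta^{-1}\rho(x)\beta^{-1})^{t}$ and $-\eta^\star(x)=(\beta^{-1}\eta(x)\beta^{-1})^{t}$. With $x'=\alpha^{-2}(x)$ and $y'=\alpha^{-2}(y)$ one finds
\begin{align*}
(-\eta^\star)([x,y])\,(\beta^{-1})^{*}&=\big(\eta([x',y'])\beta^{-3}\big)^{t},\\
\rho^\star(\mu(x,y))\,(\beta^{-1})^{*}&=-\big(\rho(\mu(x',y'))\beta^{-3}\big)^{t},\\
\rho^\star(\alpha(x))\,(-\eta^\star(y))&=-(-1)^{|x||y|}\big(\eta(\alpha(y'))\rho(x')\beta^{-4}\big)^{t},\\
(-\eta^\star(\alpha(y)))\,\rho^\star(x)&=-(-1)^{|x||y|}\big(\rho(\alpha(x'))\eta(y')\beta^{-4}\big)^{t}.
\end{align*}

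The first dual condition is then literally \eqref{cond-rep-com-Hom-Pois1} at $(x',y')$, right-multiplied by $\beta^{-4}$ and transposed. No swap of $x$ and $y$ and no skew-symmetry is needed: $x$ stays inside $\rho$ and $y$ inside $\eta$.

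The second dual condition reduces to
\[
\rho(\mu(x',y'))\beta=\rho(\alpha(x'))\eta(y')+(-1)^{|x||y|}\rho(\alpha(y'))\eta(x').
\]
Your rewriting via \eqref{cond-rep-com-Hom-Pois1} turns this into \eqref{cond-rep-noncom-Hom-Pois2} at $(x',y')$. Only multiplicativity is used, namely $\alpha^{-2}[x,y]=[x',y']$ and $\alpha^{-2}\mu(x,y)=\mu(x',y')$; super-commutativity of $\mu$ is not needed.

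\textbf{On signs.} Your heuristic is slightly off. The second condition is not homogeneous in $\eta$, since its left side contains no $\eta^\star$. The signs agree because each side carries exactly one $\rho^\star=-(\cdots)^{t}$, while $-\eta^\star$ transposes with no sign. That is why the sign in $-\eta^\star$ genuinely matters there.
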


Hence we get the following conclusion.
\begin{cor}
Let $(\mathcal A,[\cdot,\cdot],\mu,\alpha)$ be a Hom-Poisson superalgebra. Then
$(\mathcal A^{*},\mathfrak{ad}^{\star},-\mathfrak L_{\mu}^{\star},(\alpha^{-1})^*)$ is a representation of
$(\mathcal A,[\cdot,\cdot],\mu,\alpha)$, and the natural nondegenerate super-symmetric bilinear
form $\mathfrak B_{d}$ defined by Eq. \eqref{natur-biln-form-H-assoc-sup} on the resulting Hom-Poisson superalgebra
$\mathcal A\ltimes_{\mathfrak{ad}^{\star},\,-\mathfrak L_{\mu}^{\star}}\mathcal A^{*}$ is invariant on
both the super-commutative Hom-associative superalgebra
$\mathcal A\ltimes_{-\mathfrak L_{\mu}^{\star}}\mathcal A^{*}$ and the Hom-Lie superalgebra
$\mathcal A\ltimes_{\mathfrak{ad}^{\star}}\mathcal A^{*}$.    
\end{cor}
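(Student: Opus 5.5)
The statement has two parts. The first is that $(\mathcal A^{*},\mathfrak{ad}^{\star},-\mathfrak L_{\mu}^{\star},(\alpha^{-1})^*)$ is a representation of $(\mathcal A,[\cdot,\cdot],\mu,\alpha)$. The second is that $\mathfrak B_d$ is invariant on both components of the semi-direct product. The first part is immediate: the adjoint representation $(\mathcal A,\mathfrak{ad},\mathfrak L,\alpha)$ of the preceding example is a representation of $\mathcal A$, so Proposition \ref{Prop:dual-dep-H-Poiss-sup} applies with $V=\mathcal A$, $\rho=\mathfrak{ad}$, $\eta=\mathfrak L$, $\beta=\alpha$. Here we tacitly assume, as the paper does throughout when using $\mathfrak{ad}^\star$ and $\mathfrak L^\star$, that $\alpha$ is invertible and the structure is multiplicative. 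The semi-direct product $\mathcal A\ltimes_{\mathfrak{ad}^{\star},-\mathfrak L_{\mu}^{\star}}\mathcal A^{*}$ is then a Hom-Poisson superalgebra by the characterization of representations via semi-direct products stated just before Proposition \ref{Prop:dual-dep-H-Poiss-sup}.

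For invariance I would check Eqs. \eqref{inv-bil-form-H-ass-sup} and \eqref{inv-bil-form-H-Lie-sup} directly on the twisting map $\alpha\oplus(\alpha^{-1})^*$, using trilinearity to reduce to homogeneous elements $x+a^*$, $y+b^*$, $z+c^*$. Since $\mathfrak B_d$ pairs only $\mathcal A$ with $\mathcal A^*$ and the products of two dual elements vanish, each side splits into three terms. In each term exactly one of the three arguments lies in $\mathcal A^*$, and the other two lie in $\mathcal A$.

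Take the associative case. The term with the dual element in the third slot gives, on the left,
\[
\langle\mu(x,y),(\alpha^{-1})^*c^*\rangle ,
\]
and on the right,
\[
\langle \alpha(x), -\mathfrak L^\star(y)c^*\rangle .
\]
Unwinding \eqref{eq:1.3} together with \eqref{n-dual-rep-H-ass-sup}, the right-hand side becomes $\langle c^*,\mu(\alpha(y),\alpha^{-2}\alpha(x))\rangle$, up to the sign $(-1)^{|x||y|}$. Multiplicativity then turns both sides into $\langle c^*,\alpha^{-1}\mu(x,y)\rangle$, using super-commutativity of $\mu$. The term with the dual element in the first slot and the term with it in the middle slot are handled the same way, moving $\alpha$'s across the pairing by $\langle(\alpha^{-1})^*\xi,u\rangle=\langle\xi,\alpha^{-1}u\rangle$. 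The Lie case is identical, with $\mathfrak{ad}^\star$ given by \eqref{dual-adjoint-rep} and super-skew-symmetry in place of super-commutativity; the two extra minus signs there compensate.

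The expected obstacle is purely bookkeeping: tracking the Koszul signs $(-1)^{|u||v|}$ that appear when passing a dual element across the pairing. These include the sign convention $\langle a^*,y\rangle$ versus $\langle y,a^*\rangle$ hidden in $\mathfrak B_d$, which has to be read as $(-1)^{|a||y|}$ for $\mathfrak B_d$ to be supersymmetric. The other delicate point is making sure the powers of $\alpha$ coming from $\alpha^{-1}$ and $\alpha^{-2}$ in the twisted duals cancel, which is exactly where multiplicativity is used. I would fix the sign convention first, then do one term fully, and obtain the other terms by symmetry.
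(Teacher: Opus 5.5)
Your proposal is correct and follows the paper's route. The paper gets the corollary directly (``Hence'') from Proposition \ref{Prop:dual-dep-H-Poiss-sup} applied to the adjoint representation $(\mathcal A,\mathfrak{ad},\mathfrak L,\alpha)$. It takes the invariance of $\mathfrak B_d$ on $\mathcal A\ltimes_{-\mathfrak L_\mu^\star}\mathcal A^*$ and on $\mathcal A\ltimes_{\mathfrak{ad}^\star}\mathcal A^*$ from the two unproved remarks in Section \ref{Sec2}. You instead check that invariance term by term. Your reduction to the three terms with exactly one dual argument is right, as are your tacit assumptions of regularity and multiplicativity.

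One slip in that check. Since $(\alpha^{-2})^*$ acts on $c^*$, unwinding \eqref{eq:1.3} gives, up to Koszul signs,
\[
\langle\mathfrak L^\star(y)c^*,\alpha(x)\rangle=-\langle c^*,\alpha^{-2}\mu(\alpha(y),\alpha(x))\rangle=-\langle c^*,\mu(\alpha^{-1}(y),\alpha^{-1}(x))\rangle .
\]
This is the exact analogue of \eqref{dual-adjoint-rep}. It is not $\langle c^*,\mu(\alpha(y),\alpha^{-2}\alpha(x))\rangle$ as you wrote. Only the corrected form reduces, via multiplicativity and super-commutativity, to $\pm\langle c^*,\alpha^{-1}\mu(x,y)\rangle$ and closes the argument.
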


\begin{df}
 Let $(V,\rho,\eta,\beta)$ be a representation of a Hom-Poisson superalgebra $(\mathcal A,[\cdot,\cdot],\mu,\alpha)$. An even linear map $T:V\to\mathcal A$ is called an \textbf{anti-super-$\mathcal O$-operator} on $(\mathcal A,[\cdot,\cdot],\mu,\alpha)$ if it satisfying both \eqref{cond-ant-O-oper-ncomm-Hom-ass2} and \eqref{cond-ant-O-oper-Hom-Lie2}.
 
 An anti-super-$\mathcal O$-operator $T$ on a Hom-Poisson superalgebra $(\mathcal A,[\cdot,\cdot],\mu,\alpha)$ is called \textbf{Strong} if:
 \begin{enumerate}
\item $T$ is strong on the Hom-Lie superalgebra $(\mathcal A,[\cdot,\cdot],\alpha)$.
\item $T$ is strong on the super-commutative Hom-associative superalgebra $(\mathcal A,\mu,\alpha)$.
\item $T$ satisfying for any homogeneous elements $u,v,w$ of $V$ the following condition:
\begin{equation}\label{cond-strong-O-op-Hom-Pois-sup}
   \eta([T(u),T(v)])\beta(w)+(-1)^{|v||w|}\eta([T(u),T(w)])\beta(v)+(-1)^{|u|(|v|+|w|)}\rho(\mu(T(v),T(w)))\beta(u)=0. 
\end{equation}
 \end{enumerate}
 In particular, an anti-super-$\mathcal O$-operator $\mathcal R$ of $(\mathcal A,[\cdot,\cdot],\mu,\alpha)$ associated with the adjoint representation $(\mathcal A,\mathfrak {ad},\mathfrak L,\alpha)$ is called an \textbf{anti-Rota-Baxter operator (of weight zero)}, that is, $\mathcal R:\mathcal A\to\mathcal A$ is an even linear map satisfying conditions \eqref{ant-RB-oper-noncomm-ass} and \eqref{cond-Rota-Baxter-Oper}.
An anti-Rota-Baxter operator $\mathcal R$ is called \textbf{strong}, if it satisfies \eqref{strong-ant-RB-oper-noncomm-ass}, \eqref{cond-strong-Rota-Baxter-Oper} and the following condition.
\begin{equation}\label{cond-strong-R-B-Op-Hom-Poiss}
[[\mathcal R(x),\mathcal R(y)],\alpha(z)]+(-1)^{|y||z|}[[\mathcal R(x),\mathcal R(z)],\alpha(y)]+(-1)^{|x|(|y|+|
z|)}[\mu(\mathcal R(y),\mathcal R(z)),\alpha(x)]=0,   
\end{equation}
for all $x,y,z\in\mathcal H(\mathcal A)$.
\end{df}
\begin{df}
A \textbf{Hom-anti-pre-Poisson superalgebra} is a quadruple $(\mathcal A,\circ,\star,\alpha)$ consisting of a Hom-anti-pre-Lie superalgebra $(\mathcal A,\circ,\alpha)$ and a Hom-anti-Zinbiel superalgebra $(\mathcal A,\star,\alpha)$ such that the following conditions hold:
\begin{align}
(x\circ y-(-1)^{|x||y|}y\circ x)\star \alpha(z)
&=(-1)^{|x||y|}\alpha(y)\star(x\circ z)-\alpha(x)\circ(y\star z), \label{cond-H-ant-pre-Poiss1}\\
(x\star y+(-1)^{|x||y|}y\star x)\circ\alpha(z)
&=-\alpha(x)\star(y\circ z)-(-1)^{|x||y|}\alpha(y)\star(x\circ z), \label{cond-H-ant-pre-Poiss2}\\
(-1)^{|y||z|}\alpha(x)\star(z\circ y)+(-1)^{|x|(|y|+|z|)}\alpha(y)\star(z\circ x)
&=(-1)^{|z|(|x|+|y|)}\alpha(z)\circ(x\star y+(-1)^{|x||y|}y\star x)\nonumber\\&\quad-(x\star y+(-1)^{|x||y|}y\star x)\circ\alpha(z), \label{cond-H-ant-pre-Poiss3}
\end{align}
for all $x,y,z\in\mathcal H(\mathcal A)$.
\end{df}
\begin{exa}
Let $\mathcal A=\mathcal A_{\bar0}\oplus\mathcal A_{\bar1}$ be a $\mathbb Z_2$-graded vector space
where $\mathcal A_{\bar0}=\operatorname{span}\{e_1,e_2\}$ and $\mathcal A_{\bar1}=\operatorname{span}\{f\}$.

Define the even linear map $\alpha:\mathcal A\rightarrow\mathcal A$ by
$$
\alpha(e_1)=0,\qquad
\alpha(e_2)=e_1,\qquad
\alpha(f)=f.
$$
Define two even bilinear operations $\circ,\star:\mathcal A\times\mathcal A\to\mathcal A$ by
$$e_2\circ e_2=e_1,\;\;f\star f=e_1$$
and set all other products equal to zero.

Then $(\mathcal A,\circ,\star,\alpha)$ 
is a $3$-dimensional Hom-anti-pre-Poisson superalgebra.     
\end{exa}
\begin{thm}
Let $T:V\rightarrow \mathcal{A}$ be an anti-super-$\mathcal{O}$-operator on a Hom-Poisson-superalgebra $(\mathcal A,[\cdot,\cdot],\mu,\alpha)$ 
with respect to a representation $(V,\rho,\eta,\beta)$. Define the following binary operations $\circ_T,\star_T:V\times V\rightarrow V$ defined by
\begin{align*}
u\circ_T v&=-\rho(T(u))v, \\
u\star_T v&=-\eta(T(u))v. 
\end{align*}
Then $(V,\circ_T,\star_T,\beta)$ is a Hom-anti-Pre-Poisson superalgebra if and only if $T$ is strong.
\end{thm}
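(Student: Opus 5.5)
The proof splits the Hom-anti-pre-Poisson axioms into three blocks and matches each block with one clause of the definition of a strong anti-super-$\mathcal O$-operator on a Hom-Poisson superalgebra. Throughout we use $T\circ\beta=\alpha\circ T$. We also use the two anti-$\mathcal O$-operator identities
\[
T(u\star_T v+(-1)^{|u||v|}v\star_T u)=\mu(T(u),T(v))
\]
and
\[
T([u,v]_{\circ_T})=[T(u),T(v)].
\]
Both are direct rewritings of Eqs. \eqref{cond-ant-O-oper-ncomm-Hom-ass2} and \eqref{cond-ant-O-oper-Hom-Lie2}.

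\textbf{First block.} The anti-pre-Lie part is Theorem \ref{Hom-ant-pre-Lie-by-ant-O-oper}. Since $(V,\rho,\beta)$ is a representation of the Hom-Lie superalgebra, $(V,\circ_T,\beta)$ is a Hom-anti-pre-Lie superalgebra if and only if $T$ is strong on $(\mathcal A,[\cdot,\cdot],\alpha)$. The anti-Zinbiel part is Theorem \ref{Hom-ant-Zinb-by-ant-O-oper}. Since $(V,\eta,\beta)$ is a representation of the Hom-associative part, $(V,\star_T,\beta)$ is a Hom-anti-Zinbiel superalgebra if and only if $T$ is strong on $(\mathcal A,\mu,\alpha)$. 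These give conditions (1) and (2) of strongness.

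\textbf{Second block: the mixed axioms \eqref{cond-H-ant-pre-Poiss1} and \eqref{cond-H-ant-pre-Poiss2}.} We show these hold automatically, using only the representation conditions of Definition \ref{repr-com-Hom-Poiss}.

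For \eqref{cond-H-ant-pre-Poiss1}, the left side equals $-\eta(T([u,v]_{\circ_T}))\beta(w)=-\eta([T(u),T(v)])\beta(w)$. Apply \eqref{cond-rep-com-Hom-Pois1} and replace $\alpha T$ by $T\beta$. This produces
\[
-\rho(T\beta(u))\eta(T(v))w+(-1)^{|u||v|}\eta(T\beta(v))\rho(T(u))w,
\]
which is exactly $(-1)^{|u||v|}\beta(v)\star_T(u\circ_T w)-\beta(u)\circ_T(v\star_T w)$. The two minus signs in the definitions of $\circ_T$ and $\star_T$ cancel in the quadratic terms, so the signs match.

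For \eqref{cond-H-ant-pre-Poiss2}, the left side equals $-\rho(\mu(T(u),T(v)))\beta(w)$. Apply \eqref{cond-rep-noncom-Hom-Pois2} in the same way to get $-\eta(\alpha T(u))\rho(T(v))w-(-1)^{|u||v|}\eta(\alpha T(v))\rho(T(u))w$, which is the right-hand side. The main care in this block is checking Koszul signs when $\eta(T(v))$ is moved past homogeneous elements; no strongness is needed.

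\textbf{Third block: axiom \eqref{cond-H-ant-pre-Poiss3}.} This should be equivalent to condition (3), Eq. \eqref{cond-strong-O-op-Hom-Pois-sup}, and I expect it to be the main obstacle. I would move everything in \eqref{cond-H-ant-pre-Poiss3} to one side.

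The term $(u\star_T v+(-1)^{|u||v|}v\star_T u)\circ_T\beta(w)$ equals $-\rho(\mu(T(u),T(v)))\beta(w)$ by the first identity. The term with $\alpha(z)\circ(\cdots)$ on the right is $-\rho(T\beta(w))$ applied to $T^{-1}$-free data; it equals $-(-1)^{|w|(|u|+|v|)}\rho(\alpha T(w))$ acting on $u\star_T v+(-1)^{|u||v|}v\star_T u$. To handle it, expand $\rho(\alpha T(w))\eta(T(u))v$ using \eqref{cond-rep-com-Hom-Pois1} in reverse: $\rho(\alpha(x))\eta(y)=\eta([x,y])\beta+(-1)^{|x||y|}\eta(\alpha(y))\rho(x)$. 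The terms $\eta(\alpha T(\cdot))\rho(T(w))(\cdot)$ produced this way should cancel against the left-hand terms $\beta(u)\star_T(w\circ_T v)$ and $\beta(v)\star_T(w\circ_T u)$.

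What remains is a combination of $\eta([T(w),T(u)])\beta(v)$, $\eta([T(w),T(v)])\beta(u)$ and $\rho(\mu(T(u),T(v)))\beta(w)$. After cyclically relabelling $(u,v,w)\mapsto(w,u,v)$ and using super-skew-symmetry of the bracket and super-commutativity of $\mu$, this should be, up to an overall sign, exactly the left side of \eqref{cond-strong-O-op-Hom-Pois-sup}. The risk lies in the sign bookkeeping of this relabelling and in confirming that the cancellation is exact. If a stray term survives, I would also try \eqref{cond-rep-noncom-Hom-Pois2} on the $\rho(\mu)$ term instead.

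Since each step is an equivalence of identities holding for all $u,v,w$, combining the three blocks gives the "if and only if".
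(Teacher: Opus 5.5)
Your proposal is correct and follows the paper's proof essentially step for step: the two component structures come from Theorems \ref{Hom-ant-pre-Lie-by-ant-O-oper} and \ref{Hom-ant-Zinb-by-ant-O-oper}, Eqs. \eqref{cond-H-ant-pre-Poiss1} and \eqref{cond-H-ant-pre-Poiss2} hold automatically by \eqref{cond-rep-com-Hom-Pois1} and \eqref{cond-rep-noncom-Hom-Pois2}, and \eqref{cond-H-ant-pre-Poiss3} is reduced via \eqref{cond-rep-com-Hom-Pois1} to clause (3) of strongness. The step you hedged on closes exactly as you predicted: the $\eta(\alpha T(\cdot))\rho(T(w))$ terms cancel with matching signs, and the right side minus the left side of \eqref{cond-H-ant-pre-Poiss3} equals $(-1)^{|w|(|u|+|v|)}$ times the left-hand side of \eqref{cond-strong-O-op-Hom-Pois-sup} evaluated at $(w,u,v)$, so you need neither skew-symmetry nor the fallback to \eqref{cond-rep-noncom-Hom-Pois2}.
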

\begin{proof}
According to Theorems \ref{Hom-ant-Zinb-by-ant-O-oper} and \ref{Hom-ant-pre-Lie-by-ant-O-oper}, we conclude that  $(V,\circ_T,\beta)$ is a Hom-anti-pre-Lie superalgebra and $(V,\star_T,\beta)$ is a Hom-anti-Zinbiel superalgebra. It remains to show that Eqs. \eqref{cond-H-ant-pre-Poiss1}-\eqref{cond-H-ant-pre-Poiss3} are satisfied.\\
Let $u,v,w\in\mathcal H(V)$, we have:
\begin{align*}
(u\circ_T v-(-1)^{|u||v|}v\circ_T u)\star_T \beta(w)
&=\eta\Big(T\big(\rho(T(u))v-(-1)^{|u||v|}\rho(T(v))u\big)\Big)\beta(w)\\&\overset{(\ref{cond-ant-O-oper-Hom-Lie2})}{=}-\eta([T(u,T(v)])\beta(w)\\&\overset{(\ref{cond-rep-com-Hom-Pois1})}{=}-\rho(\alpha(T(u)))\eta((v))w+(-1)^{|u||v|}\eta(\alpha(T(v)))\rho(T(u))w\\&=-\rho(T(\beta(u)))\eta((v))w+(-1)^{|u||v|}\eta(T(\beta(v)))\rho(T(u))w\\&=-\beta(u)\circ_T(v\star_T w)+(-1)^{|u||v|}\beta(v)\star_T(u\circ_T z).   
\end{align*}
Therefore Eq. \eqref{cond-H-ant-pre-Poiss1} is satisfied. Similarly, by applying Eqs. \eqref{cond-ant-O-oper-ncomm-Hom-ass2} and \eqref{cond-rep-noncom-Hom-Pois2}, one can verify that Eq. \eqref{cond-H-ant-pre-Poiss2} is also satisfied.\\
For any homogeneous elements $u,v,w\in V$, we have:
\begin{align*}
LHS&=(-1)^{|v||w|}\beta(u)\star_T(w\circ_T v)+(-1)^{|u|(|v|+|w|)}\beta(v)\star_T(w\circ_T u)\\&\quad-(-1)^{|w|(|u|+|v|)}\beta(w)\circ_T(u\star_T v+(-1)^{|u||v|}v\star_T u)+(u\star_T v+(-1)^{|u||v|}v\star_T u)\circ_T\beta(w)\\&= (-1)^{|v||w|}\eta(\alpha(T(u)))\rho(T(w))v+(-1)^{|u|(|v|+|w|)} \eta(\alpha(T(v)))\rho(T(w))u\\&-(-1)^{|w|(|u|+|v|)}\rho(\alpha(T(w)))\eta(T(u))v- (-1)^{|w|(|u|+|v|)+|u||v|}\rho(\alpha(T(w)))\eta(T(v))u\\&+\rho\Big(T\big(\eta(T(u))v-(-1)^{|u||v|}\eta(T(v))u\big)\Big)\beta(w)\\&=-\Big((-1)^{|w|(|u|+|v|)}\rho(\alpha(T(w)))\eta(T(u))-(-1)^{|v||w|}\eta(\alpha(T(u)))\rho(T(w))\Big)v\\&-\Big((-1)^{|w|(|u|+|v|)+|u||v|}\rho(\alpha(T(w)))\eta(T(v))-(-1)^{|u|(|v|+|w|)}\eta(\alpha(T(v)))\rho(T(w))\Big)u\\&+\rho(\eta(T(u),T(v)))\beta(w)\\&=-(-1)^{|w|(|u|+|v|)}\eta([T(w),T(u)])\beta(v)-(-1)^{|u|(|v|+|w|)+|v||w|}\eta([T(w),T(v)])\beta(u)-\rho(\eta(T(u),T(v)))\beta(w). 
\end{align*}
Then $LHS=0$ if and only if condition \eqref{cond-strong-O-op-Hom-Pois-sup} holds. Equivalently, $T$ is strong, which completes the proof.
\end{proof}
\begin{cor}\label{H-ant-pre-Poiss-via-ant-R-B}
Let $\mathcal R:\mathcal A\rightarrow \mathcal{A}$ be an anti-Rota-Baxter operator of weight zero on a Hom-Poisson-superalgebra $(\mathcal A,[\cdot,\cdot],\mu,\alpha)$. Define the following binary operations $\circ_\mathcal{R},\star_\mathcal{R}:\mathcal{A}\times\mathcal{A}\rightarrow \mathcal{A}$ defined for any $x,y\in\mathcal{H}(\mathcal A)$ by
\begin{align*}
x\circ_\mathcal{R} y&=-[\mathcal R(x),y] \\
x\star_{R} y&=-\mu(\mathcal R(x),y).
\end{align*}
Then $(\mathcal A,\circ_\mathcal{R},\star_\mathcal{R},\alpha)$ is a Hom-anti-pre-Poisson superalgebra if and only if $\mathcal R$ is strong.    
\end{cor}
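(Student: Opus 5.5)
The plan is to specialize the preceding theorem (the anti-super-$\mathcal O$-operator characterization of Hom-anti-pre-Poisson superalgebras) to the adjoint representation $(\mathcal A,\mathfrak{ad},\mathfrak L,\alpha)$ of the Hom-Poisson superalgebra $(\mathcal A,[\cdot,\cdot],\mu,\alpha)$, taking $V=\mathcal A$, $\beta=\alpha$, $\rho=\mathfrak{ad}$, $\eta=\mathfrak L$ and $T=\mathcal R$.

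First I check that $\mathcal R$ is an anti-super-$\mathcal O$-operator with respect to the adjoint representation. Condition \eqref{cond-ant-O-oper-ncomm-Hom-ass1} is the assumption that $\mathcal R$ commutes with $\alpha$. Condition \eqref{cond-ant-O-oper-ncomm-Hom-ass2} becomes
\[
\mu(\mathcal R(x),\mathcal R(y))=-\mathcal R\big(\mu(\mathcal R(x),y)+(-1)^{|x||y|}\mu(\mathcal R(y),x)\big),
\]
and by super-commutativity of $\mu$ this is \eqref{ant-RB-oper-noncomm-ass}. Condition \eqref{cond-ant-O-oper-Hom-Lie2} becomes
\[
[\mathcal R(x),\mathcal R(y)]=\mathcal R\big((-1)^{|x||y|}[\mathcal R(y),x]-[\mathcal R(x),y]\big).
\]
By super-skew-symmetry, $(-1)^{|x||y|}[\mathcal R(y),x]=-[x,\mathcal R(y)]$. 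Multiplying by $-(-1)^{|x||y|}$ and using $[\mathcal R(x),\mathcal R(y)]=-(-1)^{|x||y|}[\mathcal R(y),\mathcal R(x)]$ gives exactly \eqref{cond-Rota-Baxter-Oper}. The induced products $-\mathfrak{ad}(\mathcal R(x))y$ and $-\mathfrak L(\mathcal R(x))y$ are precisely $\circ_\mathcal R$ and $\star_\mathcal R$.

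The remaining and main step is to show that $\mathcal R$ is strong as an anti-super-$\mathcal O$-operator in the sense of the preceding definition (items (1)–(3)) if and only if $\mathcal R$ satisfies \eqref{strong-ant-RB-oper-noncomm-ass}, \eqref{cond-strong-Rota-Baxter-Oper} and \eqref{cond-strong-R-B-Op-Hom-Poiss}. I would translate each condition in turn.

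\textbf{Lie part.} Condition \eqref{cond-strong-O-Oper} with $\rho=\mathfrak{ad}$ and $\beta=\alpha$ reads $\circlearrowleft[[\mathcal R(u),\mathcal R(v)],\alpha(w)]=0$ with the stated signs. This is literally \eqref{cond-strong-Rota-Baxter-Oper}.

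\textbf{Associative part.} Condition \eqref{strong-ant-O-oper-ncomm-Hom-ass} reads
\[
\mu(\mu(\mathcal R u,\mathcal R v),\alpha w)=(-1)^{|u|(|v|+|w|)}\mu(\mu(\mathcal R v,\mathcal R w),\alpha u).
\]
Rewriting the right-hand side via super-commutativity as $\mu(\alpha u,\mu(\mathcal R v,\mathcal R w))$ gives \eqref{strong-ant-RB-oper-noncomm-ass}.

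\textbf{Mixed part.} Condition \eqref{cond-strong-O-op-Hom-Pois-sup} becomes
\[
\mu([\mathcal R u,\mathcal R v],\alpha w)+(-1)^{|v||w|}\mu([\mathcal R u,\mathcal R w],\alpha v)+(-1)^{|u|(|v|+|w|)}[\mu(\mathcal R v,\mathcal R w),\alpha u]=0.
\]
This is where I expect friction: \eqref{cond-strong-R-B-Op-Hom-Poiss} as printed has brackets $[[\cdot,\cdot],\alpha(\cdot)]$ in its first two terms rather than $\mu([\cdot,\cdot],\alpha(\cdot))$. I would first try to reconcile the two using the Hom-Leibniz identity \eqref{Homleibnizident} together with super-commutativity. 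Failing that, I would read \eqref{cond-strong-R-B-Op-Hom-Poiss} as the intended specialization, namely with $\mu$ in the first two terms, which is what the direct substitution produces.

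Once these three translations are established, the preceding theorem yields the stated equivalence immediately.
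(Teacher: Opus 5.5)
Your proposal is correct and is exactly the argument the paper intends (it gives no separate proof of this corollary). The corollary is the preceding theorem specialized to the adjoint representation $(\mathcal A,\mathfrak{ad},\mathfrak L,\alpha)$ with $T=\mathcal R$, and your translations of the anti-super-$\mathcal O$-operator conditions and of the three strongness conditions are accurate.

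Your suspicion about \eqref{cond-strong-R-B-Op-Hom-Poiss} is justified, and no reconciliation through Hom-Leibniz is possible. Its first two terms must read $\mu([\mathcal R(x),\mathcal R(y)],\alpha(z))$ and $\mu([\mathcal R(x),\mathcal R(z)],\alpha(y))$, as your substitution gives. For example, take $\mu=0$, $\alpha=\mathrm{id}$, the Lie algebra $[e_1,e_2]=e_2$, and $\mathcal R(e_1)=-\tfrac12 e_1$, $\mathcal R(e_2)=e_2$. Then $\mathcal R$ is an anti-Rota-Baxter operator and the resulting structure is Hom-anti-pre-Poisson, but the printed condition at $(x,y,z)=(e_1,e_2,e_1)$ equals $\tfrac12 e_2\neq 0$. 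So you can skip the reconciliation attempt.
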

\begin{exa}
Let $(\mathcal A,[\cdot,\cdot],\mu,\alpha)$ be the Hom-Poisson superalgebra defined in Example~\ref{exa-H-Poisson-sup}. Define the even linear map $\mathcal R:\mathcal A\to\mathcal A$
by
\[
\mathcal R(e_1)=0,\quad
\mathcal R(e_2)=e_2,\quad \text{and}\quad
\mathcal R(f)=f.
\]
It is straightforward to verify that $\mathcal R$ is a strong anti-Rota-Baxter operator on $(\mathcal A,[\cdot,\cdot],\mu,\alpha)$. Consequently, Corollary~\ref{H-ant-pre-Poiss-via-ant-R-B} endows $\mathcal A$ with a Hom-anti-pre-Poisson superalgebra structure$
(\mathcal A,\circ_{\mathcal R},\star_{\mathcal R},\alpha)$,
where the operations $\circ_{\mathcal R}$ and $\star_{\mathcal R}$ are defined by
\[
x\circ_{\mathcal R}y=-[\mathcal R(x),y],\qquad
x\star_{\mathcal R}y=-\mu(\mathcal R(x),y),
\qquad \forall\,x,y\in\mathcal A.
\]
The nonzero products are given by
\[
e_2\circ_{\mathcal R}e_1
=-[\mathcal R(e_2),e_1]
=-[e_2,e_1]
=2e_2,\qquad
f\circ_{\mathcal R}e_1
=-[\mathcal R(f),e_1]
=-[f,e_1]
=2f,
\]
and
\[
e_2\star_{\mathcal R}e_1
=-\mu(\mathcal R(e_2),e_1)
=-\mu(e_2,e_1)
=-2e_2,\qquad
f\star_{\mathcal R}e_1
=-\mu(\mathcal R(f),e_1)
=-\mu(f,e_1)
=-2f,
\]
while all the remaining products vanish.

\end{exa}
\begin{prop}
Let $(\mathcal A,[\cdot,\cdot],\mu,\alpha)$ be a multiplicative Hom-Poisson superalgebra. Suppose that $\mathfrak B$ is a nondegenerate
super-symmetric bilinear form on $\mathcal A$ such that it is a super-commutative $2$-cocycle on $(\mathcal A,[\cdot,\cdot],\alpha)$ and a super-commutative Connes cocycle on
$(\mathcal A,\mu,\alpha)$.
Then there is a compatible Hom-anti-pre-Poisson superalgebra $(\mathcal A,\circ,\star,\alpha)$ in which $\star$ and $\circ$ are respectively defined by Eqs. \eqref{eq-sup-comm-con-cocy-to-H-ant-Zinb} and \eqref{inv-Hom-anti-pre-Lie}.

Conversely, let $(\mathcal A,\circ,\star,\alpha)$ be a Hom-anti-pre-Poisson superalgebra and the sub-adjacent
Hom-Poisson superalgebra be $(\mathcal A,[\cdot,\cdot],\mu,\alpha)$.
Then there is a Hom-Poisson superalgebra
$\mathcal A\ltimes_{-\mathfrak L_{\circ}^{\star},\mathfrak L_{\star}^{\star}}\mathcal A^{*}$,
and the natural nondegenerate symmetric bilinear form $\mathfrak B_d$
defined by Eq. \eqref{natur-biln-form-H-assoc-sup} is a super-commutative $2$-cocycle on the Hom-Lie superalgebra
$\mathcal A\ltimes_{-\mathfrak L_{\circ}^{\star}}\mathcal A^{*}$,   
and a supercommutative Connes cocycle on the
super-commutative Hom-associative superalgebra
$\mathcal A\ltimes_{\mathfrak L_{\star}^{\star}}\mathcal A^{*}$.
\end{prop}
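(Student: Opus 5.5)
The plan is to combine Theorem \ref{from-h-ass-to-H-anti-Zinb-via-comm-Connes-coc} and Theorem \ref{ant-h-pr-Lie-from-nondeg-bil} for the two separate structures, and then to check the three mixed compatibility identities \eqref{cond-H-ant-pre-Poiss1}--\eqref{cond-H-ant-pre-Poiss3} by pairing both sides against $\alpha^2(z)$ (or a suitable $\alpha$-power of a test vector) with $\mathfrak B$, using nondegeneracy.

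\emph{Direct part.} Since $\mathfrak B$ is a nondegenerate super-commutative Connes cocycle on the multiplicative $(\mathcal A,\mu,\alpha)$, Theorem \ref{from-h-ass-to-H-anti-Zinb-via-comm-Connes-coc} gives a compatible Hom-anti-Zinbiel product $\star$ satisfying \eqref{eq-sup-comm-con-cocy-to-H-ant-Zinb}, with $\mu=\mu_\star$. Since $\mathfrak B$ is also a nondegenerate super-commutative $2$-cocycle on $(\mathcal A,[\cdot,\cdot],\alpha)$, Theorem \ref{ant-h-pr-Lie-from-nondeg-bil} gives a compatible Hom-anti-pre-Lie product $\circ$ satisfying \eqref{inv-Hom-anti-pre-Lie}, with $[\cdot,\cdot]=[\cdot,\cdot]_\circ$. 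A cleaner route to the mixed identities is operator-theoretic. Let $T:\mathcal A\to\mathcal A^*$ be as in \eqref{eq:HomT}. The proofs of the two theorems show that $T^{-1}$ is simultaneously an anti-super-$\mathcal O$-operator of $(\mathcal A,[\cdot,\cdot],\alpha)$ for $(\mathcal A^*,\mathfrak{ad}^\star,(\alpha^{-1})^*)$ and of $(\mathcal A,\mu,\alpha)$ for $(\mathcal A^*,-\mathfrak L^\star_\mu,(\alpha^{-1})^*)$. By Proposition \ref{Prop:dual-dep-H-Poiss-sup}, $(\mathcal A^*,\mathfrak{ad}^\star,-\mathfrak L^\star_\mu,(\alpha^{-1})^*)$ is a representation of the Hom-Poisson superalgebra. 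Hence $T^{-1}$ is an invertible anti-super-$\mathcal O$-operator on the Hom-Poisson superalgebra.

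By the theorem on anti-pre-Poisson structures from $\mathcal O$-operators, it then remains to show that $T^{-1}$ is strong. Conditions (1) and (2) follow from Proposition \ref{inver-O-oper} and Lemma \ref{inv-O-op-H-ass-aut-strong}. Condition (3), namely \eqref{cond-strong-O-op-Hom-Pois-sup}, is the main obstacle. I would handle it the same way as the invertibility lemmas. Transport to $V=\mathcal A^*$ the products $\circ_T,\star_T$, which, as the proofs show, coincide under $T$ with $\circ$ and $\star$. Then show that for invertible $T$ the expression in \eqref{cond-strong-O-op-Hom-Pois-sup}, after rewriting $\rho(\mu(T u,T v))$ and $\eta([Tu,Tv])$ via the representation axioms \eqref{cond-rep-com-Hom-Pois1} and \eqref{cond-rep-noncom-Hom-Pois2} and the $\mathcal O$-operator identities, reduces to $T^{-1}$ applied to the Hom-Leibniz identity \eqref{Homleibnizident} among $T(u),T(v),T(w)$. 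It therefore vanishes.

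If this reduction gets messy, the fallback is to verify \eqref{cond-H-ant-pre-Poiss3} directly. Pair its left-hand and right-hand sides with $\alpha^2(t)$ via $\mathfrak B$, move every product to the other slot using \eqref{eq-sup-comm-con-cocy-to-H-ant-Zinb} and \eqref{inv-Hom-anti-pre-Lie} together with multiplicativity, and the difference becomes $\mathfrak B$ of $\alpha$-twisted elements against the Hom-Leibniz defect. The identities \eqref{cond-H-ant-pre-Poiss1} and \eqref{cond-H-ant-pre-Poiss2} come out of the proof of that theorem automatically once the $\mathcal O$-operator property holds.

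\emph{Converse.} Given $(\mathcal A,\circ,\star,\alpha)$, the sub-adjacent structures $(\mathcal A,[\cdot,\cdot]_\circ,\alpha)$ and $(\mathcal A,\mu_\star,\alpha)$ form a Hom-Poisson superalgebra. This is the expected sub-adjacent property: the Hom-Leibniz identity follows by expanding and using \eqref{cond-H-ant-pre-Poiss1}--\eqref{cond-H-ant-pre-Poiss3}. In addition, $(\mathcal A,-\mathfrak L_\circ,-\mathfrak L_\star,\alpha)$ is a representation of it. The Lie and associative parts come from the equivalent characterizations of Hom-anti-pre-Lie and Hom-anti-Zinbiel superalgebras (Proposition \ref{equiv-H-ant-Z-H-ass-rep}), and the mixed conditions \eqref{cond-rep-com-Hom-Pois1} and \eqref{cond-rep-noncom-Hom-Pois2} are precisely \eqref{cond-H-ant-pre-Poiss1} and \eqref{cond-H-ant-pre-Poiss2}. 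Dualizing by Proposition \ref{Prop:dual-dep-H-Poiss-sup} yields the semidirect Hom-Poisson superalgebra $\mathcal A\ltimes\mathcal A^*$. The cocycle statements for $\mathfrak B_d$ are then the converse parts of Theorem \ref{from-h-ass-to-H-anti-Zinb-via-comm-Connes-coc} and Theorem \ref{ant-h-pr-Lie-from-nondeg-bil}, applied componentwise.
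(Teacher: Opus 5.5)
Your proposal is correct, and for the direct part it takes a genuinely different route from the paper.

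\textbf{The paper's route.} The paper never uses strongness or the $\mathcal O$-operator theorem in this proof. After obtaining $\circ$ and $\star$ from Theorems \ref{from-h-ass-to-H-anti-Zinb-via-comm-Connes-coc} and \ref{ant-h-pr-Lie-from-nondeg-bil}, it proceeds as follows:
\begin{enumerate}
\item[(a)] it pairs $[x,y]_\circ\star\alpha(z)$ against $\alpha(t)$ under $\mathfrak B$;
\item[(b)] it moves products across the form with \eqref{mult-inv-Hom-anti-pre-Lie} and \eqref{eq-mult-sup-comm-con-cocy-to-H-ant-Zinb};
\item[(c)] it inserts \eqref{Homleibnizident} in the middle;
\item[(d)] it concludes \eqref{cond-H-ant-pre-Poiss1} from nondegeneracy and bijectivity of $\alpha$, and declares the other two identities ``similar''.
\end{enumerate}
That is essentially your fallback.

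\textbf{Your route.} You isolate a reusable fact: an invertible anti-super-$\mathcal O$-operator $S:V\to\mathcal A$ on a Hom-Poisson superalgebra is automatically strong. This is the Hom-Poisson analogue of Lemma \ref{inv-O-op-H-ass-aut-strong} and Proposition \ref{inver-O-oper}. With it, \eqref{cond-H-ant-pre-Poiss1} and \eqref{cond-H-ant-pre-Poiss2} come for free from the representation axioms of $(\mathcal A^*,\mathfrak{ad}^\star,-\mathfrak L^\star_\mu,(\alpha^{-1})^*)$, and Hom-Leibniz is used directly only for the third identity.

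The reduction you leave as a sketch does go through. Write $a=S(u)$, $b=S(v)$, $c=S(w)$, and transport the products to $\mathcal A$ by $x\circ y=-S\rho(x)S^{-1}y$ and $x\star y=-S\eta(x)S^{-1}y$. Let $E$ be the left side of \eqref{cond-strong-O-op-Hom-Pois-sup}. The $\mathcal O$-operator identities, $S\beta=\alpha S$, and the transported \eqref{cond-H-ant-pre-Poiss1}, \eqref{cond-H-ant-pre-Poiss2} give
\[
S(E)=\alpha(a)\circ\mu(b,c)-(-1)^{|a||b|}\alpha(b)\star[a,c]-(-1)^{(|a|+|b|)|c|}\alpha(c)\star[a,b].
\]
Expanding $[\alpha(a),\mu(b,c)]-\mu([a,b],\alpha(c))-(-1)^{|a||b|}\mu(\alpha(b),[a,c])$ with the same two identities yields exactly $2S(E)$, so $E=0$ by Hom-Leibniz.

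\textbf{What each approach buys.} Your route gives a general criterion, in the spirit of Theorem \ref{comp-ant-dend-iif-inver-O-op}. The price is that it leans on Proposition \ref{Prop:dual-dep-H-Poiss-sup} (stated without proof) and on the $\mathcal O$-operator theorem. The paper's route stays inside bilinear-form manipulations, but it has to redo a computation for each of the three identities.

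\textbf{Two caveats, both inherited from the paper.}
\begin{enumerate}
\item[(i)] The argument tacitly needs $\alpha$ invertible and $\mathfrak B(\alpha(x),\alpha(y))=\mathfrak B(x,y)$. The latter is exactly equivalent to $T^{-1}\circ(\alpha^{-1})^*=\alpha\circ T^{-1}$, which you need for $T^{-1}$ to be an $\mathcal O$-operator. The paper uses it too, via \eqref{mult-inv-Hom-anti-pre-Lie} and \eqref{eq-mult-sup-comm-con-cocy-to-H-ant-Zinb}.
\item[(ii)] In the converse, the dual of $(\mathcal A,-\mathfrak L_\star,\alpha)$ is $(\mathcal A^*,\mathfrak L_\star^\star,(\alpha^{-1})^*)$. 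So when you cite the converse of Theorem \ref{from-h-ass-to-H-anti-Zinb-via-comm-Connes-coc} ``componentwise'', read it with $\mathfrak L_\star^\star$, as in the statement, not the printed $-\mathfrak L_\star^\star$. $\mathfrak B_d$ is a Connes cocycle only for the former.
\end{enumerate}
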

\begin{proof}
By Theorems \ref{from-h-ass-to-H-anti-Zinb-via-comm-Connes-coc} and \ref{ant-h-pr-Lie-from-nondeg-bil}, it follows that there exist a Hom-anti-pre-Lie superalgebra $(\mathcal A,\circ,\alpha)$ compatible with the Hom-Lie superalgebra $(\mathcal A,[\cdot,\cdot],\alpha)$, and a Hom-anti-Zinbiel superalgebra $(\mathcal A,\star,\alpha)$ compatible with the Hom-associative superalgebra $(\mathcal A,\cdot,\alpha)$. The operations $\circ$ and $\star$ are given by \eqref{inv-Hom-anti-pre-Lie} and \eqref{eq-sup-comm-con-cocy-to-H-ant-Zinb}, respectively.\\
For any $x,y,z,t\in\mathcal H(\mathcal A)$, we have:
\begin{align*}
\mathfrak B((x\circ y-(-1)^{|x||y|}y\circ x)\star \alpha(z),\alpha(t))&=\mathfrak B((\alpha(x)\circ\alpha(y)-(-1)^{|x||y|}\alpha(y)\circ\alpha(x))\star \alpha^2(z),\alpha^2(t))
\\&= \mathfrak B([\alpha(x),\alpha(y)]\star \alpha^2(z),\alpha^2(t))\\&\overset{(\ref{eq-mult-sup-comm-con-cocy-to-H-ant-Zinb})}{=}-(-1)^{|z|(|x|+|y|)}\mathfrak B(\alpha^3(z),\mu([\alpha(x),\alpha(y)],\alpha(t)))\\&\overset{(\ref{Homleibnizident})}{=}-(-1)^{|z|(|x|+|y|)}\mathfrak B(\alpha^3(z),[\alpha^2(x),\mu(\alpha(y),t)])\\&+(-1)^{|x|(|y|+|z|)}\mathfrak B(\alpha^3(z),\mu(\alpha^2(y),[\alpha(x),t]))\\&\overset{(\ref{mult-inv-Hom-anti-pre-Lie}),(\ref{eq-mult-sup-comm-con-cocy-to-H-ant-Zinb})}{=}-(-1)^{|y||z|}\mathfrak B(\alpha^2(x)\circ\alpha^2(z),\mu(\alpha^2(y),\alpha(t)))\\&-(-1)^{|x|(|y|+|z|)}\mathfrak B(\alpha^2(y)\star\alpha^2(z),[\alpha^2(x),\alpha(t)])\\&\overset{(\ref{eq-mult-sup-comm-con-cocy-to-H-ant-Zinb}),(\ref{mult-inv-Hom-anti-pre-Lie})}{=}(-1)^{|x||y|}\mathfrak B(\alpha^2(y)\star(\alpha(x)\circ\alpha(z)),\alpha^2(t))\\&-\mathfrak B(\alpha^2(x)\circ(\alpha(y)\star\alpha(z)),\alpha^2(t))\\&=(-1)^{|x||y|}\mathfrak B(\alpha(y)\star(x\circ z),\alpha(t))\\&-\mathfrak B(\alpha(x)\circ(y\star z),\alpha(t))\\&=(-1)^{|x||y|}\mathfrak B(\alpha(y)\star(x\circ z)-\alpha(x)\circ(y\star z),\alpha(t)).
\end{align*}
By the fact that, $\alpha$ is bijective and $\mathfrak B$ is nondegenerate, we can conclude that
$$(x\circ y-(-1)^{|x||y|}y\circ x)\star \alpha(z)
=(-1)^{|x||y|}\alpha(y)\star(x\circ z)-\alpha(x)\circ(y\star z),\;\forall x,y,z\in\mathcal H(\mathcal A),$$
therefore Eq. \eqref{cond-H-ant-pre-Poiss1} is satisfied. Similarly, we can show that Eqs. \eqref{cond-H-ant-pre-Poiss1} and \eqref{cond-H-ant-pre-Poiss1} are satisfied.\\
Conversely, let
$(\mathcal A,\circ,\star,\alpha)$
be a Hom-anti-pre-Poisson superalgebra and let
$(\mathcal A,[\cdot,\cdot],\mu,\alpha)$
be its sub-adjacent Hom-Poisson superalgebra. Then $(\mathcal A,-\mathfrak L_\circ,-\mathfrak L_\star,\alpha)$ is a representation of $(\mathcal A,[\cdot,\cdot],\mu,\alpha)$. By Proposition \ref{Prop:dual-dep-H-Poiss-sup}, $(\mathcal A,-\mathfrak L_\circ^\star,\mathfrak L_\star^\star,(\alpha^{-1})^*)$ is also representation of $(\mathcal A,[\cdot,\cdot],\mu,\alpha)$. It is straightforward to show that $\mathfrak B_d$ is a super-commutative $2$-cocycle on the Hom-Lie superalgebra
$\mathcal A\ltimes_{-\mathfrak L_{\circ}^{\star}}\mathcal A^{*}$,   
and a supercommutative Connes cocycle on the
super-commutative Hom-associative superalgebra
$\mathcal A\ltimes_{\mathfrak L_{\star}^{\star}}\mathcal A^{*}$. This completes the proof.
\end{proof}


\end{document}